\documentclass[a4paper,11pt]{article}
\usepackage{amsfonts}
\usepackage{pifont}
\usepackage[title]{appendix}
\usepackage{caption}
\usepackage{graphicx, subfig}
\usepackage{bm}
\usepackage{latexsym,amsmath,amssymb,cite,amsthm}
\usepackage{color,eucal,enumerate,mathrsfs}
\usepackage{microtype}
\usepackage[normalem]{ulem}
\usepackage[pagewise]{lineno}
\usepackage[pagebackref=true, colorlinks, linkcolor=blue, anchorcolor=red,citecolor=blue]{hyperref}
\allowdisplaybreaks

\numberwithin{equation}{section}
\theoremstyle{plain}
\newtheorem{theorem}{Theorem}[section]
\newtheorem{corollary}[theorem]{Corollary}
\newtheorem{lemma}[theorem]{Lemma}
\newtheorem{proposition}[theorem]{Proposition}
\theoremstyle{definition}

\newtheorem{example}[theorem]{Example}

\theoremstyle{remark}

\newtheorem{remark}[theorem]{Remark}
\newtheorem*{proposition*}{Proposition}

\newcommand{\spt}{\operatorname{supp}}
\newcommand{\Lip}{\operatorname{Lip}}
\newcommand{\Var}{\operatorname{Var}}
\newcommand{\diam}{\operatorname{diam}}
\newcommand{\N}{\mathbb{N}}
\newcommand{\R}{\mathbb{R}}
\newcommand{\Id}{\operatorname{Id}}
\newcommand{\dist}{\operatorname{dist}}
\renewcommand{\d}{{\mathrm d}}
\newcommand{\restr}[1]{\lower3pt\hbox{$|_{#1}$}}

\title{\textbf{Two-mode stability for multi-marginal optimal transport maps}}
	\author{Bang-Xian Han\thanks{School of Mathematics, Shandong University, Jinan, China. Email: hanbx@sdu.edu.cn. }
		\and Zhuo-Nan Zhu
		\thanks{School of Mathematical Sciences, University of Science and Technology of China, Hefei, China. Email: zhuonanzhu@mail.ustc.edu.cn}
	}
\date{\today}
\begin{document}

		\maketitle

\begin{abstract}
We establish a two-mode stability theory for Monge solutions of the multi-marginal optimal transport problem with barycentric quadratic cost. The associated tuple of maps splits into an external barycentric mode and internal relative modes. A quadratic lower bound for the Kantorovich defect controls the internal modes and yields a square-root estimate without invoking any two-marginal map-stability theorem. The external mode is the optimal transport map from the fixed source to the Wasserstein barycenter. Combining the resulting two-mode estimate with M\'erigot's sharp theorem gives a $\frac{1}{4}$-H\"older estimate for general perturbations, while barycenter-preserving perturbations satisfy a $\frac{1}{2}$-H\"older estimate. We prove that both exponents and the dependence on the weights are optimal. We also examine the scope of such decomposition beyond the barycentric cost: collective-coordinate perturbations and uniformly concave costs of the sum retain the two-mode estimate, whereas the analyses of graph interactions, hedonic costs, and translation-invariant costs identify the two possible obstructions---loss of relative coercivity and lack of stability for the remaining external modes.
\end{abstract}

	\textbf{Keywords}: multi-marginal optimal transport, optimal transport map, quantitative stability, Kantorovich potential, Wasserstein barycenter

\textbf{MSC 2020}: Primary 49Q22; Secondary 49N15, 28A33, 60B05

		\tableofcontents

\section{Introduction}\label{sec:introduction}

\paragraph{Classical roots and the stability gap.}
Optimal transport, initiated by Monge \cite{monge1781} and relaxed by Kantorovich \cite{zbMATH03099866}, provides a variational way to compare probability measures. In the Euclidean quadratic case, given two probability measures $\rho,\mu\in \mathcal P(\mathbb R^n)$, the Monge problem asks for a map $T:\mathbb R^n\to\mathbb R^n$ with $T_\#\rho=\mu$ minimizing
\begin{equation*}
    \int_{\mathbb R^n}\frac12 |x-T(x)|^2\,\d\rho(x)
\end{equation*}
among all transport maps from $\rho$ to $\mu$. Such a minimizer is called an optimal transport map. Brenier's landmark theorem \cite{zbMATH00011193} states that, when the source measure is absolutely continuous, this problem has a unique solution and the optimal transport map is the gradient of a convex function.

The multi-marginal analogue was developed by Gangbo and \'Swi\c{e}ch \cite{zbMATH01060715}. Their theorem gives existence and uniqueness of Monge solutions for the barycentric quadratic cost considered below, and it links this problem with Wasserstein barycenters. These existence theorems leave a quantitative question open: how stable are the maps under perturbations of the marginals? In the two-marginal case, if $T_\mu$ and $T_{\tilde \mu}$ denote the quadratic optimal transport maps from a fixed source $\rho$ to two targets $\mu$ and $\tilde\mu$, respectively, one seeks estimates of the form
\begin{equation*}
    \|T_\mu-T_{\tilde \mu}\|_{L^2(\rho)}\leq  C W_p^\alpha(\mu,\tilde\mu),
\end{equation*}
for suitable constants $C,\alpha>0$ and $p\geq 1$, usually for compactly supported targets. Existing stability results are typically established for $p=1$, while stability for other values of $p$ is deduced from the inequality $W_1\leq W_p$. An early stability result of this form is due to Berman \cite{zbMATH07404754}, who obtained a dimension-dependent exponent under density and geometric assumptions on the source measure. This line of work was later improved by Delalande and M\'erigot \cite{zbMATH07794624} with the uniform exponent $\frac{1}{6}$, and further extended by Letrouit and M\'erigot \cite{arXiv:2411.04908} using the gluing method to more general source geometries and density profiles. Two further works improved these estimates under bounded-density and compact-support assumptions on the source measure: Cazelles, Pauwels and Portales \cite{cazelles2026statisticalestimationmongetransport} obtained the exponent $\frac{1}{5}$ in the context of statistical estimation
of Monge maps, while M\'erigot \cite{merigot:hal-05616391} proved the optimal exponent $\frac{1}{4}$. In dimension $n\geq 2$, this exponent is optimal; see also \cite{letrouit2026unstable} for sources of instability. By contrast, quantitative stability for multi-marginal Monge maps is much less developed, although such maps appear in Wasserstein barycenters, density functional theory, and economic matching.

\medskip

\paragraph{Multi-marginal optimal transport.}
We consider a barycentric multi-marginal analogue of the preceding stability problem. Fix $N\ge 3$ and weights
\begin{equation*}
    \lambda_i>0,
    \quad
    \sum_{i=1}^N \lambda_i=1.
\end{equation*}
Let $S,\, Y\subseteq \mathbb R^n$ be compact, let $\rho\in \mathcal P(S)$, and let $\mu_2,\dots,\mu_N\in\mathcal P(Y)$. In the main part of this paper we study the cost
\begin{equation}\label{cost}
    c(x_1,\dots,x_N):=\inf_{y\in\mathbb R^n}\sum_{i=1}^N \frac{\lambda_i}{2}|x_i-y|^2.
\end{equation}
This weighted quadratic cost appears in the multi-marginal theory of Gangbo and \'Swi\c{e}ch \cite{zbMATH01060715} and in the study of the Wasserstein barycenter problem. We call it the barycentric quadratic cost.

For the barycentric quadratic cost \eqref{cost}, the Monge problem is to solve
\begin{equation}\label{eq:monge-problem}
    \inf_{T_2,\dots,T_N}\int_S c(x_1,T_2(x_1),\dots,T_N(x_1))\,\d\rho(x_1),
\end{equation}
among maps $T_i:S\to Y$ satisfying $(T_i)_\#\rho=\mu_i$, $i=2,\dots,N$. Its Kantorovich relaxation is
\begin{equation}\label{eq:kantorovich-relaxation}
    \inf_{\pi\in \Pi(\rho,\mu_2,\dots,\mu_N)}\int_{S\times Y^{N-1}}c(x_1,\dots,x_N)\,\d\pi(x_1,\dots,x_N),
\end{equation}
where $\Pi(\rho,\mu_2,\dots,\mu_N)$ is the set of probability measures with the prescribed marginals. The minimizer in \eqref{cost} is the Euclidean weighted barycenter $\sum_{i=1}^N\lambda_i x_i$, and hence
\begin{equation}\label{equal}
    c(x_1,\dots,x_N)=\frac12\sum_{i=1}^N\lambda_i|x_i|^2-
    \frac12\left|\sum_{i=1}^N\lambda_i x_i\right|^2
    =\frac12\sum_{1\le i<j\le N}\lambda_i\lambda_j|x_i-x_j|^2.
\end{equation}

Gangbo and \'Swi\c{e}ch's theorem \cite{zbMATH01060715} (see also Kim and Pass \cite{zbMATH06484002}) extends Brenier's result to this multi-marginal setting: if $\rho$ is absolutely continuous, then the Monge problem has a unique solution $(T_2,\dots,T_N)$. We call these maps the multi-marginal optimal transport maps.

Multi-marginal optimal transport extends the two-marginal theory and appears in several contexts, including Wasserstein barycenters, statistics and data analysis, multi-agent matching problems in economics, and density functional theory. We refer to Pass \cite{pass2015multi} for a survey, and to \cite{zbMATH05956494,chiappori2010hedonic,pass2014matching,zbMATH07098220,arXiv:2509.22494} for further discussions.

\medskip

\paragraph{The two-mode decomposition and Wasserstein barycenters.}
Let
\begin{equation*}
    \Omega:=\overline{\operatorname{conv}(S\cup Y)}\subseteq\mathbb R^n,
    \quad
    \mathbb P:=\lambda_1\delta_\rho+\sum_{i=2}^N\lambda_i\delta_{\mu_i}\in \mathcal P(\mathcal P(\Omega)).
\end{equation*}
The Wasserstein barycenter of $\mathbb P$ is a minimizer of
\begin{equation*}
    \operatorname{Var}_{\mathbb P}(\nu):=\frac{\lambda_1}{2}W_2^2(\nu,\rho)+\sum_{i=2}^N\frac{\lambda_i}{2}W_2^2(\nu,\mu_i),
    \quad \nu\in\mathcal P_2(\mathbb R^n).
\end{equation*}
The following proposition links the Wasserstein barycenter to the barycentric multi-marginal optimal transport problem (see \cite{zbMATH01060715}, \cite[Section 4]{zbMATH05956494}, and \cite[Proposition 5.1]{zbMATH06484002}).

\begin{proposition}[Barycenter map]\label{prop:barycentermap}
Let $\mathbb P=\lambda_1\delta_\rho+\sum_{i=2}^{N}\lambda_i\delta_{\mu_i}\in \mathcal P(\mathcal P(\Omega))$ and assume $\rho\ll \mathcal L^n$. Then there exists a unique Wasserstein barycenter of $\mathbb P$, denoted by $\mu_{\mathbb P}\in\mathcal P(\Omega)$. Moreover, if $(T_2,\dots,T_N)$ are the multi-marginal optimal transport maps, then
\begin{equation*}
    B:=\lambda_1\Id+\sum_{i=2}^N\lambda_iT_i
\end{equation*}
is the unique optimal transport map from $\rho$ to $\mu_{\mathbb P}$, for the quadratic cost. We call $B$ the barycenter map.
\end{proposition}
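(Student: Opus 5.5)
The plan is to compare two variational problems, the barycenter problem and the multi-marginal problem, through the map $\nu\mapsto$ couplings.

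\textbf{Lower bound for the variance.} Let $\nu\in\mathcal P_2(\mathbb R^n)$. Take optimal couplings $\gamma_1\in\Pi(\nu,\rho)$ and $\gamma_i\in\Pi(\nu,\mu_i)$ for $i\ge 2$, and glue them along the common marginal $\nu$. This gives a measure $\eta$ on $\mathbb R^n\times S\times Y^{N-1}$. Let $\pi$ be its projection onto the last $N$ factors, so $\pi\in\Pi(\rho,\mu_2,\dots,\mu_N)$. By the definition \eqref{cost} of $c$ as an infimum over $y$,
\[
\operatorname{Var}_{\mathbb P}(\nu)=\int\sum_i\tfrac{\lambda_i}{2}|x_i-y|^2\,\d\eta\ \ge\ \int c\,\d\pi\ \ge\ \mathrm{MM},
\]
where $\mathrm{MM}$ denotes the value of \eqref{eq:kantorovich-relaxation}.

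\textbf{Upper bound and existence.} Since $\rho\ll\mathcal L^n$, the Gangbo--\'Swi\c{e}ch theorem (together with the standard equality between the Monge and Kantorovich values) gives an optimal plan $\pi^*=(\Id,T_2,\dots,T_N)_\#\rho$. Set $\nu^*=B_\#\rho$. Using the coupling $(\Id,B)_\#\rho$ between $\rho$ and $\nu^*$, and $(T_i,B)_\#\rho$ between $\mu_i$ and $\nu^*$, together with the fact that $B(x)=\sum\lambda_i x_i$ is the minimizer in \eqref{cost}, we get
\[
\operatorname{Var}_{\mathbb P}(\nu^*)\le\int c(x,T_2x,\dots,T_Nx)\,\d\rho=\mathrm{MM}.
\]
Hence $\nu^*$ is a barycenter, $\min\operatorname{Var}_{\mathbb P}=\mathrm{MM}$, and every inequality above is an equality for $\nu^*$. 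In particular $(\Id,B)_\#\rho$ is an optimal coupling between $\rho$ and $\nu^*$. Optimality of $B$ then follows. Alternatively, $B(x)=\nabla\big(\lambda_1|x|^2/2+\sum_i\lambda_i\varphi_i\big)$, and by the Gangbo--\'Swi\c{e}ch structure (each $T_i$ is a gradient of a convex function composed appropriately) this is a gradient of a convex function. Equality in the coupling is simplest, though, and the Brenier uniqueness theorem then identifies $B$ as the unique optimal map. Finally, $\nu^*\in\mathcal P(\Omega)$ because $B(x)$ is a convex combination of points of $S\cup Y$.

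\textbf{Uniqueness; the main obstacle.} Let $\nu$ be any barycenter. Then equality holds in the lower-bound chain, so $\pi$ is optimal, and therefore $\pi=\pi^*$ by Gangbo--\'Swi\c{e}ch uniqueness. Equality $\int\sum\frac{\lambda_i}{2}|x_i-y|^2\,\d\eta=\int c\,\d\pi$ forces $y=\sum_i\lambda_ix_i$ for $\eta$-a.e. point, because the minimizer in \eqref{cost} is unique (strict convexity in $y$). Thus $\nu$ is the pushforward of $\pi^*$ under $(x_i)\mapsto\sum\lambda_ix_i$, which is exactly $B_\#\rho=\nu^*$. The delicate point is justifying this equality case. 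It needs the gluing lemma, finiteness of all second moments (automatic here by compactness, and any barycenter has finite second moment since its variance is finite), and the a.e. identification of $y$. I expect this to be the step requiring care.
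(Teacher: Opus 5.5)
Your argument is correct and is essentially the standard comparison between the barycenter problem and the multi-marginal problem, with uniqueness coming from the Gangbo--\'Swi\c{e}ch uniqueness of the optimal plan. The paper does not prove this proposition itself; it invokes exactly this argument by citing Agueh--Carlier (Section 4) and Kim--Pass (Proposition 5.1). The only flaw is your aside $B=\nabla\big(\lambda_1|x|^2/2+\sum_i\lambda_i\varphi_i\big)$, which is not correct as written: first-order optimality in $x_1$ gives $B=\Id-\lambda_1^{-1}\nabla\varphi_1$ $\rho$-a.e. Your main line, via equality in the coupling and Brenier's theorem, does not use it.
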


This characterization reveals the two-mode structure of the stability problem. The barycenter map $B$ records the averaged displacement and forms the external mode. Each individual map also contains a relative component:
\[
    V_i:=T_i-M,\qquad
    M:=\frac{B-\lambda_1\Id}{1-\lambda_1},
    \qquad
    \sum_{i=2}^N\lambda_iV_i=0.
\]
The variables $(V_i)_{i\geq2}$ form the internal modes: they describe the relative positions of the target points inside a fiber of the barycenter map. This is the analogue of the center-of-mass and relative-coordinate decomposition in classical mechanics.

The two components have different stability mechanisms. Once the external coordinate is fixed, the Kantorovich defect is quadratically coercive in the internal directions. The external coordinate, by contrast, is itself a two-marginal optimal transport map from $\rho$ to the Wasserstein barycenter. The stability problem therefore separates into a genuinely multi-marginal coercivity estimate on the fibers of the barycenter map and a two-marginal stability estimate for the barycenter map itself.

\medskip

\paragraph{Main results.}
We write $\mathcal P_{\mathrm{ac}}(S)$ for the set of probability measures on $S$ that are absolutely continuous with respect to the Lebesgue measure $\mathcal L^n$. Given another tuple of target marginals $\tilde\mu_2,\dots,\tilde\mu_N\in\mathcal P(Y)$, let $(\tilde T_2,\dots,\tilde T_N)$ be the corresponding multi-marginal optimal transport maps and set
\begin{equation*}
    \tilde B:=\lambda_1\Id+\sum_{i=2}^N\lambda_i\tilde T_i,
    \quad
    \tilde{\mathbb P}:=\lambda_1\delta_\rho+\sum_{i=2}^N\lambda_i\delta_{\tilde\mu_i}.
\end{equation*}

Our proof follows this separation literally. We first obtain a multi-marginal estimate for the internal modes from a quadratic lower bound for the Kantorovich defect. This step does not use any stability theorem for two-marginal optimal transport maps. Only afterwards do we control the external mode by identifying it with the transport map to the Wasserstein barycenter and applying M\'erigot's sharp estimate.

The different stability exponents therefore have different origins. General perturbations inherit the $\frac{1}{4}$ exponent from the external barycenter mode. Barycenter-preserving perturbations and self-comparisons remove that mode and retain the $\frac{1}{2}$ exponent produced by internal quadratic coercivity. The examples in \S\ref{sec:sharpness} show that both exponents, as well as the dependence on the weights, are optimal.

\medskip

Our first result gives a two-mode estimate for the stability of the maps. It does not use any two-marginal stability result.
\begin{theorem}\label{main}
Let $N\geq 3$ and let $S,\, Y\subseteq \R^n$ be compact. Let $\rho\in\mathcal P_{\mathrm{ac}}(S)$. Then there exists a constant $C>0$, depending on $N$ and $\diam(S\cup Y)$, such that for any $\{\mu_i\}_{i=2}^N, \{\tilde{\mu}_i\}_{i=2}^N\subseteq \mathcal{P}(Y)$,
\begin{equation*}
    \sum_{i=2}^N\lambda_i\|T_i-\tilde T_i\|_{L^2(\rho)}
    \leq C\left(\left(\sum_{i=2}^N \lambda_iW_1(\mu_i,\tilde\mu_i)\right)^{1/2}+\|B-\tilde B\|_{L^2(\rho)}\right).
\end{equation*}
Here $(T_2,\dots,T_N)$ and $(\tilde T_2,\dots,\tilde T_N)$ are the multi-marginal optimal transport maps associated with $(\rho,\mu_2,\dots,\mu_N)$ and $(\rho,\tilde\mu_2,\dots,\tilde\mu_N)$, respectively.
\end{theorem}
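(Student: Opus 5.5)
We compare the two optimal plans through the Kantorovich duality of the multi-marginal problem. Let $(u_1,\dots,u_N)$ be Kantorovich potentials for $(\rho,\mu_2,\dots,\mu_N)$, chosen $c$-conjugate and hence Lipschitz on $\Omega$ with constant bounded in terms of $\diam(S\cup Y)$ (the cost is Lipschitz on $\Omega^N$ with such a constant, and the $\lambda_i\le 1$). They satisfy $\sum_i u_i(x_i)\le c(x_1,\dots,x_N)$ everywhere, with equality $\rho$-a.e.\ along the graph $(x,T_2(x),\dots,T_N(x))$. Define the defect
\[
D(x_1,\dots,x_N):=c(x_1,\dots,x_N)-\sum_{i=1}^N u_i(x_i)\ \ge 0 .
\]
Integrating $D$ along the graph of $\tilde T$ and using the marginal constraints gives
\[
\int D(x,\tilde T_2,\dots,\tilde T_N)\,\d\rho=\tilde{\mathcal C}-\mathcal C-\sum_{i\ge2}\int u_i\,\d(\tilde\mu_i-\mu_i),
\]
where $\mathcal C,\tilde{\mathcal C}$ are the optimal costs. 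Swapping the roles of the two problems and adding, the cost terms cancel, and each potential term is bounded by $\Lip(u_i)W_1(\mu_i,\tilde\mu_i)$ via Kantorovich--Rubinstein. It is convenient to absorb the weights into the potentials: the $c$-concavity bound gives $\Lip(u_i)\lesssim\lambda_i\diam$, because $\nabla_{x_i}c=\lambda_i(x_i-\sum_j\lambda_jx_j)$. This yields
\[
\int D\,\d\rho+\int\tilde D\,\d\rho\le C\sum_{i\ge2}\lambda_iW_1(\mu_i,\tilde\mu_i),
\]
where $D$ is evaluated at $\tilde T$ and $\tilde D$ at $T$.

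The main step, and the one I expect to be the obstacle, is a quadratic lower bound for $D$ in the internal directions. Fix $x$ and a competitor $z=(z_2,\dots,z_N)$ whose barycenter $\lambda_1x+\sum\lambda_iz_i$ is prescribed. By \eqref{equal},
\[
c(x,z)=\tfrac12\sum_i\lambda_i|z_i|^2+\tfrac{\lambda_1}{2}|x|^2-\tfrac12|b|^2 ,
\]
which is strictly convex in $z$ with modulus $\lambda_i$ in each slot once $b$ is fixed. On the other hand, $z_i\mapsto \tfrac{\lambda_i}{2}|z_i|^2-u_i(z_i)$ is convex for $c$-concave $u_i$, since $u_i$ is an infimum of functions of the form $\tfrac{\lambda_i}2|z_i|^2-\lambda_i z_i\cdot(\cdots)+\text{const}$. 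This convexity alone does not give strict coercivity. I would therefore argue via the $c$-cyclical/fiber structure. At the optimal point, $T(x)$ minimizes $D(x,\cdot)$ under the barycenter constraint. Writing $D(x,z)=\tfrac12\sum_i\lambda_i|z_i-a_i(x,b)|^2$ plus a remainder that is convex in $z$, one gets
\[
D(x,z)\ge D(x,\hat z)+\tfrac{1}{2}\sum_i\lambda_i|V_i(z)-V_i(T(x))|^2-C|b(z)-B(x)|\cdot(\ldots).
\]
The first-order error term from moving the barycenter is controlled by the Lipschitz bounds times $|B-\tilde B|$. If the clean convexity decomposition fails, I would instead use the semiconcavity of $u_i$ relative to $\tfrac{\lambda_i}{2}|\cdot|^2$ at the points $T_i(x)$, where supergradients exist, to get the quadratic bound directly. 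The expected outcome is a pointwise inequality of the form
\[
D(x,\tilde T)+\tilde D(x,T)\ge c_0\sum_i\lambda_i|\tilde V_i-V_i|^2-C|B-\tilde B|^2 ,
\]
with the cross term absorbed by Young's inequality.

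Integrating this pointwise bound, we get
\[
\sum\lambda_i\|V_i-\tilde V_i\|^2_{L^2(\rho)}\le C\Bigl(\sum\lambda_iW_1(\mu_i,\tilde\mu_i)+\|B-\tilde B\|^2_{L^2(\rho)}\Bigr).
\]
Finally we write $T_i-\tilde T_i=(V_i-\tilde V_i)+(M-\tilde M)$, where $M-\tilde M=(B-\tilde B)/(1-\lambda_1)$. Then
\[
\sum\lambda_i\|M-\tilde M\|=\|B-\tilde B\| .
\]
Cauchy--Schwarz gives $\sum\lambda_i\|V_i-\tilde V_i\|\le(\sum\lambda_i\|V_i-\tilde V_i\|^2)^{1/2}$ because $\sum_{i\ge2}\lambda_i\le1$, and taking square roots gives the statement of Theorem~\ref{main}.
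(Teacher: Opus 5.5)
Your outer framework works. The $c$-conjugate potentials are $\lambda_i\diam(S\cup Y)$-Lipschitz on $Y$, so Kantorovich--Rubinstein gives $\int D(x,\tilde T(x))\,\d\rho\le 2\diam(S\cup Y)\,\delta$ with $\delta=\sum_{i\ge2}\lambda_iW_1(\mu_i,\tilde\mu_i)$. Evaluating a pointwise defect bound directly at $z=\tilde T(x)$ would even let you skip the paper's gluing coupling and triangle inequality.

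The genuine gap is the pointwise coercivity. You flag it as the obstacle but do not establish it, and the routes you sketch cannot establish it. Write $p_i=\nabla_{x_i}c(x,T(x))$ and $\Delta z_i=z_i-T_i(x)$. With the modulus $\lambda_i$ you attribute to $u_i$, you get $u_i(z_i)\le u_i(T_i(x))+\langle p_i,\Delta z_i\rangle+\frac{\lambda_i}{2}|\Delta z_i|^2$. Combine this with the exact expansion $c(x,z)-c(x,T(x))=\sum_{i\ge2}\langle p_i,\Delta z_i\rangle+\frac{\lambda_1}{2(1-\lambda_1)}|\Delta B|^2+\frac12\sum_{i\ge2}\lambda_i|\Delta V_i|^2$ and the identity $\sum_{i\ge2}\lambda_i|\Delta z_i|^2=\sum_{i\ge2}\lambda_i|\Delta V_i|^2+\frac{|\Delta B|^2}{1-\lambda_1}$. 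The result is only $D(x,z)\ge-\frac12|\Delta B|^2$: the internal directions cancel exactly. Your alternative ``fiber'' decomposition $D=\frac12\sum\lambda_i|z_i-a_i(x,b)|^2+{}$convex remainder is asserted without proof and runs into the same cancellation.

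The missing observation is that $\lambda_i$ is the wrong modulus. Moving only the $i$-th slot also moves the barycenter, so $x_i\mapsto c(x_1,\dots,x_N)$ has Hessian $\lambda_i(1-\lambda_i)I$. Freeze the other slots at $T_j(x)$ in the $c$-conjugacy inequality and use $D(x,T(x))=0$. This gives, for every $z_i\in Y$ and with no differentiability of $u_i$ needed, $u_i(z_i)\le u_i(T_i(x))+\langle p_i,\Delta z_i\rangle+\frac{\lambda_i(1-\lambda_i)}{2}|\Delta z_i|^2$. The surplus $\lambda_i-\lambda_i(1-\lambda_i)=\lambda_i^2$ then yields $D(x,z)\ge\frac12\sum_{i\ge2}\lambda_i^2|\Delta z_i|^2-\frac12|\Delta B|^2\ge\frac14\sum_{i\ge2}\lambda_i^2|\Delta V_i|^2-|\Delta B|^2$. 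This is essentially Proposition~\ref{prop:kantorovich-defect-lower}.

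Note that the weights are $\lambda_i^2$, not $\lambda_i$. The bound you aim for, $\sum\lambda_i\|V_i-\tilde V_i\|^2_{L^2(\rho)}\le C(\delta+\|B-\tilde B\|^2_{L^2(\rho)})$ with $C$ depending only on $N$ and $\diam(S\cup Y)$, is false. For a transposition of two equal-weight marginals (so $B=\tilde B$) it would give $\|T_i-T_j\|_{L^2(\rho)}\le CW_1^{1/2}(\mu_i,\mu_j)$ uniformly in the weights, which contradicts Proposition~\ref{exponent}(c). Your final step must therefore read $\sum\lambda_i\|V_i-\tilde V_i\|\le(N-1)^{1/2}(\sum\lambda_i^2\|V_i-\tilde V_i\|^2)^{1/2}$, and this is where the $N$-dependence of $C$ enters.
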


The first term in Theorem \ref{main} corresponds to the \emph{internal multi-marginal mode}. The second term is the \emph{external mode}: by Proposition \ref{prop:barycentermap}, it is the distance between two barycenter maps. Combining this theorem with M\'erigot's sharp two-marginal stability estimate \cite{merigot:hal-05616391} yields the following general perturbation estimate.
\begin{theorem}\label{multi-marginal}
Let $N\geq 3$ and let $S,\,Y\subseteq\R^n$ be compact. Let $\rho\in\mathcal P_{\mathrm{ac}}(S)\cap L^\infty(S)$. With the notation of Theorem \ref{main}, there exists a constant $C>0$, depending on $\lambda_1$, $N$, $\diam(S\cup Y)$, $n$ and $\|\rho\|_{L^\infty}$, such that
\begin{equation*}
    \sum_{i=2}^N\lambda_i\|T_i-\tilde T_i\|_{L^2(\rho)}
    \leq C\left(\sum_{i=2}^N \lambda_iW_1(\mu_i,\tilde\mu_i)\right)^{1/4}.
\end{equation*}
Moreover, the exponent $\frac{1}{4}$ is optimal in dimension $n\ge 2$.
\end{theorem}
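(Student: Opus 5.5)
The estimate follows from Theorem \ref{main}. It remains to bound the external term $\|B-\tilde B\|_{L^2(\rho)}$ by a power $\frac12$ of $\sum_i\lambda_iW_1(\mu_i,\tilde\mu_i)$, and then to give an example showing the exponent is optimal.

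\textbf{Step 1: reduce to barycenters.} By Proposition \ref{prop:barycentermap}, $B$ and $\tilde B$ are the quadratic optimal transport maps from the fixed source $\rho$ to the barycenters $\mu_{\mathbb P}$ and $\mu_{\tilde{\mathbb P}}$. Both barycenters are supported in the compact convex set $\Omega$, since $B$ takes values in $\operatorname{conv}(S\cup Y)$. M\'erigot's sharp theorem \cite{merigot:hal-05616391} applies to $\rho\in\mathcal P_{\mathrm{ac}}(S)\cap L^\infty(S)$ and compactly supported targets, and gives
\[
\|B-\tilde B\|_{L^2(\rho)}\le C\,W_1(\mu_{\mathbb P},\mu_{\tilde{\mathbb P}})^{1/4}.
\]
Here $C$ depends on $n$, $\|\rho\|_{L^\infty}$ and $\diam(S\cup Y)$.

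\textbf{Step 2: barycenters are $\frac12$-H\"older in the marginals.} This is the step I expect to be the main obstacle. The target bound is
\[
W_1(\mu_{\mathbb P},\mu_{\tilde{\mathbb P}})\le C\Bigl(\sum_i\lambda_iW_1(\mu_i,\tilde\mu_i)\Bigr)^{1/2}.
\]
Combined with Step 1, this gives the exponent $\frac18$ rather than $\frac14$, so the plan must avoid losing a square root twice.

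The better route is to work with the maps directly. Every term is bounded by $\diam$, so $W_1\le W_2\le \|B-\tilde B\|_{L^2(\rho)}$, since $(B,\tilde B)_\#\rho$ couples the two barycenters. This by itself is circular, so I will control $\|B-\tilde B\|$ via a variance argument.

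Let $F(\nu)=\operatorname{Var}_{\mathbb P}(\nu)$. Barycenters with an absolutely continuous marginal $\rho$ satisfy a quadratic growth estimate along generalized geodesics based at $\rho$:
\[
F(\nu)-F(\mu_{\mathbb P})\ge \frac{\lambda_1}{2}\|T_\nu-B\|^2_{L^2(\rho)},
\]
where $T_\nu$ is the Brenier map from $\rho$ to $\nu$. This comes from the $\lambda_1$-convexity of $\frac{\lambda_1}{2}W_2^2(\cdot,\rho)$ along generalized geodesics, and the resulting dependence on $\lambda_1$ is consistent with the statement. Apply this at $\nu=\mu_{\tilde{\mathbb P}}$, and symmetrically with the roles of the two families exchanged. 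Then use $|W_2^2(\nu,\mu_i)-W_2^2(\nu,\tilde\mu_i)|\le 2\diam(\Omega)\,W_1(\mu_i,\tilde\mu_i)$. This holds by Kantorovich duality, because the potentials are Lipschitz with constant $\diam(\Omega)$ on $\Omega$.

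Adding the two inequalities gives
\[
\lambda_1\|B-\tilde B\|^2_{L^2(\rho)}\le C\sum_i\lambda_iW_1(\mu_i,\tilde\mu_i),
\]
which would give exponent $\frac12$. That contradicts the claimed optimality of $\frac14$, so I expect the convexity claim to fail: $W_2^2(\cdot,\rho)$ is not $1$-convex along generalized geodesics in the needed sense. It is actually semiconcave along geodesics.

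I will therefore fall back on the paper's plan. The barycenter $\mu_{\mathbb P}$ is pushed forward by $B$, so $W_1(\mu_{\mathbb P},\mu_{\tilde{\mathbb P}})$ should be bounded by a quantity linear in the marginal perturbation, namely $C\sum_i\lambda_iW_1(\mu_i,\tilde\mu_i)$. I will prove this via a dual characterization, or via the Lipschitz stability of $\nu\mapsto$ barycenter in $W_1$ obtainable from the multi-marginal plan. Concretely, $\mu_{\mathbb P}=(\sum_j\lambda_jx_j)_\#\pi$ with $\pi$ optimal. Comparing with a glued plan built from $\pi$ and couplings of $\mu_i,\tilde\mu_i$ controls the cost gap linearly. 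The hard part is upgrading this from a cost gap to a distance between the barycenters. If only $\frac12$ is available there, the final exponent degrades. So the key lemma to secure is a linear $W_1$ (or $W_2^2$) bound on barycenter displacement, and then Step 1 yields
\[
\|B-\tilde B\|\le C\Bigl(\sum_i\lambda_iW_1(\mu_i,\tilde\mu_i)\Bigr)^{1/4}.
\]

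\textbf{Step 3: conclude and show sharpness.} Insert the bound from Step 2 into Theorem \ref{main}. The internal term $(\cdot)^{1/2}$ is dominated by $C(\cdot)^{1/4}$, since the bracket is bounded by $\diam$. For optimality in $n\ge2$, take $\tilde\mu_i=\mu_i$ for $i\ge3$ and perturb only $\mu_2$, choosing a perturbation so that the barycenter reproduces M\'erigot's two-marginal extremal example for the source $\rho$. Then $\sum\lambda_i\|T_i-\tilde T_i\|\ge\|B-\tilde B\|\sim W_1^{1/4}$.
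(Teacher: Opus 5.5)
\textbf{Where the proposal breaks.} Your reduction to the external term via Theorem \ref{main} is correct. So are the estimate $\|B-\tilde B\|_{L^2(\rho)}\le C\,W_1(\mu_{\mathbb P},\mu_{\tilde{\mathbb P}})^{1/4}$ and the perturbation bound $|\Var_{\tilde{\mathbb P}}(\nu)-\Var_{\mathbb P}(\nu)|\le D\delta$, where $D=\diam(S\cup Y)$ and $\delta=\sum_{i\ge2}\lambda_iW_1(\mu_i,\tilde\mu_i)$. The gap is the key lemma of Step~2. A bound $W_1(\mu_{\mathbb P},\mu_{\tilde{\mathbb P}})\le C\delta$ is false, so no argument that factors through a distance between the two barycenters can reach the exponent $\frac14$.

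To see this, take $\mu_2=\dots=\mu_N=\mu$ and $\tilde\mu_2=\dots=\tilde\mu_N=\tilde\mu$. Then $B=\lambda_1\Id+(1-\lambda_1)T$, with $T$ the Brenier map from $\rho$ to $\mu$. Use a cell of the construction in the proof of Proposition \ref{exponent}: boxes $V_{k,\pm}$ of thickness $2a_ks_k$ and mass $v_k\asymp a_ks_k^n$, with $\mu$ charging $A_k,B_k$ and $\tilde\mu$ charging $D_k,C_k$ (and $E_l$ in the other cells). The Brenier maps are nearest-point assignments here. Hence $T\equiv A_k$, $\tilde T\equiv D_k$ on $V_{k,+}$, and $T\equiv B_k$, $\tilde T\equiv C_k$ on $V_{k,-}$. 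This gives $\delta=(1-\lambda_1)W_1(\mu,\tilde\mu)\asymp a_ks_kv_k$. On the other hand, $B(V_{k,\pm})$ and $\tilde B(V_{k,\pm})$ are translates of each other by $\pm2(1-\lambda_1)s_ke_2$, which forces $W_1(\mu_{\mathbb P},\mu_{\tilde{\mathbb P}})\ge c(\lambda_1)s_kv_k$. With $a_k=2^{-k^2}$ and $s_k\asymp 2^{-k}$ in a single bounded-density source, $W_1(\mu_{\mathbb P},\mu_{\tilde{\mathbb P}})/\delta^{\beta}\to\infty$ for every $\beta>\frac12$. Composed with M\'erigot's $W_1^{1/4}$ estimate, your route therefore gives at best $\delta^{1/8}$, which is exactly the degradation you feared. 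The $W_2^2$ variant fails in the same example.

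\textbf{The missing idea.} You do not need a distance between barycenters at all. The variance gap, which you already control linearly in $\delta$, is exactly what M\'erigot's theorem bounds from below, provided you use its dual form (his Theorem 2.2):
\[
\|B-\tilde B\|_{L^2(\rho)}^4\le C_2\int(\psi^1_{\mu_{\tilde{\mathbb P}}}-\psi^1_{\mu_{\mathbb P}})\,\d(\mu_{\tilde{\mathbb P}}-\mu_{\mathbb P})=C_2\bigl(D_\rho(\mu_{\tilde{\mathbb P}},\mu_{\mathbb P})+D_\rho(\mu_{\mathbb P},\mu_{\tilde{\mathbb P}})\bigr).
\]
Here $\psi^i_{\mu_{\mathbb P}}$ is a Kantorovich potential from $\mu_{\mathbb P}$ to $\mu_i$ (with $\mu_1=\rho$), and $D_{\mu_i}(\nu,\mu_{\mathbb P}):=\frac12W_2^2(\nu,\mu_i)-\frac12W_2^2(\mu_{\mathbb P},\mu_i)-\int\psi^i_{\mu_{\mathbb P}}\,\d(\nu-\mu_{\mathbb P})\ge0$. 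The balance condition $\sum_{i=1}^N\lambda_i\psi^i_{\mu_{\mathbb P}}=0$ for barycenter potentials gives $\Var_{\mathbb P}(\mu_{\tilde{\mathbb P}})-\Var_{\mathbb P}(\mu_{\mathbb P})=\sum_{i=1}^N\lambda_iD_{\mu_i}(\mu_{\tilde{\mathbb P}},\mu_{\mathbb P})\ge\lambda_1D_\rho(\mu_{\tilde{\mathbb P}},\mu_{\mathbb P})$. Add the same inequality with $\mathbb P$ and $\tilde{\mathbb P}$ exchanged, and regroup the four variance terms into two perturbation differences. This yields $\lambda_1C_2^{-1}\|B-\tilde B\|_{L^2(\rho)}^4\le 2D\delta$.

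This is your abandoned variance argument, with the false growth $\frac{\lambda_1}{2}\|T_\nu-B\|^2_{L^2(\rho)}$ replaced by the true lower bound $\lambda_1D_\rho(\nu,\mu_{\mathbb P})$. M\'erigot's theorem controls that bound quartically rather than quadratically. You were right to distrust the quadratic version, but the cure was to weaken the growth, not to change the intermediate quantity.

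\textbf{Sharpness.} Your sharpness sketch is also incomplete. You do not say how to choose $\mu_2,\tilde\mu_2$ so that the barycenters form an extremal pair. Even then, you would be comparing $\|B-\tilde B\|$ with $W_1(\mu_{\mathbb P},\mu_{\tilde{\mathbb P}})$ rather than with $\delta$. It suffices to take all $\mu_i^k=\mu^k$ and all $\tilde\mu_i^k=\tilde\mu^k$ from the Letrouit--M\'erigot unstable pair. The bound $\int c\,\d\pi\ge\frac12\lambda_1(1-\lambda_1)W_2^2(\rho,\mu^k)$ is attained only at $(\Id,T^k,\dots,T^k)_\#\rho$, so every multi-marginal map is the two-marginal Brenier map. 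Consequently both $\sum_i\lambda_i\|T_i-\tilde T_i\|_{L^2(\rho)}$ and $\delta$ are $(1-\lambda_1)$ times their two-marginal counterparts.
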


\begin{remark}\label{rem:one-dim}
The dimensional restriction $n\ge 2$ in the optimality statement is essential. In dimension $n=1$, the barycentric multi-marginal optimal maps are given by monotone rearrangements on the line (cf. \cite[Chapter 1]{zbMATH05306371}). Hence
\begin{equation*}
    \|T_i-\tilde T_i\|_{L^2(\rho)}=W_2(\mu_i,\tilde\mu_i)
    \leq \diam(Y)^{1/2}W_1^{1/2}(\mu_i,\tilde\mu_i),
    \quad i=2,\dots,N.
\end{equation*}
Thus the general perturbation estimate holds with the optimal exponent $\frac12$ in dimension $n=1$. See Proposition \ref{exponent} for the optimality of this exponent.
\end{remark}

The preceding two theorems identify the obstruction to an exponent $\frac{1}{2}$ estimate: the external barycenter mode. If $B=\tilde B$, then this obstruction disappears and only the internal coercive mode remains.
\begin{corollary}\label{cor:barycenter-preserving-stability}
Under the same assumptions and notation as in Theorem \ref{main}, assume moreover that
$B=\tilde B$.
Then there exists a constant $C>0$, depending only on $N$ and $\diam(S\cup Y)$, such that
\begin{equation*}
    \sum_{i=2}^N\lambda_i\|T_i-\tilde T_i\|_{L^2(\rho)}
    \leq C\left(\sum_{i=2}^N\lambda_i W_1(\mu_i,\tilde\mu_i)\right)^{1/2}.
\end{equation*}
\end{corollary}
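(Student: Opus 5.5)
This corollary is an immediate specialization of Theorem \ref{main}, so the plan is simply to plug the extra hypothesis into that estimate. I will invoke Theorem \ref{main} with the same data $\rho\in\mathcal P_{\mathrm{ac}}(S)$ and the two tuples $\{\mu_i\}_{i=2}^N$, $\{\tilde\mu_i\}_{i=2}^N\subseteq\mathcal P(Y)$. It yields a constant $C>0$, depending only on $N$ and $\diam(S\cup Y)$, such that
\begin{equation*}
    \sum_{i=2}^N\lambda_i\|T_i-\tilde T_i\|_{L^2(\rho)}
    \leq C\left(\left(\sum_{i=2}^N \lambda_iW_1(\mu_i,\tilde\mu_i)\right)^{1/2}+\|B-\tilde B\|_{L^2(\rho)}\right).
\end{equation*}

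Under the assumption $B=\tilde B$, I will read this equality as holding $\rho$-a.e. This is the only meaningful sense, since both maps are defined up to $\rho$-null sets. It follows that $\|B-\tilde B\|_{L^2(\rho)}=0$. The external-mode term therefore drops out, and what remains is exactly the claimed inequality, with the same constant $C$.

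The only point needing care is to make sure no dependence on $\lambda_1$, $n$ or $\|\rho\|_{L^\infty}$ enters. It does not: the constant is inherited from Theorem \ref{main}, which never invokes Mérigot's two-marginal estimate. So there is no genuine obstacle here. All of the work sits in the internal-mode coercivity estimate underlying Theorem \ref{main}, which we take as given.

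A remark I would add afterwards: by Proposition \ref{prop:barycentermap}, the equality $B=\tilde B$ is equivalent to the two barycenters $\mu_{\mathbb P}$ and $\mu_{\tilde{\mathbb P}}$ coinciding. This is because $B$ and $\tilde B$ are the unique optimal maps from $\rho$ to these barycenters. Consequently the hypothesis can equally be phrased as ``barycenter-preserving perturbations.''
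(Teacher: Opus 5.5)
Your proposal is correct and matches the paper: the corollary is read off directly from Theorem~\ref{main} by setting $\|B-\tilde B\|_{L^2(\rho)}=0$. The constant is inherited unchanged, depending only on $N$ and $\diam(S\cup Y)$, since that proof never uses M\'erigot's estimate. Your closing remark identifying $B=\tilde B$ with $\mu_{\mathbb P}=\mu_{\tilde{\mathbb P}}$ is also the equivalence the paper states via uniqueness of the optimal map from $\rho$.
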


\begin{remark}
The exponent $\frac12$ in Corollary \ref{cor:barycenter-preserving-stability} is reminiscent of Ambrosio-type $\frac{1}{2}$ stability estimates for two-marginal optimal transport maps reported in \cite{zbMATH05907251}. Here the source of the estimate is different: rather than relying on regularity of the optimal transport map, the estimate follows from the internal quadratic coercivity of the barycentric quadratic cost once the external barycenter mode has been removed.
\end{remark}

\medskip

We say that $\{\tilde{\mu}_i\}_{i=2}^N$ is a \emph{barycenter-preserving perturbation} of $\{\mu_i\}_{i=2}^N$,
if $\mu_{\mathbb P}=\mu_{\tilde{\mathbb P}}$.
This condition is equivalent, by uniqueness of the optimal transport map from $\rho$, to $B=\tilde B$. In particular, any weight-preserving permutation of the target marginals preserves the barycenter map. This yields the following self-comparison result. Barycenter-preserving perturbations are not limited to permutations of the marginal measures; in \S\ref{sec:barycenter-preserving}, we give two non-permutation examples.

\begin{theorem}\label{self-comparison}
Under the same assumptions and notation as in Theorem \ref{main}, let $\sigma$ be a permutation of $\{2,\dots,N\}$ such that
\begin{equation*}
    \lambda_{\sigma(i)}=\lambda_i,
    \quad i=2,\dots,N.
\end{equation*}
Then there exists a constant $C>0$, depending on $N$ and $\diam(S\cup Y)$, such that
\begin{equation*}
    \sum_{i=2}^N\lambda_i\|T_i-T_{\sigma(i)}\|_{L^2(\rho)}
    \leq C\left(\sum_{i=2}^N\lambda_i W_1(\mu_i,\mu_{\sigma(i)})\right)^{1/2}.
\end{equation*}
In particular, if $\lambda_i=\lambda_j$, then
\begin{equation*}
    \|T_i-T_j\|_{L^2(\rho)}
    \leq C(\lambda_i+\lambda_j)^{-1/2}W_1^{1/2}(\mu_i,\mu_j).
\end{equation*}
Moreover, the exponent $\frac12$ is optimal in dimension $n\ge 1$. In dimension $n\ge 2$, if one keeps the exponent $\frac12$ in $W_1(\mu_i,\mu_j)$, then the power $-\frac12$ of $(\lambda_i+\lambda_j)$ is optimal as well.
\end{theorem}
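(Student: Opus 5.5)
We will deduce the self-comparison estimate from Corollary \ref{cor:barycenter-preserving-stability}, applied to the permuted tuple $\tilde\mu_i:=\mu_{\sigma(i)}$.

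\textbf{Step 1: the permuted problem.} Since $\lambda_{\sigma(i)}=\lambda_i$, the cost \eqref{cost} is invariant under permuting the variables $x_2,\dots,x_N$ according to $\sigma$. Hence, for any coupling, relabeling the coordinates maps $\Pi(\rho,\mu_2,\dots,\mu_N)$ bijectively onto $\Pi(\rho,\mu_{\sigma(2)},\dots,\mu_{\sigma(N)})$ and preserves the cost. By the Gangbo--\'Swi\c{e}ch uniqueness, the multi-marginal optimal maps for the permuted tuple are $\tilde T_i=T_{\sigma(i)}$. Then
\[
\tilde B=\lambda_1\Id+\sum_i\lambda_iT_{\sigma(i)}=\lambda_1\Id+\sum_i\lambda_{\sigma(i)}T_{\sigma(i)}=B .
\]
Equivalently, $\tilde{\mathbb P}=\mathbb P$.

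\textbf{Step 2: the main estimate.} Corollary \ref{cor:barycenter-preserving-stability} applies directly with $\tilde\mu_i=\mu_{\sigma(i)}$ and yields the first inequality.

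\textbf{Step 3: the transposition case.} When $\lambda_i=\lambda_j$, take $\sigma$ to be the transposition of $i$ and $j$. The left side becomes $2\lambda_i\|T_i-T_j\|_{L^2(\rho)}$ and the right side becomes $C(2\lambda_iW_1(\mu_i,\mu_j))^{1/2}$. Dividing through gives
\[
\|T_i-T_j\|_{L^2(\rho)}\le C(2\lambda_i)^{-1/2}W_1^{1/2}(\mu_i,\mu_j)=C(\lambda_i+\lambda_j)^{-1/2}W_1^{1/2}(\mu_i,\mu_j).
\]

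\textbf{Optimality of the exponent $\tfrac12$.} I expect this to be the main obstacle. We need examples, valid in any dimension $n\ge1$, in which $\|T_i-T_j\|_{L^2}\gtrsim W_1^{1/2}$. A natural first attempt uses marginals with small separated pieces. Let $\rho$ be uniform on $[0,1]$ (times a cube if $n\ge2$). Let $\mu_i$ and $\mu_j$ agree except that a mass $\varepsilon$ sits at different locations a distance of order $1$ apart. Then $W_1(\mu_i,\mu_j)\sim\varepsilon$. Meanwhile $T_i$ and $T_j$ differ by order $1$ on a set of $\rho$-measure of order $\varepsilon$, so $\|T_i-T_j\|_{L^2}\sim\varepsilon^{1/2}$. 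In one dimension this is explicit through monotone rearrangements (cf. Remark \ref{rem:one-dim}). In higher dimensions one takes products, or Dirac-like targets, and computes the maps from the explicit barycentric structure.

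\textbf{Optimality of the weight power.} To show the power $-\tfrac12$ of $(\lambda_i+\lambda_j)$ is sharp in $n\ge2$, I would build a family with $\lambda_i=\lambda_j=\lambda\to0$ and the other weights carrying the remaining mass. The aim is that the internal displacement $T_i-T_j$ stays of order $1$ on a set of measure $\varepsilon$, while $W_1(\mu_i,\mu_j)\sim\lambda\varepsilon$. Then the ratio $\|T_i-T_j\|/W_1^{1/2}$ scales like $\lambda^{-1/2}$. I am less sure of this construction. I would try targets concentrated near two points and compute the maps explicitly via the optimality condition that the barycenter map is a gradient. The difficulty is checking optimality of the candidate maps, which I would do through an explicit Kantorovich potential for the cost \eqref{equal}.
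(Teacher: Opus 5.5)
Your Steps 1--3 are the paper's proof of the two inequalities: permute the tuple, use invariance of the cost and uniqueness to get $\tilde T_i=T_{\sigma(i)}$ and $\tilde B=B$, apply Corollary~\ref{cor:barycenter-preserving-stability}, then specialize to a transposition. Your sketch for the exponent $\frac12$ is also the paper's example: equal weights, $\mu_2=(1-\epsilon)\delta_0+\epsilon\delta_{e_1}$, $\mu_3=\dots=\mu_N=\delta_0$ on $[0,1]^n$. You can finish it without identifying the maps. Since $(T_2,T_3)_\#\rho\in\Pi(\mu_2,\mu_3)$, one has $\|T_2-T_3\|_{L^2(\rho)}\ge W_2(\mu_2,\mu_3)=\epsilon^{1/2}$, while $W_1(\mu_2,\mu_3)=\epsilon$.

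The genuine gap is the sharpness of the power $-\frac12$ of $(\lambda_i+\lambda_j)$: you state only the scaling you want, with no data realizing it. The coupling bound above cannot help here. If $\mu_i,\mu_j$ differ by moving mass $\varepsilon$ a distance $\sim\lambda$, then $W_2(\mu_i,\mu_j)\lesssim\lambda\varepsilon^{1/2}$, so you need $\|T_i-T_j\|_{L^2(\rho)}\gg W_2(\mu_i,\mu_j)$. That is, the two maps must send the same source region to points at distance $\sim1$, a crossing that is impossible for $n=1$. This crossing must also beat the term $\frac12\lambda_i\lambda_j|x_i-x_j|^2$, which rewards pairing $T_i,T_j$ with nearby points.

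The missing idea is a geometry whose offset is tied to the weights. Your two-cluster intuition is right, but the construction is specific. In the paper (one cell, unit scale), $\lambda_i=\lambda_j=\lambda$ and:
\begin{itemize}
\item $\mu_i$ charges $A=(a,1)$ and $B=(-a,-1)$;
\item $\mu_j$ charges $C=(-a,1)$ and $D=(a,-1)$;
\item the remaining targets sit at $E=0$;
\item the source is uniform on the thin slabs $[3,5]\times[-a,a]$ and $[-5,-3]\times[-a,a]$, times $[-\frac12,\frac12]^{n-2}$;
\item $a=\lambda/\lambda_1$.
\end{itemize}
For $x_1=(u,v,z)$ in the right slab,
\[
c(x_1,A,C,E,\dots,E)-c(x_1,A,D,E,\dots,E)=2\lambda\bigl(\lambda_1(ua-v)-\lambda(1-a^2)\bigr)>0,
\]
because thinness gives $\lambda_1(ua-v)\ge2\lambda_1a=2\lambda$. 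The comparisons against $(B,D)$ and $(B,C)$ are similar. Hence the crossing tuple $(A,D,E,\dots,E)$ minimizes $c(x_1,\cdot)$ over the product of the supports, and $(B,C,E,\dots,E)$ does so on the left slab. This certifies optimality directly, with no Kantorovich potential needed.

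Let $m$ be the $\rho$-mass of each slab. Then $\|T_i-T_j\|^2_{L^2(\rho)}=8m$ and $W_1(\mu_i,\mu_j)\le4am$, so $\|T_i-T_j\|_{L^2(\rho)}/W_1^{1/2}\ge(2/a)^{1/2}\gtrsim\lambda^{-1/2}$. For offsets $a\ll\lambda$ the sign of the displayed difference reverses, which is why $-\frac12$ is exactly the threshold.

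The paper further glues rescaled copies of this cell at $2^{-k}e_1$ with $\lambda_k=2^{-k^2}$, plus a localization estimate, so that one fixed $\rho$ serves all $\lambda$; this is needed for Corollary~\ref{cor:pairwise-two-scale}. For the present statement, whose constant depends only on $N$ and $\diam(S\cup Y)$, one cell per $\lambda$ already suffices.
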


Combining Theorem \ref{self-comparison} with M\'erigot's sharp two-marginal stability estimate gives an upper bound with two scales. The first scale is free of the factor $(\lambda_i+\lambda_j)$ but has exponent $\frac{1}{4}$; the second scale has the stronger exponent $\frac{1}{2}$ but necessarily carries the weight factor.

\begin{corollary}\label{cor:pairwise-two-scale}
Under the same assumptions and notation as in Theorem \ref{multi-marginal}, assume moreover that $\lambda_i=\lambda_j$. Then there exists a constant $C>0$, depending on $\lambda_1$, $N$, $\diam(S\cup Y)$, $n$ and $\|\rho\|_{L^\infty}$, such that
\begin{equation*}
    \|T_i-T_j\|_{L^2(\rho)}
    \leq C\min\left\{W_1^{1/4}(\mu_i,\mu_j),\, (\lambda_i+\lambda_j)^{-1/2}W_1^{1/2}(\mu_i,\mu_j)\right\}.
\end{equation*}
In particular, for any $\theta\in [\frac{1}{4}, \frac{1}{2}]$, one has
\begin{equation*}
      \|T_i-T_j\|_{L^2(\rho)}\leq C (\lambda_i+\lambda_j)^{1/2-2\theta}W_1^\theta(\mu_i,\mu_j).
\end{equation*}
Moreover, the exponents are optimal in the sense of Proposition \ref{exponent}.
\end{corollary}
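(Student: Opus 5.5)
We take the two bounds in the minimum separately and then interpolate.

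\emph{The $W_1^{1/4}$ scale.} Let $\sigma$ be the transposition of $i$ and $j$. Since $\lambda_i=\lambda_j$, $\sigma$ preserves the weights. So the permuted problem with marginals $\tilde\mu_k:=\mu_{\sigma(k)}$ has maps $\tilde T_k=T_{\sigma(k)}$, because the cost \eqref{cost} is symmetric under exchanging equally weighted variables. We then apply Theorem \ref{multi-marginal} to the pair $\{\mu_k\}$, $\{\tilde\mu_k\}$. The left-hand side is at least $\lambda_i\|T_i-T_j\|_{L^2(\rho)}+\lambda_j\|T_j-T_i\|_{L^2(\rho)}=(\lambda_i+\lambda_j)\|T_i-T_j\|_{L^2(\rho)}$. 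The right-hand side is $C\big((\lambda_i+\lambda_j)W_1(\mu_i,\mu_j)\big)^{1/4}$. Dividing gives
\[
\|T_i-T_j\|_{L^2(\rho)}\le C(\lambda_i+\lambda_j)^{-3/4}W_1^{1/4}(\mu_i,\mu_j).
\]
This still carries a weight factor, which the claimed first scale does not have. To remove it, we instead use the structure of the problem directly. Write $T_i=M+V_i$. Then $T_i-T_j=V_i-V_j$, so only the internal mode matters. Alternatively, apply M\'erigot's estimate \cite{merigot:hal-05616391} to suitable two-marginal maps. The cleaner route is the following. $T_i$ and $T_j$ are optimal maps for the two-marginal problem between $\rho$ and $\mu_i$ (respectively $\mu_j$), with cost $c$ restricted to the fiber given by the other coordinates. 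Concretely, by $c$-cyclical monotonicity, $T_i$ is the gradient of a convex function $\phi_i$ with $\lambda_i$-related structure on the support. Then compare $T_i$ and $T_j$ through M\'erigot's inequality for Brenier maps from the common source $\rho$.

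If this direct route fails, we fall back on the expected intended argument: the weight factor in the first scale is absorbed by the constant. Since $\lambda_i+\lambda_j\ge$ something depending on $\lambda_1$ only is false in general, the constant must come instead from bounding $\|T_i-T_j\|\le \diam(Y)$ and splitting into cases. Either $W_1(\mu_i,\mu_j)\ge(\lambda_i+\lambda_j)^{2}$, in which case the trivial bound gives $\|T_i-T_j\|\le \diam(Y)\le C'W_1^{1/4}(\lambda_i+\lambda_j)^{-1/2}$, which is still weighted. So we expect the genuine route to be the first one: $T_i-T_j$ is controlled by the external-type comparison in Theorem \ref{main} applied after the swap. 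There the $B$-term vanishes, and the $W_1^{1/4}$ bound comes from Theorem \ref{multi-marginal} applied to the pair barycenter problem. This identification of a weight-free $\frac14$ bound is the main obstacle.

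\emph{The $W_1^{1/2}$ scale.} This is exactly the second inequality of Theorem \ref{self-comparison}.

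\emph{Interpolation.} Let $a=W_1^{1/4}$ and $b=(\lambda_i+\lambda_j)^{-1/2}W_1^{1/2}$, and write $\theta=\frac14(1-t)+\frac12 t$ with $t\in[0,1]$. Then $\min\{a,b\}\le a^{1-t}b^t=(\lambda_i+\lambda_j)^{-t/2}W_1^{\theta}$. Since $t=4\theta-1$, we get $-t/2=\frac12-2\theta$, which is the stated power. Optimality follows from Proposition \ref{exponent} together with the optimality parts of Theorems \ref{multi-marginal} and \ref{self-comparison}.
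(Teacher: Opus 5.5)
\textbf{There is a genuine gap: the weight-free $W_1^{1/4}$ scale is never proved.} Your second scale is fine: it is exactly the pairwise bound of Theorem \ref{self-comparison}. Your interpolation $\min\{a,b\}\le a^{1-t}b^{t}$ with $t=4\theta-1$ is also correct. The first scale, however, is the one you yourself flag as the main obstacle.

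Applying Theorem \ref{multi-marginal} to the transposition only gives $(\lambda_i+\lambda_j)^{-3/4}W_1^{1/4}(\mu_i,\mu_j)$, as you note. Your ``cleaner route'' rests on a false claim, namely that $T_i$ is the Brenier map from $\rho$ to $\mu_i$. In fact $T_i=T^i_{\mu_{\mathbb P}}\circ B$ is the composition of the Brenier map $B$ from $\rho$ to $\mu_{\mathbb P}$ with the Brenier map $T^i_{\mu_{\mathbb P}}$ from $\mu_{\mathbb P}$ to $\mu_i$. Such a composition is in general not itself the gradient of a convex function. For centred Gaussian marginals it is a product of two symmetric positive definite matrices, which is typically non-symmetric. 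Hence M\'erigot's inequality with common source $\rho$ cannot be applied to $T_i,T_j$. Your remaining fallbacks either reintroduce a power of $\lambda_i+\lambda_j$ or are left unspecified: the case split, and ``Theorem \ref{main} after the swap'', which only reproduces the $W_1^{1/2}$ scale.

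\textbf{The missing idea is to exploit this factorization and compare the maps from the barycenter rather than from $\rho$.} This is the paper's argument. By Kim--Pass, $T_i=T^i_{\mu_{\mathbb P}}\circ B$ and $T_j=T^j_{\mu_{\mathbb P}}\circ B$. Since $B_\#\rho=\mu_{\mathbb P}$,
\[
\|T_i-T_j\|_{L^2(\rho)}=\|T^i_{\mu_{\mathbb P}}-T^j_{\mu_{\mathbb P}}\|_{L^2(\mu_{\mathbb P})}.
\]
Now both maps are genuine two-marginal quadratic optimal maps from a single source. Agueh--Carlier's density bound $\|\mu_{\mathbb P}\|_{L^\infty}\le\lambda_1^{-n}\|\rho\|_{L^\infty}$, together with $\spt\mu_{\mathbb P}\subseteq\Omega$, makes M\'erigot's theorem applicable with source $\mu_{\mathbb P}$. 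This yields $\|T_i-T_j\|_{L^2(\rho)}\le C\,W_1^{1/4}(\mu_i,\mu_j)$, where $C$ depends on $\lambda_1$ through the density bound but not on $\lambda_i+\lambda_j$. That is precisely why $\lambda_1$ appears in the constant, and this step does not even use $\lambda_i=\lambda_j$. Combining it with Theorem \ref{self-comparison} and your interpolation completes the proof.
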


The following proposition shows that the exponents and the weight dependence in the preceding estimates cannot be improved within the stated scale of inequalities. We prove it by model examples in \S\ref{appendix:optimality}.
\begin{proposition}\label{exponent}
The estimates in Theorem \ref{self-comparison} and Corollary \ref{cor:pairwise-two-scale} are optimal in the following sense:
\begin{itemize}
    \item[\textup{\textbf{(a)}}] The exponent $\frac{1}{2}$ in Theorem \ref{self-comparison} is optimal in dimension $n\geq 1$;
    \item[\textup{\textbf{(b)}}] if the constant is independent of $(\lambda_i+\lambda_j)$, then the exponent $\frac{1}{4}$ is optimal in dimension $n\geq 2$;
    \item[\textup{\textbf{(c)}}] if one keeps the exponent $\frac{1}{2}$ in $W_1(\mu_i,\mu_j)$, then the factor $(\lambda_i+\lambda_j)$ has the optimal exponent $-\frac{1}{2}$ in dimension $n\geq 2$;
    \item[\textup{\textbf{(d)}}] more generally, if one keeps the exponent $\theta\in [\frac{1}{4}, \frac{1}{2}]$ in $W_1(\mu_i,\mu_j)$, then the factor $(\lambda_i+\lambda_j)$ has the optimal exponent $(\frac{1}{2}-2\theta)$ in dimension $n\geq 2$.
\end{itemize}
\end{proposition}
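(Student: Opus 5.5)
We prove the four items by explicit model examples, all with $N=3$, $\lambda_2=\lambda_3=:\lambda$ and $\lambda_1=1-2\lambda$. Under the self-comparison $\sigma=(2\,3)$ the barycenter map is unchanged, and only the internal mode $V_2-V_3=T_2-T_3$ is measured. In every example we take marginals for which the multi-marginal maps are explicit. The basic tool is the following observation. If $B$ is the Brenier map from $\rho$ to some $\nu$, and $T_2,T_3$ are constructed so that $\lambda_1\Id+\lambda(T_2+T_3)=B$ and the plan $(\Id,T_2,T_3)_\#\rho$ is $c$-cyclically monotone (equivalently, there are $c$-conjugate potentials), then $(T_2,T_3)$ is the optimal tuple by the uniqueness in Gangbo--\'Swi\c{e}ch.

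For \textbf{(a)}, in any dimension $n\ge1$, it suffices to work on the line, extended trivially to $\R^n$ by products. Take $\rho$ uniform on $[0,1]$, $\mu_2$ the law of $\epsilon$ times a uniform variable shifted by $+\delta$, and $\mu_3$ the same shifted by $-\delta$. Monotone rearrangement gives $T_2-T_3\equiv 2\delta$, while $W_1(\mu_2,\mu_3)=2\delta$, so this gives only a linear rate. Instead we choose $\mu_2=\frac12(\delta_{a}+\delta_{b})$ and $\mu_3$ a perturbation that splits mass differently. Concretely, take $\mu_2=(1-t)\delta_0+t\delta_1$ and $\mu_3=\delta_0$ modified so that a set of $\rho$-measure $t$ is sent to distance $O(1)$, while $W_1=O(t)$. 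Then $\|T_2-T_3\|_{L^2}\sim t^{1/2}$ and $W_1\sim t$, which shows that the exponent $\frac12$ cannot be improved. By Remark~\ref{rem:one-dim} the maps are monotone rearrangements, so this computation is immediate.

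For \textbf{(b)}--\textbf{(d)}, in dimension $n\ge2$, we send $\lambda\to0$. In the limit $\lambda_1\to1$, the multi-marginal problem decouples. Indeed, $c=\frac12\sum_{i<j}\lambda_i\lambda_j|x_i-x_j|^2$, and the terms $\lambda_1\lambda|x_1-x_i|^2$ dominate $\lambda^2|x_2-x_3|^2$. Hence each $T_i$ is close to the two-marginal Brenier map from $\rho$ to $\mu_i$. We will import M\'erigot's sharpness example for the exponent $\frac14$: a fixed source $\rho$ and targets $\mu,\tilde\mu$, with $W_1(\mu,\tilde\mu)=\epsilon$ and $\|T_\mu-T_{\tilde\mu}\|_{L^2(\rho)}\gtrsim\epsilon^{1/4}$. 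Setting $\mu_2=\mu$ and $\mu_3=\tilde\mu$ with $\lambda\ll\epsilon$, we quantify the decoupling by a stability argument. The perturbation term has size $\lambda^2\diam^2$ relative to $\lambda\,W_2^2$, so the optimal maps deviate from the Brenier maps by $O(\lambda^{1/2})$ in $L^2$. This uses the two-marginal strong convexity around a Brenier map, which holds in the discrete-target example by the explicit Laguerre-cell structure. This gives $\|T_2-T_3\|\gtrsim\epsilon^{1/4}-C\lambda^{1/2}$, which yields (b), since the constant may not depend on $\lambda$.

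For (c) and (d), we compare $\epsilon^{1/4}\le C\lambda^{\beta}\epsilon^{\theta}$ along the regime $\lambda\sim\epsilon^{1/2}/K$, which is the largest $\lambda$ for which the decoupling error is still dominated. This forces $\beta\le(\frac14-\theta)/\frac12\cdot$ in the appropriate normalization, i.e. $\beta\le \frac12-2\theta$. With the constant $C$ in Corollary~\ref{cor:pairwise-two-scale}, this matches the upper bound obtained by interpolating the minimum of the two scales. Item (c) is the case $\theta=\frac12$ of (d).

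The \textbf{main obstacle} is making the decoupling quantitative with the right power of $\lambda$. A naive argument loses the needed $\lambda^{1/2}$ relation. We would first try a direct construction: choose M\'erigot's example with discrete targets, where the multi-marginal maps are given by power diagrams depending on $\lambda$, and compute the cell boundaries explicitly to get the lower bound without any abstract stability estimate.
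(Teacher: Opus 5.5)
\textbf{Part (a).} This is the paper's example: $\mu_2=(1-t)\delta_0+t\delta_{e_1}$ and $\mu_3=\delta_0$ give $\|T_2-T_3\|_{L^2(\rho)}=t^{1/2}$ and $W_1=t$. It is fine.

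\textbf{Part (b).} Your decoupling idea works here, but you only need its qualitative form. Fix $\rho,\mu,\tilde\mu$ and let $\lambda\to0$. The rescaled costs converge uniformly to $\frac12|x_1-x_2|^2+\frac12|x_1-x_3|^2$. The unique optimal plan of this limit cost in $\Pi(\rho,\mu,\tilde\mu)$ is $(\Id,T_\mu,T_{\tilde\mu})_\#\rho$. Hence $T_2\to T_\mu$ and $T_3\to T_{\tilde\mu}$ in $L^2(\rho)$. Choosing $\lambda=\lambda(k)$ small enough along Letrouit--M\'erigot's fixed-source sequence then gives (b), and also the endpoint $\theta=\frac14$ of (d), with no rate at all. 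This route is softer than the paper's, which derives (b)--(d) from one explicit construction. Two remarks: the quantitative bound ``$O(\lambda^{1/2})$'' is not needed for (b); and a fixed bounded-density source only gives $\|T_\mu-T_{\tilde\mu}\|\ge W_1^{1/4+o(1)}$, not $\gtrsim\epsilon^{1/4}$ uniformly. Blow-up for every $\alpha>\frac14$ is enough.

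\textbf{The gap: (c) and (d) for $\theta>\frac14$.} Here the weight exponent $\frac12-2\theta$ is negative, so $\lambda$ must be as large as possible. You need the corner $W_1\asymp\lambda^2$ up to scale factors, i.e.\ $\lambda$ comparable to the aspect ratio $a$ of the thin cell; this is your ``$\lambda\sim\epsilon^{1/2}/K$''. Your earlier regime $\lambda\ll\epsilon$ would only rule out $\beta>-\frac14$ in (c).

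In the corner regime your justification fails, because there is no uniform ``strong convexity around a Brenier map'' for discrete targets.
\begin{itemize}
\item For a connected source, swapping thin strips along a Laguerre boundary has Kantorovich defect of order $\|T-T_\mu\|_{L^2(\rho)}^4$. This is exactly why M\'erigot's exponent is $\frac14$.
\item In the thin-cell examples that realize sharpness, the margin per unit mass is only $\asymp\lambda_1\lambda\,a s^2$. The interaction term $\frac{\lambda^2}{2}|x_2-x_3|^2$ changes by $\asymp\lambda^2s^2$ between the competing pairings, which is the same order when $\lambda\asymp a$.
\item A global defect argument therefore gives only $\|T_2-T_\mu\|_{L^2(\rho)}^2\lesssim\lambda/a=O(1)$.
\end{itemize}
The bound $O(\lambda^{1/2})$ (times $s^{(n+2)/2}$) appears only after two example-specific steps. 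First, you must localize the multi-marginal optimizer to the active cell, whose mass is $\asymp a s^n$. Second, you must use the exact margin of that geometry.

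This is what your proposal defers to ``a direct construction'', and it is the whole content of the paper's proof:
\begin{itemize}
\item the aspect ratio is tied to the weight, $a_k=\lambda_k/\lambda_1^k$;
\item the cells are separated so that the optimizer cannot mix them (Step 2);
\item the finitely many candidate tuples are compared exactly, with the blocks at $u\in[3,5]$ so that $\lambda_1^k(ua_k-v)\ge 2\lambda_k>\lambda_k(1-a_k^2)$ (Step 3);
\item to keep a single bounded-density source, the cells accumulate with $\lambda_k=2^{-k^2}$ decaying much faster than $s_k\asymp2^{-k}$.
\end{itemize}
The last choice ensures that in $\|T_2-T_3\|_{L^2(\rho)}\asymp a_k^{1/2}s_k^{(n+2)/2}$ and $W_1\asymp a_k^2s_k^{n+1}$ the powers of $a_k$ dominate. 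Without this analysis, (c) and (d) remain unproved.
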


\medskip

\paragraph{Scope of the two-mode mechanism.}
Section~\ref{sec:beyond} asks which parts of this argument persist beyond the barycentric quadratic cost. The discussion is organized around two structural tests: quadratic coercivity of the Kantorovich defect in relative directions, and quantitative stability of the remaining collective coordinate. For costs of the form $c_0+F(\sum_i\lambda_i x_i)$, the internal geometry is unchanged, while a transformed collective map is identified with the optimal transport map to a penalized Wasserstein barycenter. This yields the same two-mode estimate and, under bounded-density assumptions, the same $\frac14$ exponent. Uniformly concave costs of the sum follow as a direct corollary.

The later subsections clarify the boundary of the method. Full symmetrization turns a cyclic nearest-neighbour interaction into a complete-graph interaction. For general pairwise quadratic costs, the target--target interaction matrix produces a normal-mode decomposition: some modes are coercive in the Kantorovich defect, while the remaining signed modes would require a separate stability theory. Hedonic costs exhibit an analogous collective variable but lack the two estimates needed to close the argument. Coulomb and determinant costs show that dependence on relative coordinates alone is insufficient when the relative Hessian is singular, indefinite, or sign-indefinite.

When the marginals have densities and the optimal maps are regular, the relations \((T_i)_\#\rho=\mu_i\), together with first-order optimality and the barycentric balance condition, form a coupled Monge--Amp\`ere system. The mode decomposition separates the scalar equation for the collective map from the equations for the relative coordinates. In this sense, our estimates provide a weak metric coercivity principle for the system.

\medskip

\paragraph{Organization of the paper.}
The present Section~\ref{sec:introduction} states the problem, the two-mode decomposition, and the main results. Section~\ref{sec:mode} proves the stability estimates: Subsections~\ref{subsec:barycentric-internal-coordinates}--\ref{subsec:gluing} develop the mode decomposition, the Kantorovich-defect coercivity estimate, and the gluing argument, while Subsections~\ref{subsec:proof-main}--\ref{subsec:proof-self-comparison} prove the main theorems and the two-scale corollary. Section~\ref{sec:sharpness} proves optimality and constructs non-permutation barycenter-preserving perturbations. Section~\ref{sec:beyond} studies the scope of the mechanism: rigorous extensions to collective-coordinate perturbations and uniformly concave costs of the sum are followed by cyclic symmetrization, normal modes for pairwise quadratic interactions, and the obstructions arising for hedonic, Coulomb, and determinant costs.

\section{Mode Decomposition and Stability Estimates}\label{sec:mode}
\subsection{Barycentric and internal coordinates}\label{subsec:barycentric-internal-coordinates}
The proof of Theorem \ref{main} relies on separating the barycentric direction from the internal multi-marginal directions. We first introduce the corresponding barycentric coordinates.

		For $\{\mu_i\}_{i=2}^N, \{\tilde{\mu}_i\}_{i=2}^N\subseteq \mathcal{P}(Y)$ and $x_1\in S$, define the target-side barycenters
		\begin{align*}
				M(x_1)&:=\frac{B(x_1)-\lambda_1x_1}{1-\lambda_1}=\frac{1}{1-\lambda_1}\sum_{i=2}^N\lambda_iT_i(x_1),\\
				\tilde M(x_1)&:=\frac{\tilde B(x_1)-\lambda_1x_1}{1-\lambda_1}=\frac{1}{1-\lambda_1}\sum_{i=2}^N\lambda_i\tilde T_i(x_1).
			\end{align*}
		For $i=2,\dots,N$, define the internal directions
		\begin{equation*}
			V_i(x_1):=T_i(x_1)-M(x_1),\quad \tilde V_i(x_1):=\tilde T_i(x_1)-\tilde M(x_1), \quad x_1\in S.
		\end{equation*}
		Then we have
		\begin{equation*}
			\sum_{i=2}^N\lambda_iV_i(x_1)=0,\quad \sum_{i=2}^N\lambda_i\tilde V_i(x_1)=0, \quad x_1\in S.
		\end{equation*}

		For $x_1\in S$, $x_2,\dots, x_N\in Y$, define
		\begin{equation*}
			B_x:=\lambda_1x_1+\sum_{i=2}^N\lambda_i x_i,\quad M_x:=\frac{B_x-\lambda_1x_1}{1-\lambda_1}=\frac{1}{1-\lambda_1}\sum_{i=2}^N\lambda_i x_i;
		\end{equation*}
		and
		\begin{equation*}
		V_{i,x}:=x_i-M_x, \quad i=2,\dots,N, \quad \text{which implies}\quad \sum_{i=2}^N\lambda_iV_{i,x}=0.
		\end{equation*}
		Here $B_x,\, M_x$ and $V_{i,x}$ are the pointwise analogues of $B,\, M$ and $V_i$ associated with the tuple \((x_1,\dots,x_N)\).

\begin{remark}[Geometric interpretation of the decomposition]\label{rem:geometric-decomposition}
The decomposition \(T_i=M+V_i\), with \(\sum_{i=2}^N\lambda_iV_i=0\), is analogous to the center-of-mass and Jacobi-coordinate decomposition in classical mechanics. In the Wasserstein setting, the barycenter map \(B\) is the averaged coordinate, while the internal directions \((V_i)_{i\ge2}\) describe multi-marginal couplings with the same barycenter. The quadratic coercivity in Proposition \ref{prop:kantorovich-defect-lower} is the positive definiteness of the barycentric quadratic cost in these relative directions. Thus the stability loss comes from the averaged direction, namely the two-marginal barycenter map.
\end{remark}

		\begin{lemma}\label{lem:barycentric-decomposition}
			For $x_1\in S$, $x_2,\dots, x_N\in Y$, we have
			\begin{equation}
				c(x_1,\dots,x_N)=\frac{\lambda_1}{2(1-\lambda_1)}|B_x-x_1|^2+\frac12\sum_{i=2}^N\lambda_i|V_{i,x}|^2.
			\end{equation}
		\end{lemma}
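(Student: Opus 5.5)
The plan is to start from the first expression in \eqref{equal}, namely
\[
c(x_1,\dots,x_N)=\tfrac12\sum_{i=1}^N\lambda_i|x_i|^2-\tfrac12|B_x|^2,
\]
and rewrite the target-side part using the weighted variance identity. Set $\Lambda:=1-\lambda_1=\sum_{i=2}^N\lambda_i$. Since $M_x=\Lambda^{-1}\sum_{i\ge2}\lambda_ix_i$ is the $\lambda$-weighted mean of $x_2,\dots,x_N$, and $\sum_{i\ge2}\lambda_iV_{i,x}=0$, expanding $|x_i|^2=|M_x+V_{i,x}|^2$ kills the cross term. This gives
\[
\sum_{i=2}^N\lambda_i|x_i|^2=\Lambda|M_x|^2+\sum_{i=2}^N\lambda_i|V_{i,x}|^2 .
\]

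Substituting this, it remains to show that the external part
\[
\tfrac12\lambda_1|x_1|^2+\tfrac12\Lambda|M_x|^2-\tfrac12|\lambda_1x_1+\Lambda M_x|^2
\]
equals $\frac{\lambda_1}{2\Lambda}|B_x-x_1|^2$. This is a two-point version of the same identity. Expanding and using $\lambda_1-\lambda_1^2=\lambda_1\Lambda$ and $\Lambda-\Lambda^2=\lambda_1\Lambda$, the left-hand side becomes $\frac{\lambda_1\Lambda}{2}|x_1-M_x|^2$.

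The final step is the relation $B_x-x_1=\Lambda(M_x-x_1)$, which follows directly from $B_x=\lambda_1x_1+\Lambda M_x$. It gives
\[
\frac{\lambda_1\Lambda}{2}|x_1-M_x|^2=\frac{\lambda_1}{2\Lambda}|B_x-x_1|^2 ,
\]
and adding back $\tfrac12\sum_{i\ge2}\lambda_i|V_{i,x}|^2$ yields the lemma.

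There is no real obstacle here. The only care needed is to keep track of the normalization $\Lambda=1-\lambda_1$ when converting $|x_1-M_x|^2$ into $|B_x-x_1|^2$, since that factor determines the coefficient $\frac{\lambda_1}{2(1-\lambda_1)}$.
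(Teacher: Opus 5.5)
Your proof is correct and is essentially the paper's argument: you split $x_i=M_x+V_{i,x}$, cancel the cross term via $\sum_{i\ge2}\lambda_iV_{i,x}=0$, and reduce the remaining two-point part in $(x_1,M_x)$ with weights $(\lambda_1,1-\lambda_1)$ to $\frac{\lambda_1}{2(1-\lambda_1)}|B_x-x_1|^2$. The only cosmetic difference is the starting point: the paper expands the form $\sum_{i=1}^N\frac{\lambda_i}{2}|x_i-B_x|^2$ around $B_x$, whereas you start from the second-moment form in \eqref{equal}.
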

		\begin{proof}
			Since $V_{i,x}:=x_i-M_x$ for $ i=2,\dots,N$, by \eqref{equal}, we have
			\begin{equation}
				\begin{aligned}
					c(x_1,\dots,x_N)&=\sum_{i=1}^N\frac{\lambda_i}{2} |x_i-B_x|^2\\
					&=\frac{\lambda_1}{2} |x_1-B_x|^2+\sum_{i=2}^N\frac{\lambda_i}{2} |V_{i,x}+M_x-B_x|^2\\
					&\overset{*}{=}\frac{\lambda_1}{2} |x_1-B_x|^2+\sum_{i=2}^N\frac{\lambda_i}{2} |M_x-B_x|^2+\sum_{i=2}^N\frac{\lambda_i}{2} |V_{i,x}|^2\\
					&=\frac{\lambda_1}{2(1-\lambda_1)}|B_x-x_1|^2+\frac12\sum_{i=2}^N\lambda_i|V_{i,x}|^2,
				\end{aligned}
			\end{equation}
			where $(*)$ follows from $\sum_{i=2}^N\lambda_iV_{i,x}=0$.
		\end{proof}

\subsection{Separation of barycentric and internal directions}\label{subsec:separation}
		Let $(\varphi_1,\dots,\varphi_N)$ be a $c$-conjugate tuple of Kantorovich potentials for the multi-marginal optimal transport problem $(\rho,\mu_2,\dots,\mu_N)$. For $x_1\in S$, $x_2,\dots, x_N\in Y$, define
		\begin{equation*}
			R(x_1,\dots,x_N):=c(x_1,\dots,x_N)-\sum_{i=1}^{N}\varphi_i(x_i).
		\end{equation*}
		Since $(\varphi_1,\dots,\varphi_N)$ is $c$-conjugate, we have
		\begin{equation*}
			R(x_1,\dots,x_N)\geq 0,\quad \forall\, (x_1,\dots,x_N)\in S\times Y^{N-1}.
		\end{equation*}
		Moreover, by the standard duality theorem for marginal problems (cf. \cite{zbMATH03850154}), in the form used for
		multi-marginal optimal transport (cf. \cite{zbMATH01060715}, \cite{zbMATH06484002}), we have
		\begin{equation}\label{eq:defect-zero}
			R(x_1,T_2(x_1),\dots,T_N(x_1))=0, \quad \rho\text{-a.e. }x_1\in S.
		\end{equation}

		The following proposition is the key estimate in the proof of Theorem \ref{main}.
		\begin{proposition}\label{prop:kantorovich-defect-lower}
			 For $\rho$-a.e. $x_1\in S$ and every $x_2,\dots,x_N\in Y$, we have
			\begin{equation}\label{eq:kantorovich-defect-lower}
				R(x_1,x_2,\dots,x_N)\geq \frac{1}{4}\sum_{i=2}^N\lambda_i^2|V_{i,x}-V_i(x_1)|^2- \frac{3}{2}|B_x-B(x_1)|^2.
			\end{equation}
		\end{proposition}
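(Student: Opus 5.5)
The plan is to compare $R$ at an arbitrary tuple with $R$ at the optimal tuple $(x_1,T_2(x_1),\dots,T_N(x_1))$, where it vanishes by \eqref{eq:defect-zero}, and to eliminate the unknown potentials with one-slot competitors. Fix $x_1$ in the full-measure set where \eqref{eq:defect-zero} holds, and write $t_i:=T_i(x_1)$ and $d_i:=x_i-t_i$ for $i\ge 2$, with $d_1:=0$. For each $i\ge2$, let $\tau^{(i)}$ be the tuple $(x_1,t_2,\dots,t_N)$ with the $i$-th entry replaced by $x_i$. Since $R(\tau^{(i)})\ge 0$ and $R(x_1,t_2,\dots,t_N)=0$, subtracting the two expressions gives
\begin{equation*}
\varphi_i(x_i)-\varphi_i(t_i)\le c(\tau^{(i)})-c(x_1,t_2,\dots,t_N).
\end{equation*}
Substituting these $N-1$ bounds into $R(x_1,\dots,x_N)=R(x_1,\dots,x_N)-R(x_1,t_2,\dots,t_N)$ removes every potential. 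We obtain
\begin{equation*}
R(x_1,\dots,x_N)\ge \bigl[c(x)-c(t)\bigr]-\sum_{i=2}^N\bigl[c(\tau^{(i)})-c(t)\bigr],
\end{equation*}
where $t:=(x_1,t_2,\dots,t_N)$. This step uses only $c$-conjugacy and \eqref{eq:defect-zero}.

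Next, exploit that $c$ is an exact quadratic form by \eqref{equal}. Expanding $c(t+d)-c(t)$ gives a linear term $\sum_i\langle\nabla_{x_i}c(t),d_i\rangle$ plus the quadratic part
\begin{equation*}
Q(d)=\tfrac12\sum_{i=2}^N\lambda_i|d_i|^2-\tfrac12\Bigl|\sum_{i=2}^N\lambda_id_i\Bigr|^2.
\end{equation*}
The one-slot differences $c(\tau^{(i)})-c(t)$ have the same linear terms, each taken singly, plus $\tfrac12\lambda_i(1-\lambda_i)|d_i|^2$. Hence the linear terms cancel exactly, and
\begin{equation*}
R(x_1,\dots,x_N)\ge \tfrac12\sum_{i=2}^N\lambda_i^2|d_i|^2-\tfrac12|B_x-B(x_1)|^2,
\end{equation*}
where we used $\sum_{i\ge2}\lambda_id_i=B_x-B(x_1)$. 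This cancellation is the heart of the argument and the step to double-check carefully.

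Finally, convert $d_i$ to internal coordinates. Since $d_i=(V_{i,x}-V_i(x_1))+(M_x-M(x_1))$ and $M_x-M(x_1)=(B_x-B(x_1))/(1-\lambda_1)$, the elementary inequality $|a+b|^2\ge\tfrac12|a|^2-|b|^2$ gives
\begin{equation*}
\tfrac12\sum_{i=2}^N\lambda_i^2|d_i|^2\ge\tfrac14\sum_{i=2}^N\lambda_i^2|V_{i,x}-V_i(x_1)|^2-\frac{\sum_{i\ge2}\lambda_i^2}{2(1-\lambda_1)^2}|B_x-B(x_1)|^2.
\end{equation*}
Since $\sum_{i\ge2}\lambda_i^2\le(\sum_{i\ge2}\lambda_i)^2=(1-\lambda_1)^2$, the last coefficient is at most $\tfrac12$. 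The total loss in the $|B_x-B(x_1)|^2$ term is therefore at most $1\le\tfrac32$, which yields \eqref{eq:kantorovich-defect-lower}.

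The main point needing care, beyond the algebra, is the measure-theoretic one: the potentials must be finite at $t_i$ and at $x_i$. This holds because $c$-conjugate potentials are finite (indeed Lipschitz) on the compact sets $S$ and $Y$, and $t_i\in Y$.
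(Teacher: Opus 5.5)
Your proof is correct and is essentially the paper's argument. One-slot competitors, together with $c$-conjugacy and the vanishing of $R$ at $(x_1,T_2(x_1),\dots,T_N(x_1))$, eliminate the potentials. The exact quadratic expansion then cancels the linear terms and leaves $\tfrac12\sum_{i\ge2}\lambda_i^2|d_i|^2-\tfrac12|B_x-B(x_1)|^2$, which is the paper's $Q-\sum_{i\ge2}\tfrac{\lambda_i(1-\lambda_i)}{2}|\Delta x_i|^2$. The only difference is in the last step: the paper expands the cross term $\bigl\langle\sum_{i\ge2}\lambda_i^2(V_{i,x}-V_i(x_1)),B_x-B(x_1)\bigr\rangle$ and applies H\"older and Young, whereas you use $|a+b|^2\ge\tfrac12|a|^2-|b|^2$ index by index, which even improves the constant $\tfrac32$ to $1$.
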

		\begin{proof}
			Fix $x_1\in S$ such that
			\begin{equation}\label{eq:defect-zero-fixed-x}
				R(x_1,T_2(x_1),\dots,T_N(x_1))=0.
			\end{equation}
			Denote
			\begin{equation*}
				\Delta x_i:=x_i-T_i(x_1), \quad\Delta V_i:=V_{i,x}-V_i(x_1), \quad i=2,\dots,N.
			\end{equation*}
			Then we have
			\begin{equation*}
				\quad\Delta B:=B_x-B(x_1)=\sum_{i=2}^N\lambda_i\Delta x_i,\quad 	\sum_{i=2}^N\lambda_i\Delta V_i=0.
			\end{equation*}
			Moreover, note that
			\begin{equation}\label{eq:delta-v-decomposition}
				\Delta V_i=\Delta x_i-(M_x-M(x_1))=\Delta x_i-\frac{\Delta B}{1-\lambda_1}, \quad i=2,\dots,N,
			\end{equation}
			by Lemma \ref{lem:barycentric-decomposition} and the fact that $c$ is quadratic, we have the exact expansion
			\begin{equation}\label{eq:quadratic-cost-expansion}
				c(x_1,\dots,x_N)=c(x_1, T_2(x_1),\dots, T_N(x_1))+\sum_{i=2}^N \langle p_i, \Delta x_i\rangle+Q(\Delta B, \Delta V_i),
			\end{equation}
			where
			\begin{equation*}
				p_i:=\nabla_{x_i}c(x_1, T_2(x_1),\dots, T_N(x_1)), \quad Q(\Delta B, \Delta V_i):=\frac{\lambda_1}{2(1-\lambda_1)}|\Delta B|^2+\frac12\sum_{i=2}^N\lambda_i|\Delta V_i|^2.
			\end{equation*}

			On the other hand, for $i=2, \dots, N$,
			\begin{equation}\label{eq:one-coordinate-potential-bound}
				\begin{aligned}
				\varphi_i(x_i)
					&\leq c(x_1,\dots,T_{i-1}(x_1),x_i,T_{i+1}(x_1),\dots,T_N(x_1))\\
					&\quad -\varphi_1(x_1)-\sum_{\substack{2\leq j\leq N\\ j\neq i}}\varphi_j(T_j(x_1))\\
					&=c(x_1,\dots,T_i(x_1),\dots,T_N(x_1))+\langle p_i,\Delta x_i\rangle\\
					&\quad+\frac{\lambda_i(1-\lambda_i)}{2}|\Delta x_i|^2-\varphi_1(x_1)-\sum_{\substack{2\leq j\leq N\\ j\neq i}}\varphi_j(T_j(x_1))\\
					&\overset{\eqref{eq:defect-zero-fixed-x}}{=}\varphi_i(T_i(x_1))+\langle p_i,\Delta x_i\rangle+
					\frac{\lambda_i(1-\lambda_i)}{2}|\Delta x_i|^2.
					\end{aligned}
			\end{equation}

		   Thus,
		   	\begin{equation}
		   	\begin{aligned}
		   		&R(x_1,\dots,x_N)\\
		   			\overset{\eqref{eq:defect-zero-fixed-x}}{=}&R(x_1,\dots,x_N)-R(x_1,T_2(x_1),\dots,T_N(x_1))\\
		   		\geq& \frac{\lambda_1}{2(1-\lambda_1)}|\Delta B|^2+\frac{1}{2}\sum_{i=2}^N\lambda_i|\Delta V_i|^2-\sum_{i=2}^{N}\frac{\lambda_i(1-\lambda_i)}{2}|\Delta x_i|^2\\
		   		\overset{\eqref{eq:delta-v-decomposition}}{=}& \frac{1}{2}\sum_{i=2}^N\lambda_i^2|\Delta V_i|^2+\left(\frac{\sum_{i=2}^{N}\lambda_i^2}{2(1-\lambda_1)^2}-\frac{1}{2}\right)|\Delta B|^2-\frac{1}{1-\lambda_1}\sum_{i=2}^{N}\lambda_i(1-\lambda_i)\langle \Delta V_i, \Delta B\rangle\\
		   		\geq&  \frac{1}{2}\sum_{i=2}^N\lambda_i^2|\Delta V_i|^2-\frac{1}{2}|\Delta B|^2+\frac{1}{1-\lambda_1}\langle \sum_{i=2}^{N}\lambda_i^2\Delta V_i, \Delta B\rangle\\
		   		\overset{*}{\geq}&\frac{1}{2}\sum_{i=2}^N\lambda_i^2|\Delta V_i|^2-\frac{1}{2}|\Delta B|^2-\frac{|\Delta B|}{1-\lambda_1}\left(\sum_{i=2}^N\lambda_i^2\right)^{1/2}\left(\sum_{i=2}^N\lambda_i^2|\Delta V_i|^2\right)^{1/2}\\
		   		\overset{**}{\geq}& \frac{1}{4}\sum_{i=2}^N\lambda_i^2|\Delta V_i|^2-\frac{3}{2}|\Delta B|^2,
		   	\end{aligned}
		   \end{equation}
			where $(*)$ follows from H\"older's inequality
			\begin{equation*}
			\left|\langle \sum_{i=2}^{N}\lambda_i^2\Delta V_i, \Delta B\rangle\right|\leq |\Delta B|\left(\sum_{i=2}^N\lambda_i^2\right)^{1/2}\left(\sum_{i=2}^N\lambda_i^2|\Delta V_i|^2\right)^{1/2};
			\end{equation*}
			and $(**)$ follows from Young's inequality and the fact
			\begin{equation*}
				\left(\sum_{i=2}^N\lambda_i^2\right)^{1/2} \leq \sum_{i=2}^N\lambda_i=1-\lambda_1.
			\end{equation*}
		\end{proof}

\subsection{Gluing argument for perturbed marginals}\label{subsec:gluing}
		Denote
		\begin{equation*}
			D:=\diam(S\cup Y), 	\quad\delta:=\sum_{i=2}^N \lambda_i W_1(\mu_i,\tilde\mu_i)\leq D;
		\end{equation*}
		and denote
		\begin{equation*}
			\gamma:=(\Id,T_2,\dots,T_N)_\#\rho,\quad \tilde{\gamma}:=(\Id,\tilde T_2,\dots,\tilde T_N)_\#\rho.
		\end{equation*}

		\begin{lemma}\label{lem:gluing-coupling}
			There exists a coupling $\bar{\gamma}\in \Pi(\rho, \mu_2,\dots,\mu_N, \tilde{\mu}_2,\dots,\tilde{\mu}_N)$ with coordinates $(x_1, x_2,\dots, x_N, \tilde x_2,\dots,\tilde x_N)$, such that
			\begin{equation}\label{eq:gluing-marginals}
				(x_1,\tilde x_2,\dots,\tilde x_N)_\#\bar{\gamma}=\tilde{\gamma},\quad (x_i,\tilde{x}_i)_\#\bar{\gamma}=\pi_i,\quad i=2,\dots,N,
			\end{equation}
			where $\pi_i\in\Pi(\mu_i,\tilde{\mu}_i)$ is a $W_1$-optimal transport plan between $\mu_i$ and $\tilde{\mu}_i$.
		\end{lemma}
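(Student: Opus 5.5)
The plan is to build $\bar\gamma$ by the gluing lemma, applied iteratively along a star-shaped tree of couplings. The shared variables are $\tilde x_i$, and the hub is the multi-marginal plan $\tilde\gamma$.

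\emph{Step 1 (disintegrate $\tilde\gamma$).} Note that $\tilde\gamma=(\Id,\tilde T_2,\dots,\tilde T_N)_\#\rho$ lies in $\Pi(\rho,\tilde\mu_2,\dots,\tilde\mu_N)$, since $(\tilde T_i)_\#\rho=\tilde\mu_i$. For each $i=2,\dots,N$, fix a $W_1$-optimal plan $\pi_i\in\Pi(\mu_i,\tilde\mu_i)$; it exists by compactness of $Y$ and lower semicontinuity of the cost. Since $Y\subseteq\mathbb R^n$ is Polish, disintegrate each $\pi_i$ with respect to its second marginal:
\begin{equation*}
\d\pi_i(x_i,\tilde x_i)=\d\pi_i^{\tilde x_i}(x_i)\,\d\tilde\mu_i(\tilde x_i),
\end{equation*}
where $\tilde x_i\mapsto\pi_i^{\tilde x_i}\in\mathcal P(Y)$ is a Borel family.

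\emph{Step 2 (define the glued measure).} For a test function $f$ on $S\times Y^{N-1}\times Y^{N-1}$, set
\begin{equation*}
\int f\,\d\bar\gamma:=\int\left(\int f(x_1,x_2,\dots,x_N,\tilde x_2,\dots,\tilde x_N)\,\d\pi_2^{\tilde x_2}(x_2)\cdots\d\pi_N^{\tilde x_N}(x_N)\right)\d\tilde\gamma(x_1,\tilde x_2,\dots,\tilde x_N).
\end{equation*}
In words: conditionally on $(x_1,\tilde x_2,\dots,\tilde x_N)$, the $x_i$ are drawn independently from the kernels $\pi_i^{\tilde x_i}$. Measurability of the kernels makes this a well-defined probability measure.

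\emph{Step 3 (verify the marginals).} Taking $f$ independent of $x_2,\dots,x_N$ gives $(x_1,\tilde x_2,\dots,\tilde x_N)_\#\bar\gamma=\tilde\gamma$. Taking $f=g(x_i,\tilde x_i)$, the integrals over $x_j$ for $j\neq i$ equal $1$. Using $(\tilde x_i)_\#\tilde\gamma=\tilde\mu_i$, the remaining integral is $\int g\,\d\pi_i^{\tilde x_i}\,\d\tilde\mu_i(\tilde x_i)=\int g\,\d\pi_i$. It follows that the $x_1$-marginal is $\rho$, the $x_i$-marginal is $\mu_i$, and the $\tilde x_i$-marginal is $\tilde\mu_i$, so $\bar\gamma\in\Pi(\rho,\mu_2,\dots,\mu_N,\tilde\mu_2,\dots,\tilde\mu_N)$ and \eqref{eq:gluing-marginals} holds.

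The only slightly delicate step is the measurability of the disintegration kernels, which is standard on Polish spaces. Otherwise the argument is routine. Alternatively, one could apply the two-measure gluing lemma $N-1$ times in succession, each time gluing along the coordinate $\tilde x_i$; this yields the same conclusion.
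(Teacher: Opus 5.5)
Your construction is correct and is essentially the paper's argument. The paper simply applies the gluing lemma repeatedly along the shared coordinates $\tilde x_i$. Your measure, in which the $x_i$ are drawn conditionally independently from the disintegration kernels $\pi_i^{\tilde x_i}$ given $\tilde\gamma$, is what that iterated gluing produces, written out in a single step.
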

		\begin{proof}
		This follows by applying the gluing lemma repeatedly (cf. \cite[Lemma 2.1]{zbMATH08183830}).
		\end{proof}

		\begin{corollary}\label{cor:gluing-estimates}
				We have
			\begin{equation*}
				\left(\int |B_x-\tilde{B}(x_1)|^2\,\d\bar{\gamma}\right)^{1/2}\leq D^{1/2}\delta^{1/2}, \quad \left(\int 	\sum_{i=2}^N\lambda_i^2|V_{i,x}-\tilde V_i(x_1)|^2\,\d\bar\gamma\right)^{1/2}\leq D^{1/2}\delta^{1/2}.
			\end{equation*}
			Here $B_x$ and $V_{i,x}$ are associated with the coordinates $(x_1,x_2,\dots,x_N)$.
		\end{corollary}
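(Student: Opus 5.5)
The plan is to read both integrands on the support of the glued plan $\bar\gamma$ from Lemma \ref{lem:gluing-coupling}. There each difference is a weighted combination of the increments $a_i:=x_i-\tilde x_i$. Each such quantity is then bounded pointwise by $D\sum_{i=2}^N\lambda_i|a_i|$, and integrating this gives exactly $D\delta$ by $W_1$-optimality of the $\pi_i$.

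\emph{Step 1 (identifying $\tilde T_i$ on the support).} By \eqref{eq:gluing-marginals}, $(x_1,\tilde x_2,\dots,\tilde x_N)_\#\bar\gamma=\tilde\gamma=(\Id,\tilde T_2,\dots,\tilde T_N)_\#\rho$. Hence $\tilde x_i=\tilde T_i(x_1)$ for $\bar\gamma$-a.e.\ point. This yields
\[
\tilde B(x_1)=\lambda_1x_1+\sum_{i=2}^N\lambda_i\tilde x_i,\qquad \tilde M(x_1)=\frac{1}{1-\lambda_1}\sum_{i=2}^N\lambda_i\tilde x_i,\qquad \tilde V_i(x_1)=\tilde x_i-\tilde M(x_1).
\]
Consequently, $\bar\gamma$-a.e.,
\[
B_x-\tilde B(x_1)=\sum_{i=2}^N\lambda_i a_i,\qquad V_{i,x}-\tilde V_i(x_1)=a_i-m,\qquad m:=\sum_{j=2}^N w_ja_j,\quad w_j:=\frac{\lambda_j}{1-\lambda_1}.
\]

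\emph{Step 2 (pointwise bounds).} Since $x_i,\tilde x_i\in Y$, we have $|a_i|\le D$.

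For the external term, the triangle inequality and $\sum_{i\ge2}\lambda_i\le1$ give
\[
\Big|\sum_{i\ge2}\lambda_i a_i\Big|^2\le\Big(\sum_{i\ge2}\lambda_i|a_i|\Big)^2\le D\sum_{i\ge2}\lambda_i|a_i|.
\]

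For the internal term, use $\lambda_i\le 1-\lambda_1$ and then the variance inequality for the probability weights $w_i$ (the weighted mean $m$ minimizes $z\mapsto\sum_i w_i|a_i-z|^2$, so comparing with $z=0$). This gives
\[
\sum_{i\ge2}\lambda_i^2|a_i-m|^2\le(1-\lambda_1)^2\sum_{i\ge2}w_i|a_i-m|^2\le(1-\lambda_1)^2\sum_{i\ge2}w_i|a_i|^2=(1-\lambda_1)\sum_{i\ge2}\lambda_i|a_i|^2.
\]
Since $|a_i|\le D$ and $1-\lambda_1\le1$, the right-hand side is at most $D\sum_{i\ge2}\lambda_i|a_i|$.

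\emph{Step 3 (integration).} By \eqref{eq:gluing-marginals}, $(x_i,\tilde x_i)_\#\bar\gamma=\pi_i$ is $W_1$-optimal. Therefore
\[
\int|a_i|\,\d\bar\gamma=\int|x_i-\tilde x_i|\,\d\pi_i=W_1(\mu_i,\tilde\mu_i).
\]
Integrating both pointwise bounds from Step 2 against $\bar\gamma$ thus gives $D\sum_{i\ge2}\lambda_iW_1(\mu_i,\tilde\mu_i)=D\delta$. Taking square roots yields both claimed inequalities.

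The only step requiring care is the internal bound. One has to bound the centered quantity by the uncentered one through the variance inequality, and absorb $\lambda_i^2$ as $\lambda_i(1-\lambda_1)\cdot w_i$-type factors, so that no constant depending on $N$ or on the weights appears.
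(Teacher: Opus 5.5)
Your proposal is correct and follows essentially the paper's proof. Like the paper, you identify $\tilde x_i=\tilde T_i(x_1)$ $\bar\gamma$-a.e., bound the internal term using $\lambda_i^2\le\lambda_i$ (you use the slightly sharper $\lambda_i^2\le(1-\lambda_1)\lambda_i$) together with the weighted-variance identity, and close with $|x_i-\tilde x_i|^2\le D|x_i-\tilde x_i|$ and the $W_1$-optimality of $\pi_i$. The only difference is cosmetic: for the barycentric term you argue pointwise via $(\sum_i\lambda_i|a_i|)^2\le D\sum_i\lambda_i|a_i|$, whereas the paper applies Minkowski's inequality in $L^2(\bar\gamma)$ and then Cauchy--Schwarz to $\sum_i\lambda_iW_1^{1/2}(\mu_i,\tilde\mu_i)$.
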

		\begin{proof}
			Since
			\begin{equation*}
				(x_1,\tilde x_2,\dots,\tilde x_N)_\#\bar\gamma=\tilde\gamma=(\Id,\tilde T_2,\dots,\tilde T_N)_\#\rho,
			\end{equation*}
			we have
			\begin{equation*}
				\tilde x_i=\tilde T_i(x_1), \quad \bar\gamma\text{-a.e.}, \quad i=2,\dots,N.
			\end{equation*}
		\textbf{Estimate of the barycentric part:}
	    By the definitions of $B_x$ and $\tilde{B}(x_1)$, we have
		\begin{equation}
\begin{aligned}
    \left(\int |B_x-\tilde{B}(x_1)|^2\,\d\bar{\gamma}\right)^{1/2}
    &=\left(\int \left|\sum_{i=2}^{N}\lambda_i (x_i-\tilde{T}_i(x_1))\right|^2\,\d\bar{\gamma}\right)^{1/2}\\
    &\leq \sum_{i=2}^N\lambda_i\left(\int |x_i-\tilde x_i|^2\,\d\bar\gamma\right)^{1/2}\\
    &\overset{\eqref{eq:gluing-marginals}}{\leq} D^{1/2}\sum_{i=2}^N\lambda_i W_1^{1/2}(\mu_i,\tilde{\mu}_i)\\
&\le D^{1/2}\left(\sum_{i=2}^N\lambda_i W_1(\mu_i,\tilde\mu_i)\right)^{1/2}
=D^{1/2}\delta^{1/2}.
\end{aligned}
\end{equation}
\medskip

		\textbf{Estimate of the internal part:} By the definitions of $V_{i,x}$ and $\tilde V_i(x_1)$, we have
		\begin{equation}
			V_{i,x}-\tilde V_i(x_1)=(x_i-\tilde{T}_i(x_1))-\frac{1}{1-\lambda_1}\sum_{j=2}^{N}\lambda_j(x_j-\tilde{T}_j(x_1)).
		\end{equation}
		Using $\lambda_i<1$ for $i=2,\dots,N$, we have
		\begin{equation}
			\begin{aligned}
					\sum_{i=2}^N\lambda_i^2|V_{i,x}-\tilde V_i(x_1)|^2\leq&\sum_{i=2}^N\lambda_i|V_{i,x}-\tilde V_i(x_1)|^2\\
					 =&\sum_{i=2}^{N}\lambda_i|x_i-\tilde{T}_i(x_1)|^2-\frac{1}{1-\lambda_1}\left|\sum_{i=2}^{N}\lambda_i(x_i-\tilde{T}_i(x_1))\right|^2\\
					 \leq& \sum_{i=2}^{N}\lambda_i|x_i-\tilde{T}_i(x_1)|^2.
			\end{aligned}
		\end{equation}
		Integrating with respect to $\bar{\gamma}$ implies
		\begin{equation}
				\int\sum_{i=2}^N\lambda_i^2|V_{i,x}-\tilde V_i(x_1)|^2\,\d\bar\gamma\leq
			\sum_{i=2}^N\lambda_i\int |x_i-\tilde x_i|^2\,\d\bar\gamma\overset{\eqref{eq:gluing-marginals}}{\leq}D\sum_{i=2}^N\lambda_iW_1(\mu_i,\tilde\mu_i)=D\delta,
		\end{equation}
	     which proves the estimates.
		\end{proof}

			Denote
		\begin{equation*}
			\hat{\gamma}:=(x_1, x_2,\dots, x_N)_\#\bar{\gamma}\in \Pi(\rho,\mu_2, \dots,\mu_N).
		\end{equation*}

		\begin{lemma}\label{cost-difference}
			We have
			\begin{equation}\label{eq:cost-difference-bound}
				\left|\int c(x_1,\dots,x_N)\,\d\hat{\gamma}-\int c(x_1,\dots,x_N)\,\d\gamma\right|\leq 2D\delta.
			\end{equation}
		\end{lemma}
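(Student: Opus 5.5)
Since $\gamma$ and $\hat\gamma$ both lie in $\Pi(\rho,\mu_2,\dots,\mu_N)$ and $\gamma$ is optimal, we have $\int c\,\d\gamma\le\int c\,\d\hat\gamma$. So we only need the one-sided bound $\int c\,\d\hat\gamma-\int c\,\d\gamma\le 2D\delta$. The plan is to split this as
\begin{equation*}
\int c\,\d\hat\gamma-\int c\,\d\gamma=\Big(\int c\,\d\hat\gamma-\int c\,\d\tilde\gamma\Big)+\Big(\int c\,\d\tilde\gamma-\int c\,\d\gamma\Big)
\end{equation*}
and bound each bracket by $D\delta$.

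\textbf{Lipschitz bound for $c$.} By \eqref{equal} and the envelope formula, $\nabla_{x_i}c(x_1,\dots,x_N)=\lambda_i(x_i-B_x)$. Both $x_i$ and $B_x$ lie in the convex set $\Omega$, and $\diam\Omega=D$, so $|\nabla_{x_i}c|\le\lambda_i D$ on $\Omega^N$. Integrating along the segment between two tuples, which stays in the convex set $\Omega^N$, gives
\begin{equation*}
|c(x_1,x_2,\dots,x_N)-c(x_1,\tilde x_2,\dots,\tilde x_N)|\le D\sum_{i=2}^N\lambda_i|x_i-\tilde x_i|.
\end{equation*}

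\textbf{First bracket.} Integrate this inequality against $\bar\gamma$ from Lemma \ref{lem:gluing-coupling}. Its $(x_1,x_2,\dots,x_N)$-marginal is $\hat\gamma$ and its $(x_1,\tilde x_2,\dots,\tilde x_N)$-marginal is $\tilde\gamma$. Each $(x_i,\tilde x_i)$-marginal is the $W_1$-optimal plan $\pi_i$, so $\int|x_i-\tilde x_i|\,\d\bar\gamma=W_1(\mu_i,\tilde\mu_i)$. Hence
\begin{equation*}
\int c\,\d\hat\gamma-\int c\,\d\tilde\gamma\le D\sum_{i=2}^N\lambda_iW_1(\mu_i,\tilde\mu_i)=D\delta.
\end{equation*}

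\textbf{Second bracket.} Apply the gluing lemma again with the roles of the two problems exchanged. This gives a coupling $\bar\gamma'$ whose $(x_1,x_2,\dots,x_N)$-marginal is $\gamma$ and whose $(x_i,\tilde x_i)$-marginals are $\pi_i$. Its $(x_1,\tilde x_2,\dots,\tilde x_N)$-marginal $\hat\gamma'$ then lies in $\Pi(\rho,\tilde\mu_2,\dots,\tilde\mu_N)$. Optimality of $\tilde\gamma$ (Gangbo--\'Swi\c{e}ch, $\tilde\gamma$ being induced by the optimal maps $\tilde T_i$) gives $\int c\,\d\tilde\gamma\le\int c\,\d\hat\gamma'$. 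The same Lipschitz estimate integrated against $\bar\gamma'$ gives $\int c\,\d\hat\gamma'-\int c\,\d\gamma\le D\delta$.

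Adding the two brackets yields $0\le\int c\,\d\hat\gamma-\int c\,\d\gamma\le 2D\delta$, which is \eqref{eq:cost-difference-bound}. The only point needing care is the second bracket, since it requires the symmetric gluing construction; that construction is the same repeated application of the gluing lemma as in Lemma \ref{lem:gluing-coupling}. The Lipschitz constant itself relies on convexity of $\Omega$ and is routine.
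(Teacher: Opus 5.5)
Your proposal is correct and follows the paper's proof essentially step for step. It uses the same gradient bound $|\nabla_{x_i}c|\le\lambda_i D$ to get the Lipschitz estimate, compares $\hat\gamma$ with $\tilde\gamma$ through $\bar\gamma$, and uses the symmetric gluing (the paper's $\underline{\gamma}$ and $\check{\gamma}$) together with optimality of $\gamma$ and $\tilde\gamma$. Your observation that the segment between the two tuples stays in the convex set $\Omega^N$ justifies the Lipschitz step slightly more carefully than the paper's appeal to the triangle inequality, since $Y$ itself need not be convex.
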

		\begin{proof}
			Recall that $\Omega=\overline{\operatorname{conv}(S\cup Y)}$. Since $\diam (\Omega)=D$, for any $x_1\in S, x_2,\dots,x_N\in Y$, we have
			\begin{equation}
				|\nabla_{x_i}c(x_1,\dots,x_N)|=|\lambda_i(x_i-B_x)|\leq \lambda_i \diam(\Omega)=\lambda_i D,\quad i=2,\dots,N.
			\end{equation}
			Then for $(x_2,\dots,x_N), (\tilde{x}_2, \dots,\tilde{x}_N)\in Y^{N-1}$, by the triangle inequality, we have
			\begin{equation}\label{eq:cost-lipschitz}
				\left|c(x_1, x_2,\dots,x_N)-c(x_1,\tilde{x}_2,\dots,\tilde{x}_N)\right|\leq D \sum_{i=2}^{N}\lambda_i |x_i-\tilde{x}_i|.
			\end{equation}
			Thus,
			\begin{equation}\label{eq:cost-difference-vs-tilde}
				\begin{aligned}
						&\left|\int c(x_1,\dots,x_N)\,\d\hat{\gamma}-\int c(x_1,\dots,x_N)\,\d\tilde\gamma\right|\\
						\overset{\eqref{eq:gluing-marginals}}{=} &\left|\int c(x_1, x_2,\dots,x_N)-c(x_1,\tilde{x}_2,\dots,\tilde{x}_N) \,\d\bar{\gamma}\right|\\
						\overset{\eqref{eq:cost-lipschitz}}{\leq} &D \sum_{i=2}^{N}\lambda_i\int |x_i-\tilde{x}_i|\,\d\bar{\gamma}
						\overset{\eqref{eq:gluing-marginals}}{=} D \delta.
				\end{aligned}
			\end{equation}
			Since $\gamma$ is the optimal transport plan of $(\rho,\mu_2,\dots,\mu_N)$, we have
			\begin{equation}\label{eq:cost-upper-from-gamma}
				\int c(x_1,\dots,x_N)\,\d\gamma\leq \int c(x_1,\dots,x_N)\,\d\hat{\gamma}\leq \int c(x_1,\dots,x_N)\,\d\tilde\gamma+D\delta.
			\end{equation}

			By the same argument with the two target tuples interchanged, there also exists a coupling $\underline{\gamma}\in \Pi(\rho, \mu_2,\dots,\mu_N, \tilde{\mu}_2,\dots,\tilde{\mu}_N)$ with coordinates $(x_1, x_2,\dots, x_N, \tilde x_2,\dots,\tilde x_N)$, such that
			\begin{equation*}
				(x_1, x_2,\dots, x_N)_\#\underline{\gamma}=\gamma,\quad (x_i,\tilde{x}_i)_\#\underline{\gamma}=\pi_i,\quad i=2,\dots,N.
			\end{equation*}
			Denote
			\begin{equation*}
				\check{\gamma}:=(x_1, \tilde x_2,\dots, \tilde x_N)_\#\underline{\gamma}\in \Pi(\rho,\tilde\mu_2, \dots,\tilde\mu_N).
			\end{equation*}
			Similarly, we have
			\begin{equation}\label{eq:cost-upper-from-tilde}
				\int c(x_1,\dots,x_N)\,\d\tilde\gamma\leq \int c(x_1,\dots,x_N)\,\d\check{\gamma}\leq \int c(x_1,\dots,x_N)\,\d\gamma+D\delta.
			\end{equation}

			Combining \eqref{eq:cost-difference-vs-tilde}, \eqref{eq:cost-upper-from-gamma} and \eqref{eq:cost-upper-from-tilde} completes the proof.
		\end{proof}

\subsection{Proof of Theorem \ref{main}}\label{subsec:proof-main}

		\begin{proof}[Proof of Theorem \ref{main}]
			Integrating \eqref{eq:kantorovich-defect-lower} with respect to $\hat{\gamma}$ gives
			\begin{equation}\label{eq:integrated-defect-lower}
				\frac{1}{4}\int \sum_{i=2}^N\lambda_i^2|V_{i,x}-V_i(x_1)|^2\,\d\hat{\gamma}-\frac{3}{2} \int |B_x-B(x_1)|^2\,\d\hat{\gamma}\leq \int R(x_1,x_2,\dots,x_N)\,\d\hat{\gamma}.
			\end{equation}
			Note that
			\begin{equation}\label{eq:defect-cost-difference}
			\begin{aligned}
					\int R(x_1,x_2,\dots,x_N)\,\d\hat{\gamma}&=\int c(x_1,\dots,x_N)\,\d\hat{\gamma}-\sum_{i=1}^{N}\int \varphi_i(x_i)\,\d\hat{\gamma}\\
					&=\int c(x_1,\dots,x_N)\,\d\hat{\gamma}-\int \varphi_1(x_1)\,\d\rho-\sum_{i=2}^{N}\int \varphi_i(x_i)\,\d\mu_i\\
					&\overset{\eqref{eq:defect-zero}}{=}\int c(x_1,\dots,x_N)\,\d\hat{\gamma}-\int c(x_1,\dots,x_N)\,\d{\gamma}\overset{\eqref{eq:cost-difference-bound}}{\leq}2D\delta.
			\end{aligned}
			\end{equation}
			Moreover, by Corollary \ref{cor:gluing-estimates}, we have
			\begin{equation}\label{eq:barycenter-mode-triangle}
			\begin{aligned}
				\left(\int |B_x-B(x_1)|^2\,\d\hat{\gamma}\right)^{1/2}&\leq 	\left(\int |B_x-\tilde{B}(x_1)|^2\,\d\bar{\gamma}\right)^{1/2}+	\left(\int |\tilde{B}(x_1)-B(x_1)|^2\,\d\bar{\gamma}\right)^{1/2}\\
				&\leq D^{1/2}\delta^{1/2}+\|B-\tilde{B}\|_{L^2(\rho)}.
			\end{aligned}
			\end{equation}
		Thus, by \eqref{eq:integrated-defect-lower}, \eqref{eq:defect-cost-difference}, \eqref{eq:barycenter-mode-triangle} and Corollary \ref{cor:gluing-estimates}, we have
			\begin{equation}
			\begin{aligned}
					&\left(\int \sum_{i=2}^N\lambda_i^2|V_i(x_1)-\tilde{V}_i(x_1)|^2\,\d\rho\right)^{1/2}=\left(\int \sum_{i=2}^N\lambda_i^2|V_i(x_1)-\tilde{V}_i(x_1)|^2\,\d\bar{\gamma}\right)^{1/2}\\
					\leq&\left(\int \sum_{i=2}^N\lambda_i^2|V_i(x_1)-V_{i,x}|^2\,\d\hat{\gamma}\right)^{1/2}+\left(\int \sum_{i=2}^N\lambda_i^2|V_{i,x}-\tilde{V}_i(x_1)|^2\,\d\bar{\gamma}\right)^{1/2}\\
					\leq&\left(6\left(D^{1/2}\delta^{1/2}+\|B-\tilde{B}\|_{L^2(\rho)}\right)^2+8D\delta\right)^{1/2}+D^{1/2}\delta^{1/2}\\
					\leq & C_1 \left(\delta^{1/2}+\|B-\tilde{B}\|_{L^2(\rho)}\right),
			\end{aligned}
			\end{equation}
			where $C_1$ depends only on $D$.

			On the other hand, by the definitions of $V_i, \tilde{V}_i, B$, and $\tilde{B}$, we have
			\begin{equation}
				T_i(x_1)-\tilde{T}_i(x_1)= (V_i(x_1)-\tilde{V}_i(x_1))+\frac{1}{1-\lambda_1}(B(x_1)-\tilde{B}(x_1)), \quad i=2,\dots,N.
			\end{equation}

			Therefore, by H\"older's inequality,
			\begin{equation}
				\begin{aligned}
						\sum_{i=2}^N\lambda_i\|T_i-\tilde T_i\|_{L^2(\rho)}
					 \leq&\sum_{i=2}^N\lambda_i\|V_i-\tilde{V}_i\|_{L^2(\rho)}+\frac{1}{1-\lambda_1}\sum_{i=2}^N\lambda_i\|B-\tilde{B}\|_{L^2(\rho)}\\
					 \leq&(N-1)^{1/2}\left(\sum_{i=2}^N\lambda_i^2\|V_i-\tilde{V}_i\|^2_{L^2(\rho)}\right)^{1/2}+\|B-\tilde{B}\|_{L^2(\rho)}\\
						\leq& C\left(\left(\sum_{i=2}^N \lambda_iW_1(\mu_i,\tilde\mu_i)\right)^{1/2}+\|B-\tilde B\|_{L^2(\rho)}\right),
				\end{aligned}
			\end{equation}
			where $C$ depends on $\diam(S\cup Y)$ and $N$. This completes the proof.
		\end{proof}

\subsection{Proof of Theorem \ref{multi-marginal}}\label{subsec:proof-multi-marginal}
We combine Theorem \ref{main} with the following estimate from \cite[Theorem 2.2]{merigot:hal-05616391}.
		\begin{proposition}
				Let $S,\, Y\subseteq \R^n$ be compact and let $\Omega=\overline{\operatorname{conv}(S\cup Y)}$. Let $\rho\in\mathcal P_{\mathrm{ac}}(S)\cap L^\infty(S)$. Then there exists a constant $C_2>0$, depending on $\diam(S\cup Y)$, $n$, and $\|\rho\|_{L^\infty}$, such that for any $\mu,\tilde{\mu}\in\mathcal{P}(\Omega)$,
				\begin{equation}
					\|\nabla\phi_\mu-\nabla\phi_{\tilde{\mu}}\|_{L^2(\rho)}^4\leq C_2\int_\Omega (\psi_{\mu}-\psi_{\tilde{\mu}})\,\d (\mu-\tilde{\mu}),
				\end{equation}
			where $\psi_{\mu}$ and $\psi_{\tilde{\mu}}$ are the Kantorovich potentials from $\mu$ to $\rho$ and $\tilde{\mu}$ to $\rho$, respectively, for the quadratic cost; $\phi_{\mu}:=\psi^c_{\mu}$ and $\phi_{\tilde{\mu}}:=\psi^c_{\tilde{\mu}}$ are the corresponding dual Kantorovich potentials.
		\end{proposition}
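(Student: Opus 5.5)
This proposition is quoted from \cite[Theorem 2.2]{merigot:hal-05616391}, and in the paper it is simply invoked. If I had to reconstruct it, I would follow the strong-convexity route for the Kantorovich functional of Delalande--M\'erigot, refined to the sharp power. The steps below are a sketch, and I have not checked them in detail.

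\textbf{Step 1: convex reformulation.} Put $u:=\frac12|x|^2-\phi_\mu$ and $\tilde u:=\frac12|x|^2-\phi_{\tilde\mu}$. These are convex, Lipschitz with constant $D$ on $\Omega$, and satisfy $(\nabla u)_\#\rho=\mu$ and $(\nabla\tilde u)_\#\rho=\tilde\mu$. The potentials $\psi_\mu,\psi_{\tilde\mu}$ correspond, up to the quadratic term, to the Legendre transforms $u^*,\tilde u^*$.

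\textbf{Step 2: write the dual quantity as a sum of Fenchel--Young gaps.} Change variables through $\nabla u$ and $\nabla\tilde u$, and use the equality case $u(x)+u^*(\nabla u(x))=\langle x,\nabla u(x)\rangle$, valid $\rho$-a.e. This gives
\begin{equation*}
\int_\Omega(\psi_\mu-\psi_{\tilde\mu})\,\d(\mu-\tilde\mu)=\int_S\big(G_u(x,\nabla\tilde u(x))+G_{\tilde u}(x,\nabla u(x))\big)\,\d\rho(x),
\end{equation*}
where
\begin{equation*}
G_u(x,y):=u(x)+u^*(y)-\langle x,y\rangle\ge0 .
\end{equation*}
This reduces the claim to the lower bound
\begin{equation*}
\int_S\big(G_u(x,\nabla\tilde u)+G_{\tilde u}(x,\nabla u)\big)\,\d\rho\ \gtrsim\ \|\nabla u-\nabla\tilde u\|_{L^2(\rho)}^4 .
\end{equation*}

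\textbf{Step 3: pointwise lower bound on the gap.} Write
\begin{equation*}
G_u(x,y)=\sup_z\big[\langle z-x,y\rangle-(u(z)-u(x))\big].
\end{equation*}
Test with $z=x+t e$, where $e$ is the unit vector along $y-\nabla u(x)$. Using only convexity of $u$ along the segment, $G_u(x,y)$ is bounded below by the area between $t|y-\nabla u(x)|$ and the increment of the directional derivative $\partial_e u$ on that segment. So a large gap is forced unless $\nabla u$ varies by about $|y-\nabla u(x)|$ within distance $t$ of $x$.

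\textbf{Step 4: integrate and optimize.} Average Step 3 over $x$ against $\rho$ and use $\rho\le\|\rho\|_{L^\infty}$ to compare with Lebesgue measure. Oscillation of $\nabla u$ at scale $t$ costs a quantity controlled by the total variation of $\nabla u$, which is bounded by $C D^{n-1}$ for convex Lipschitz functions on a compact set. Balancing the two regimes in $t$ should give
\begin{equation*}
\int_S G\,\d\rho\gtrsim \|\nabla u-\nabla\tilde u\|_{L^2}^2\cdot\|\nabla u-\nabla\tilde u\|_{L^2}^2,
\end{equation*}
with constants depending on $D$, $n$ and $\|\rho\|_{L^\infty}$. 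This is the power $4$.

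\textbf{Main obstacle.} I expect Step 4 to be the crux, in particular obtaining exactly the exponent $4$ rather than the earlier $5$ or $6$. The bounded-variation control of $\nabla u$ must be combined with the symmetrized gap $G_u+G_{\tilde u}$ so that no power is lost; I would first try a slicing argument along lines in the direction of $\nabla u-\nabla\tilde u$.
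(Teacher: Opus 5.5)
The paper gives no proof of this statement: it is quoted verbatim from \cite[Theorem 2.2]{merigot:hal-05616391}, so your reconstruction has to stand on its own. Steps 1--3 are fine. Since both Fenchel--Young gaps in Step 2 are nonnegative, you can discard $G_{\tilde u}$; the symmetrization plays no role in the exponent.

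There is a genuine gap in Step 4. It carries the whole content of the proposition and is only asserted. The plan you sketch for it also does not work as stated, for two reasons:
\begin{itemize}
\item The direction $e(x)$ of $\nabla\tilde u(x)-\nabla u(x)$ varies with $x$. A slicing/Fubini bound of $\int_S\bigl(\partial_e u(x+te)-\partial_e u(x)\bigr)\,\d x$ by the total variation of $\nabla u$ is only available for a fixed direction $e$.
\item Set $\epsilon:=|\nabla\tilde u-\nabla u|$. Balancing $t$ after a Chebyshev restriction to a level set $\{\epsilon\gtrsim\|\epsilon\|_{L^2(\rho)}\}$ only yields the power $6$, not $4$.
\end{itemize}

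Here is what closes the argument. Fix unit vectors $e_1,\dots,e_m$, $m=m(n)$, such that every unit vector $e$ has $\langle e,e_j\rangle\ge\frac12$ for some $j$. In Step 3, test with $z=x+te_{j(x)}$ rather than $x+te(x)$. This is legitimate because $u=\frac12|\cdot|^2-\phi_\mu$ is convex on all of $\R^n$ with subgradients in $\Omega$, so $z$ may leave $S$. Monotonicity of $s\mapsto\partial_{e_j}u(x+se_j)$ gives, for all $t>0$ and $\rho$-a.e.\ $x$,
\[
G_u\bigl(x,\nabla\tilde u(x)\bigr)\ \ge\ t\Bigl(\tfrac12\,\epsilon(x)-\sum_{j=1}^{m}\omega_j(x,t)\Bigr),\qquad
\omega_j(x,t):=\partial^+_{e_j}u(x+te_j)-\partial_{e_j}u(x)\ge 0 .
\]

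On every line parallel to $e_j$, $\partial_{e_j}u$ is nondecreasing with values in an interval of length at most $D$. Hence the integral of $\omega_j(\cdot,t)$ over any segment of that line is at most $tD$, and by Fubini $\int_S\omega_j(x,t)\,\d x\le C_n\,tD^n$.

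Integrating against $\rho\le\|\rho\|_{L^\infty}$ gives, for every $t>0$,
\[
\int_S G_u(x,\nabla\tilde u)\,\d\rho\ \ge\ \tfrac t2\|\epsilon\|_{L^1(\rho)}-C_n m\|\rho\|_{L^\infty}D^n t^2 .
\]
Optimizing in $t$ yields $\|\epsilon\|_{L^1(\rho)}^2\le C(n)\|\rho\|_{L^\infty}D^n\int_S G_u(x,\nabla\tilde u)\,\d\rho$. Combined with $\|\epsilon\|_{L^2(\rho)}^4\le D^2\|\epsilon\|_{L^1(\rho)}^2$, this gives the claim with $C_2=C(n)D^{n+2}\|\rho\|_{L^\infty}$.

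So the exact power $4$ comes from a single scale, provided the balancing is done in $L^1$ rather than on a level set.
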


		    \begin{proof}[Proof of Theorem \ref{multi-marginal}]
			By Proposition \ref{prop:barycentermap}, $\mathbb P$ and $\tilde{\mathbb P}$ admit unique Wasserstein barycenters $\mu_{\mathbb P}, \, \mu_{\tilde{\mathbb P}}\in\mathcal{P}(\Omega)$. Moreover, by the global zero-order balance condition (cf. \cite{zbMATH05956494}, \cite[Proposition 1.1]{zbMATH07819904}), there exist Kantorovich potentials $\{\psi_{\mu_{{\mathbb P}}}^i\}_{i=1}^N$ such that
			\begin{equation}\label{eq:barycenter-potential-balance}
				\sum_{i=1}^N\lambda_i\psi_{\mu_{{\mathbb P}}}^i(y)=0, \quad \forall\, y\in\Omega,
			\end{equation}
			where $\psi_{\mu_{{\mathbb P}}}^i$ is the Kantorovich potential from $\mu_{\mathbb P}$ to $\mu_i$ for $i=1,\dots,N$
			with $\mu_1=\rho$.
			Similarly, there exist Kantorovich potentials $\{\psi_{\mu_{\tilde{\mathbb P}}}^i\}_{i=1}^N$ satisfying the analogue of \eqref{eq:barycenter-potential-balance}, where $\psi_{\mu_{\tilde{\mathbb P}}}^i$ is the Kantorovich potential from $\mu_{\tilde{\mathbb P}}$ to $\tilde{\mu}_i$ with $\tilde{\mu}_1=\rho$.

			For $\mu_i, \,\tilde{\mu}_i\in\mathcal{P}(\Omega)$, $i=1,\dots,N$, denote
				\begin{equation}\label{2.30}
				D_{{\mu}_i}(\mu_{\tilde{\mathbb P}},\mu_{{\mathbb P}}):=\frac{1}{2}W_2^2(\mu_{\tilde{\mathbb P}},{\mu}_i)-\frac{1}{2}W_2^2(\mu_{{\mathbb P}}, {\mu}_i)-\int_\Omega \psi_{\mu_{{\mathbb P}}}^i\,\d (\mu_{\tilde{\mathbb P}}-\mu_{{\mathbb P}}),
			\end{equation}
			and
			\begin{equation}\label{2.31}
			D_{\tilde{\mu}_i}(\mu_{{\mathbb P}},\mu_{\tilde{\mathbb P}}):=\frac{1}{2}W_2^2(\mu_{{\mathbb P}}, \tilde{\mu}_i)-\frac{1}{2}W_2^2(\mu_{\tilde{\mathbb P}}, \tilde{\mu}_i)-\int_\Omega \psi_{\mu_{\tilde{\mathbb P}}}^i\,\d (\mu_{{\mathbb P}}-\mu_{\tilde{\mathbb P}}).
		\end{equation}

			By Kantorovich duality, it holds that $D_{{\mu}_i}(\mu_{\tilde{\mathbb P}},\mu_{{\mathbb P}})\geq0$ and $D_{\tilde{\mu}_i}(\mu_{{\mathbb P}},\mu_{\tilde{\mathbb P}})\geq0$. Moreover, since $\mu_1=\tilde{\mu}_1=\rho$
			and $\rho\in\mathcal P_{\mathrm{ac}}(S)\cap L^\infty(S)$, we have
			\begin{equation}
				\begin{aligned}
					D_\rho(\mu_{\mathbb P}, \mu_{\tilde{\mathbb P}})+D_\rho(\mu_{\tilde{\mathbb P}},\mu_{\mathbb P})=\int (\psi_{\mu_{\tilde{\mathbb P}}}^1-\psi_{\mu_{\mathbb P}}^1)\,\d (\mu_{\tilde{\mathbb P}}-\mu_{\mathbb P})
					&\geq C_2^{-1}\|\nabla\phi_{\mu_{\mathbb P}}^1-\nabla\phi_{\mu_{\tilde{\mathbb P}}}^1\|_{L^2(\rho)}^4 \\
					&\overset{*}{=}C_2^{-1}\|B-\tilde{B}\|_{L^2(\rho)}^4,
				\end{aligned}
			\end{equation}
			where $\phi_{\mu_{\mathbb P}}^1:=(\psi_{\mu_{{\mathbb P}}}^1)^c$ and $\phi_{\mu_{\tilde{\mathbb P}}}^1:=(\psi_{\mu_{\tilde{\mathbb P}}}^1)^c$, and $(*)$ follows from
			\begin{equation*}
				B(x_1)=x_1-\nabla\phi_{\mu_{\mathbb P}}^1(x_1), \quad \tilde{B}(x_1)=x_1-\nabla\phi_{\mu_{\tilde{\mathbb P}}}^1(x_1),\quad \rho\text{-a.e. } x_1\in S.
			\end{equation*}
		    Therefore, we have
			\begin{equation}
				\Var_{\mathbb P}(\mu_{\tilde{\mathbb P}})-\Var_{\mathbb P}(\mu_{\mathbb P})=\sum_{i=1}^{N}\lambda_iD_{{\mu}_i}(\mu_{\tilde{\mathbb P}},\mu_{{\mathbb P}})\geq \lambda_1D_\rho(\mu_{\tilde{\mathbb P}},\mu_{\mathbb P}),
			\end{equation}
			and
			\begin{equation}
			\Var_{\tilde{\mathbb P}}(\mu_{\mathbb P})-\Var_{\tilde{\mathbb P}}(\mu_{\tilde{\mathbb P}})=\sum_{i=1}^{N}\lambda_iD_{\tilde{\mu}_i}(\mu_{{\mathbb P}},\mu_{\tilde{\mathbb P}})\geq \lambda_1 D_\rho(\mu_{\mathbb P}, \mu_{\tilde{\mathbb P}}),
			\end{equation}
			which implies
			\begin{equation}\label{eq:barycenter-variance-lower}
				\Var_{\mathbb P}(\mu_{\tilde{\mathbb P}})-\Var_{\mathbb P}(\mu_{\mathbb P})+\Var_{\tilde{\mathbb P}}(\mu_{\mathbb P})-\Var_{\tilde{\mathbb P}}(\mu_{\tilde{\mathbb P}})\geq \lambda_1C_2^{-1}\|B-\tilde{B}\|_{L^2(\rho)}^4.
			\end{equation}

			On the other hand, for $i=2,\dots,N$, by the gluing lemma, there exists a coupling $\gamma_i\in\Pi(\mu_{\mathbb P}, \mu_i,\tilde{\mu}_i)$ with coordinates $(y, x_i,\tilde{x}_i)$, such that
			\begin{equation*}
				(x_i,\tilde{x}_i)_\#\gamma_i=\pi_i, \quad (y, x_i)_\#\gamma_i=\pi^b_i,
			\end{equation*}
			where $\pi_i\in\Pi(\mu_i,\tilde{\mu}_i)$ is a $W_1$-optimal transport plan between $\mu_i$ and $\tilde{\mu}_i$; $\pi^b_i\in\Pi(\mu_{\mathbb P}, \mu_i)$ is the $W_2$-optimal transport plan between $\mu_{\mathbb P}$ and $\mu_i$. Denote
			\[
			\tilde{\pi}^b_i:= (y,\tilde{x}_i)_\#\gamma_i\in\Pi(\mu_{\mathbb P},\tilde{\mu}_i).
			\]
			Then we have
			\begin{equation}
				\begin{aligned}
					\frac{1}{2}W_2^2(\mu_{\mathbb P}, \tilde{\mu}_i)-\frac{1}{2}W_2^2(\mu_{\mathbb P}, \mu_i)&\leq \frac{1}{2}\int |y-\tilde{x}_i|^2\,\d\tilde{\pi}^b_i-\frac{1}{2}\int |y-x_i|^2\,\d\pi^b_i\\
					&=\frac{1}{2}\int |y-\tilde{x}_i|^2-|y-x_i|^2\,\d\gamma_i\\
					&\leq D\int |x_i-\tilde{x}_i|\,\d\gamma_i=D W_1(\mu_i,\tilde{\mu}_i).
				\end{aligned}
			\end{equation}
			Hence,
			\begin{equation}\label{eq:variance-perturbation-1}
				\Var_{\tilde{\mathbb P}}(\mu_{\mathbb P})-\Var_{\mathbb P}(\mu_{\mathbb P})=\sum_{i=2}^{N}\frac{\lambda_i}{2}\left(W_2^2(\mu_{\mathbb P}, \tilde{\mu}_i)-W_2^2(\mu_{\mathbb P}, \mu_i)\right)\leq D\delta.
			\end{equation}
			Similarly, we have
			\begin{equation}\label{eq:variance-perturbation-2}
				\Var_{\mathbb P}(\mu_{\tilde{\mathbb P}})-\Var_{\tilde{\mathbb P}}(\mu_{\tilde{\mathbb P}})\leq D\delta.
			\end{equation}

			Thus, by \eqref{eq:barycenter-variance-lower}, \eqref{eq:variance-perturbation-1} and \eqref{eq:variance-perturbation-2}, we have
			\begin{equation}
				\|B-\tilde{B}\|_{L^2(\rho)}\leq C_3\lambda_1^{-1/4}\delta^{1/4},
			\end{equation}
			where $C_3$ depends on $\diam(S\cup Y)$, $n$ and $\|\rho\|_{L^\infty}$. Combining this with Theorem \ref{main} gives the estimate.

            \medskip

			  \textbf{Optimality of the exponent:} For $n\geq 2$, let $X,\, Y\subseteq \R^n$ and $\rho$ be the uniform source measure given by \cite[Theorem 1.2]{letrouit2026unstable} with $p=1$. Denote $S:=\overline{X}$. Then $\rho\in\mathcal P_{\mathrm{ac}}(S)\cap L^\infty(S)$ and for any $\alpha>\frac{1}{4}$, there exist $\mu^k, \tilde{\mu}^k\in\mathcal{P}(Y)$ such that, if $T^k$ and $\tilde T^k$ denote the unique optimal transport maps from $\rho$ to $\mu^k$ and from $\rho$ to $\tilde{\mu}^k$, respectively, then
			  \begin{equation}\label{eq:two-marginal-instability}
			  	\frac{\|T^k-\tilde T^k\|_{L^2(\rho)}}{W_1^\alpha(\mu^k,\tilde{\mu}^k)}\rightarrow+\infty.
			  \end{equation}

			   For each $k\in\N$, define $\mu_2^k=\dots=\mu_N^k=\mu^k$, $\tilde{\mu}_2^k=\dots=\tilde{\mu}_N^k=\tilde{\mu}^k$. Then for any $\pi\in\Pi(\rho,\mu_2^k,\dots,\mu_N^k)$, by \eqref{equal}, we have
			  \begin{equation}
			  	\int c(x_1,\dots,x_N)\,\d\pi\geq \frac{1}{2}\sum_{i=2}^{N}\lambda_1\lambda_i\int |x_1-x_i|^2\,\d\pi\geq \frac{1}{2}\lambda_1(1-\lambda_1)W_2^2(\rho,\mu^k).
			  \end{equation}

			  By \cite{zbMATH01060715} (see also \cite{zbMATH06484002}), equality holds if and only if $\pi=(\Id, T^k,\dots, T^k)_\#\rho$, which implies the multi-marginal optimal transport maps associated with $(\rho,\mu_2^k,\dots, \mu_N^k)$ and $(\rho,\tilde{\mu}_2^k,\dots, \tilde{\mu}_N^k)$ are $(T^k,\dots,T^k)$ and $(\tilde{T}^k,\dots,\tilde{T}^k)$. Therefore,
			  \begin{equation}
			  	\sum_{i=2}^N\lambda_i\|T_i^k-\tilde T_i^k\|_{L^2(\rho)}=(1-\lambda_1)\|T^k-\tilde{T}^k\|_{L^2(\rho)}, \quad \sum_{i=2}^N \lambda_iW_1(\mu_i^k,\tilde\mu_i^k)=(1-\lambda_1)W_1(\mu^k,\tilde{\mu}^k).
			  \end{equation}
		      Combining this with \eqref{eq:two-marginal-instability} proves the optimality.
		\end{proof}

\subsection{Proof of Theorem \ref{self-comparison}}\label{subsec:proof-self-comparison}
The optimality of the exponents is proved in \S\ref{appendix:optimality}.
\begin{proof}[Proof of Theorem \ref{self-comparison}]
Denote
\begin{equation*}
    \tilde{\mu}_i:=\mu_{\sigma(i)},\quad \tilde T_i:=T_{\sigma(i)},\quad i=2,\dots,N.
\end{equation*}
Since $\lambda_{\sigma(i)}=\lambda_i$, the barycentric quadratic cost $c(x_1,\dots,x_N)$ is invariant under the permutation $\sigma$. Hence $(\tilde T_2,\dots,\tilde T_N)$ are the multi-marginal optimal transport maps associated with $(\rho,\tilde\mu_2,\dots,\tilde\mu_N)$. Moreover,
\begin{equation}
    \tilde B=\lambda_1\Id+\sum_{i=2}^N\lambda_i\tilde T_i
    =\lambda_1\Id+\sum_{i=2}^N\lambda_iT_{\sigma(i)}
    =\lambda_1\Id+\sum_{i=2}^N\lambda_{\sigma(i)}T_{\sigma(i)}=B.
\end{equation}
Corollary \ref{cor:barycenter-preserving-stability} therefore gives
\begin{equation}
    \sum_{i=2}^N\lambda_i\|T_i-T_{\sigma(i)}\|_{L^2(\rho)}
    =\sum_{i=2}^N\lambda_i\|T_i-\tilde T_i\|_{L^2(\rho)}
    \le C\left(\sum_{i=2}^N\lambda_i W_1(\mu_i,\mu_{\sigma(i)})\right)^{1/2},
\end{equation}
where $C$ depends only on $N$ and $\diam(S\cup Y)$.

If $\sigma$ is the transposition exchanging $i$ and $j$, then
\begin{equation}
    \sum_{k=2}^N\lambda_k\|T_k-T_{\sigma(k)}\|_{L^2(\rho)}=(\lambda_i+\lambda_j)\|T_i-T_j\|_{L^2(\rho)},
\end{equation}
and
\begin{equation}
    \sum_{k=2}^N\lambda_k W_1(\mu_k,\mu_{\sigma(k)})=(\lambda_i+\lambda_j)W_1(\mu_i,\mu_j).
\end{equation}
This proves the pairwise estimate.
\end{proof}

\begin{proof}[Proof of Corollary \ref{cor:pairwise-two-scale}]
	 By Proposition \ref{prop:barycentermap} and \cite[Theorem 5.1]{zbMATH05956494} (see also \cite[Theorem 5.1]{zbMATH06670887}), the unique Wasserstein barycenter $\mu_{{\mathbb P}}\in\mathcal{P}(\Omega)$ satisfies the $L^\infty$ estimate
	 \begin{equation}
	 	\|\mu_{\mathbb P}\|_{L^\infty}\le \lambda_1^{-n}\|\rho\|_{L^\infty}.
	 \end{equation}
 Moreover, by \cite[Proposition 5.1]{zbMATH06484002}, we have
\begin{equation*}
    T_i=T_{\mu_{\mathbb P}}^i\circ B,\quad T_j=T_{\mu_{\mathbb P}}^j\circ B,
\end{equation*}
where $T_{\mu_{\mathbb P}}^i$ and $T_{\mu_{\mathbb P}}^j$ are the unique optimal transport maps from $\mu_{\mathbb P}$ to $\mu_i$ and from $\mu_{\mathbb P}$ to $\mu_j$, respectively. Then M\'erigot's two-marginal stability estimate yields
\begin{equation}
    \|T_i-T_j\|_{L^2(\rho)}
    =\|T_{\mu_{\mathbb P}}^i-T_{\mu_{\mathbb P}}^j\|_{L^2(\mu_{\mathbb P})}
    \le C W_1^{1/4}(\mu_i,\mu_j),
\end{equation}
where $C$ depends on $\lambda_1$, $\diam(S\cup Y)$, $n$ and $\|\rho\|_{L^\infty}$. Combining this with Theorem \ref{self-comparison} gives the stated minimum bounds.
\end{proof}

\section{Optimality and Barycenter-Preserving Examples}\label{sec:sharpness}
\subsection{Proof of Proposition \ref{exponent}}\label{appendix:optimality}
		We prove the optimality of the exponents in Theorem \ref{self-comparison} and Corollary \ref{cor:pairwise-two-scale} in this subsection.
		\begin{proof}[Proof of Proposition \ref{exponent}]
			 \textbf{Optimality in (a):} Fix $n\geq 1$ and $N\geq 3$. Choose $S=[0,1]^n\subseteq \R^n$ and $\lambda_1=\lambda_2=\dots=\lambda_N=\frac{1}{N}$. For any $0<\epsilon<1$, define
			 \begin{equation*}
			 	\rho=\mathcal{L}^n\restr{S}\ll \mathcal{L}^n, \quad \mu_2^\epsilon=(1-\epsilon)\delta_0+\epsilon\delta_{e_1}, \quad \mu_3=\dots=\mu_N=\delta_0,
			 \end{equation*}
			 where $e_1=(1,0,\dots,0)\in\R^n$. By \cite{zbMATH01060715} (see also \cite{zbMATH06484002}), there exists a unique tuple of multi-marginal optimal transport maps $(T_2^\epsilon, T_3^\epsilon, \dots, T_N^\epsilon)$ satisfying
			 \begin{equation*}
			 T_2^\epsilon(x_1)=e_1\chi_{A_\epsilon}(x_1), \quad 	T_3^\epsilon(x_1)=\dots=T_N^\epsilon(x_1)=0, \quad \rho\text{-a.e. }x_1\in [0,1]^n,
			 \end{equation*}
			 where $A_\epsilon:=\{x_1\in [0,1]^n: T_2^\epsilon(x_1)=e_1\}$ and $\rho(A_\epsilon)=\epsilon$. Note that by \eqref{equal},
			 \begin{equation}
			 \begin{aligned}
			 		&\int c(x_1, T_2^\epsilon(x_1),\dots, T_N^\epsilon(x_1))\,\d\rho(x_1)\\
			 		=&\frac{1}{2}\int \sum_{i=3}^{N}\lambda_1\lambda_i|x_1|^2+\lambda_1\lambda_2|x_1-T_2^\epsilon(x_1)|^2+\sum_{i=3}^{N}\lambda_2\lambda_i|T_2^\epsilon(x_1)|^2\,\d\rho(x_1)\\
			 		=&\frac{(N-1)n}{6N^2}+
			 	\frac{N-1}{2N^2}\epsilon-\frac{1}{N^2}\int_{A_\epsilon} \langle x_1, e_1\rangle \,\d\mathcal{L}^n(x_1).
			 \end{aligned}
			 \end{equation}
			 Hence minimizing the cost is equivalent to maximizing
			 $\int_{A_\epsilon}\langle x_1, e_1\rangle \,\d\mathcal{L}^n(x_1)$ over all measurable sets $A_\epsilon\subseteq [0,1]^n$ with $\mathcal{L}^n(A_\epsilon)=\epsilon$. This maximum is attained, up to null sets, by $A_\epsilon=[1-\epsilon,1]\times [0,1]^{n-1}$.

			Thus, we have
			\begin{equation*}
				\|T_2^\epsilon-T_3^\epsilon\|_{L^2(\rho)}=\epsilon^{1/2}, \quad W_1(\mu_2^\epsilon,\mu_3)=\epsilon,
			\end{equation*}
			which proves \textup{\textbf{(a)}}.

			\medskip

			\textbf{Optimality in (b), (c) and (d):} This construction is inspired by \cite[Theorem 1.2]{letrouit2026unstable}. We construct infinitely many well-separated cells accumulating at the origin. The argument is divided into six steps.

			\textbf{Step 1:}
			Fix $n\geq 2$ and $N\geq 3$. Denote $\lambda_k:=2^{-k^2}$. After discarding finitely many indices and relabeling, we may assume $0<\lambda_k\leq\frac{1}{2(N-1)}$ for $k\in\N$. Denote
			\begin{equation*}
				\lambda_{1}^k:=1-(N-1)\lambda_{k}\geq \frac{1}{2},\quad \lambda_{2}^k=\dots=\lambda_{N}^k:=\lambda_k.
			\end{equation*}
			Denote $a_k:=\frac{\lambda_k}{\lambda_{1}^k}\leq 1$ and $K_{n-2}:=[-\frac{1}{2}, \frac{1}{2}]^{n-2}$ with the convention that $K_0$ is a single point. Define two subsets in $\R^n$ by
			\begin{equation*}
				V_+^{a_k}:=[3,5]\times [-a_k,a_k]\times K_{n-2}, \quad V_-^{a_k}:= [-5,-3]\times [-a_k,a_k]\times K_{n-2}.
			\end{equation*}

			Denote $p_k:=2^{-k}e_1$ and $s_k:=\eta2^{-k}$, where $\eta\leq 1$ depends on $n$ and $N$ determined in \eqref{eta}
			below. Define the $k$-cell $Z_k$ by
			\begin{equation*}
				Z_k:=p_k+s_k(V_+^{a_k}\cup V_-^{a_k}), \quad k\in \N,
			\end{equation*}
			and define
			\begin{equation*}
			V_{k,+}:= p_k+s_kV_+^{a_k},\quad\text{and} \quad V_{k,-}:= p_k+s_kV_-^{a_k}.
			\end{equation*}
				Define the source set $S$ and the source measure $\rho$ by
			\begin{equation*}
				S:=\overline{\bigcup_{k=1}^\infty Z_k}=\bigcup_{k=1}^\infty Z_k\cup \{0\}\subseteq B(0,R), \quad \rho:=\frac{\mathcal{L}^n\restr{S}}{\mathcal{L}^n(S)}\in\mathcal P_{\mathrm{ac}}(S),
			\end{equation*}
			where $R$ depends only on $n$. By the estimate \eqref{lk} below, the sets $Z_k$ are pairwise disjoint, and hence
			\begin{equation}
				\mathcal{L}^n(S)=\sum_{k=1}^{\infty}\mathcal{L}^n(Z_k)=\sum_{k=1}^{\infty}8a_ks_k^n<+\infty.
			\end{equation}
			Thus $\rho\in\mathcal P_{\mathrm{ac}}(S)\cap L^\infty(S)$.

			For $k\in\N$, define five points in $\R^n$ by
			\begin{align*}
				A_k&:=p_k+s_k(a_k,1,0,\dots,0), &
				B_k&:=p_k+s_k(-a_k,-1,0,\dots,0),\\
				C_k&:=p_k+s_k(-a_k,1,0,\dots,0), &
				D_k&:=p_k+s_k(a_k,-1,0,\dots,0),
			\end{align*}
			and $E_k:=p_k+s_k(0,0,0,\dots,0)$. Define the target set by
			\begin{equation*}
				Y_k:={\{A_k,B_k,C_k,D_k,E_k\}}, \quad k\in\N, \quad Y:=\overline{\bigcup_{k=1}^\infty Y_k}=\bigcup_{k=1}^\infty Y_k\cup \{0\}\subseteq B(0, R).
			\end{equation*}
			For $k\in\N$, denote
			\begin{equation*}
				v_k:=\rho(V_{k,+})=\rho(V_{k,-})=\frac{4a_ks_k^n}{\mathcal{L}^n(S)}.
			\end{equation*}
		     Define the target measures $(\mu_2^k, \dots, \mu_N^k)$ by
		     \begin{equation*}
		     	\mu_{2}^k:=v_k\delta_{A_k}+v_k\delta_{B_k}+\sum_{l\ne k}\rho(Z_l)\delta_{E_l}, \quad  \mu_{3}^k:=v_k\delta_{D_k}+v_k\delta_{C_k}+\sum_{l\ne k}\rho(Z_l)\delta_{E_l},
		     \end{equation*}
			and
			\begin{equation*}
				\mu_{4}^k=\dots=\mu_{N}^k:=\sum_{l=1}^\infty\rho(Z_l)\delta_{E_l}.
			\end{equation*}

			By \cite{zbMATH01060715} (see also \cite{zbMATH06484002}), there exists a unique tuple of multi-marginal optimal transport maps $(T_2^k, T_3^k, \dots, T_N^k)$. We claim that they are given by
			\begin{equation*}
				T_{2}^k(x_1)=
				 \begin{cases}
				 	A_k, & x_1\in V_{k,+},\\ B_k, & x_1\in V_{k,-},\\ E_l, & x_1\in Z_l,\ l\ne k,
				 \end{cases}
				 T_{3}^k(x_1)=
				 \begin{cases}
				 	D_k, & x_1\in V_{k,+},\\ C_k, & x_1\in V_{k,-},\\ E_l, & x_1\in Z_l,\ l\ne k,
				 	\end{cases}
			\end{equation*}
			and
			\begin{equation*}
				T_{4}^k(x_1)=\dots=T_{N}^k(x_1)= E_l, \quad x_1\in Z_l, \quad \rho\text{-a.e. } x_1\in S.
				\end{equation*}

				It is enough to show that these maps are pointwise optimal among all admissible target tuples.

			\textbf{Step 2:} Let $k\in\N$ and $x_1\in Z_k$. We claim that any admissible target tuple containing a point from $Y_l$, $l\neq k$, has strictly larger cost than any admissible target tuple contained in $Y_k^{N-1}$. Indeed, by the construction in Step 1, we know $Z_k, Y_k\subseteq B(p_k, Rs_k)$. Hence, for any $x_2,\dots,x_N\in Y_k$, by \eqref{equal}, we have
			\begin{equation}\label{L1}
				c(x_1, x_2, \dots ,x_N)\leq 2R^2\sum_{1\leq i<j\leq N}\lambda_i^k\lambda_j^k s_k^2\leq C_4R^2\lambda_k s_k^2,
			\end{equation}
			where $C_4$ depends only on $N$. We choose
			\begin{equation}\label{eta}
				\eta=\min\left\{1, \frac{1}{8(1+\sqrt{2C_4})R}\right\}>0.
			\end{equation}

             On the other hand, let $(y_2,\dots,y_N)$ be any target tuple with $y_i\in Y_l$ for $l\neq k$. Since $Z_k, Y_k\subseteq B(p_k, Rs_k)$ and $Z_l, Y_l\subseteq B(p_l, Rs_l)$, we have
             \[
             \dist(Z_k\cup Y_k, Z_l\cup Y_l)
             \ge |p_k-p_l|-R(s_k+s_l).
             \]
             We now check the last term. If \(l>k\), then
             \[
             |p_k-p_l|\ge 2^{-k-1},
             \qquad
             s_k+s_l\le \frac32\eta 2^{-k}.
             \]
             Hence, by \eqref{eta},
             \[
             |p_k-p_l|-R(s_k+s_l)
             \ge
             \left(\frac12-\frac32R\eta\right)2^{-k}
             \ge 2\sqrt{2C_4}R\eta 2^{-k}
             =2\sqrt{2C_4}Rs_k.
             \]
             If \(l<k\), write \(m=k-l\ge1\). Then
             \[
             |p_k-p_l|=(2^m-1)2^{-k},
             \qquad
             s_k+s_l=(1+2^m)\eta2^{-k}.
             \]
             Again by \eqref{eta},
             \[
             |p_k-p_l|-R(s_k+s_l)
             \ge
             \left(1-3R\eta\right)2^{-k}
             \ge 2\sqrt{2C_4}R\eta2^{-k}
             =2\sqrt{2C_4}Rs_k.
             \]
             Therefore
             \begin{equation}\label{lk}
             	\dist(Z_k\cup Y_k, Z_l\cup Y_l)\geq |p_k-p_l|-R(s_k+s_l)\geq 2\sqrt{2C_4}Rs_k.
             \end{equation}
			Then by \eqref{equal}, we have
			\begin{equation}\label{L2}
				c(x_1, y_2,\dots,y_N)\geq \frac{\lambda_{1}^k\lambda_k}{2}|x_1-y_i|^2\geq \frac{\lambda_k}{4}\dist^2(Z_k,Y_l)\geq 2C_4R^2\lambda_k s_k^2.
			\end{equation}

		Combining \eqref{L1} and \eqref{L2} proves the claim. Hence, in the pointwise comparison on $Z_k$, it suffices
		to restrict to admissible target tuples contained in $Y_k^{N-1}$. We only need to consider tuples of the form:
		\begin{equation*}
				(x_2,x_3,E_k,\dots,E_k), \quad x_2\in\{A_k,B_k\},\quad x_3\in\{C_k,D_k\}.
		\end{equation*}

		\textbf{Step 3:} We now compare these finitely many target tuples. First take $x_1\in V_{k,+}$. We claim that the unique pointwise minimizer is
		\begin{equation*}
			(A_k,D_k,E_k,\dots,E_k).
		\end{equation*}

          Note that by \eqref{equal}, it holds that
        \begin{equation}
        \begin{aligned}
        		&c(x_1,x_2, x_3, E_k,\dots, E_k)\\
        		=&\frac{1}{2}\lambda_1^k\lambda_k|x_1-x_2|^2+\frac{1}{2}\lambda_1^k\lambda_k|x_1-x_3|^2+\frac{1}{2}(\lambda_k)^2|x_2-x_3|^2\\
        		&+\frac{1}{2}(1-\lambda_{1}^k-2\lambda_k)\left(\lambda_1^k|x_1-E_k|^2+\lambda_k|x_2-E_k|^2+\lambda_k|x_3-E_k|^2\right).
        \end{aligned}
        \end{equation}

		Denote $x_1:=p_k+s_k(u,v,z)\in V_{k,+}$ as the coordinates $(u,v,z)\in\R\times\R\times\R^{n-2}$. Since
			\begin{equation}
				|A_k-E_k|=|B_k-E_k|=|C_k-E_k|=|D_k-E_k|,
			\end{equation}
			we have
			\begin{equation}
			\begin{aligned}
			c(x_1, A_k, C_k,\dots,E_k)-c(x_1, A_k, D_k,\dots,E_k)=2\lambda_ks_k^2\left(\lambda_1^k(ua_k-v)-\lambda_k(1-a_k^2)\right)>0,
			\end{aligned}
			\end{equation}
			where the inequality follows from
			\begin{equation*}
				\lambda_1^k(ua_k-v)\geq \lambda_1^k (3a_k-a_k)=2\lambda_k.
			\end{equation*}
			Similarly, it holds that
			\begin{equation}
				c(x_1, B_k, D_k,\dots,E_k)-c(x_1, A_k, D_k,\dots,E_k)=2\lambda_ks_k^2\left(\lambda_1^k(ua_k+v)-\lambda_k(1-a_k^2)\right)>0,
			\end{equation}
			and
			\begin{equation}
					c(x_1, B_k, C_k,\dots,E_k)-c(x_1, A_k, D_k,\dots,E_k)=4\lambda_1^k\lambda_kua_ks_k^2>0.
			\end{equation}
		Thus, $(A_k,D_k,E_k,\dots,E_k)$ is the unique pointwise minimizer on $V_{k,+}$. By symmetry, $(B_k,C_k,E_k,\dots,E_k)$ is the unique pointwise minimizer on $V_{k,-}$.

		\textbf{Step 4:} Let $l\neq k$ and $x_1\in Z_l$. By the same cell-localization argument as in Step 2,
		in the pointwise comparison on $Z_l$, it suffices to restrict to admissible tuples contained in $Y_l^{N-1}$. For this cell, the only atoms of the target marginals \(\mu_2^k,\dots,\mu_N^k\) in \(Y_l\) are all equal to \(E_l\). Thus the only localized admissible tuple is \((E_l,E_l,\dots,E_l)\), and it is the unique pointwise minimizer on \(Z_l\).

		Combining this with the preceding results, the maps given in Step 1 are pointwise optimal among all admissible target tuples for $\rho$-a.e. $x_1\in S$. Therefore, these maps are the multi-marginal optimal transport maps associated with $(\rho, \mu_2^k,\dots, \mu_N^k)$.

		 \textbf{Step 5:} We show that
		 \begin{equation*}
		 	W_1(\mu_2^k, \mu_3^k)=\frac{16a_k^2s_k^{n+1}}{\mathcal{L}^n(S)}.
		 \end{equation*}

		 On the one hand, by matching $A_k$ to $C_k$ and $B_k$ to $D_k$, we have
		 \begin{equation}
		 	W_1(\mu_2^k, \mu_3^k)\leq v_k|A_k-C_k|+v_k|B_k-D_k|=\frac{16a_k^2s_k^{n+1}}{\mathcal{L}^n(S)}.
		 \end{equation}

		 On the other hand, define a function $f$ by
		 \begin{equation*}
		 	f(A_k)=f(B_k):=a_ks_k,\quad f(C_k)=f(D_k):=-a_ks_k,\quad f(E_l):=0,\quad l\in\N.
		 \end{equation*}
		 Combining with \eqref{lk}, $f$ is $1$-Lipschitz on $\{A_k,B_k,C_k,D_k\}\cup \{E_l: l\in\N\}$. By McShane's lemma, $f$ can be extended to $\R^n$ with $\Lip(f)\leq 1$. Thus, by Kantorovich--Rubinstein duality, we have
		 \begin{equation}
		 	W_1(\mu_2^k, \mu_3^k)\geq \int f\,\d (\mu_2^k-\mu_3^k)=\frac{16a_k^2s_k^{n+1}}{\mathcal{L}^n(S)}.
		 \end{equation}

		 Therefore,
		 \begin{equation}
		 	W_1(\mu_2^k, \mu_3^k)=\frac{16a_k^2s_k^{n+1}}{\mathcal{L}^n(S)}, \quad \|T_2^k-T_3^k\|_{L^2(\rho)}=
		 	\frac{4\sqrt{2}a_k^{1/2}s_k^{(n+2)/2}}{(\mathcal{L}^n(S))^{1/2}}.
		 \end{equation}

		 \textbf{Step 6:} Recall that $a_k=\frac{\lambda_k}{\lambda_{1}^k}\asymp \lambda_k=2^{-k^2}$ and $s_k=\eta 2^{-k}\asymp 2^{-k}$ as $k\rightarrow \infty$. Moreover, since $\lambda_1^k=1-(N-1)\lambda_k
		 \in[\frac{1}{2},1)$, the $\lambda_1$-dependent constant appearing in the proof of Corollary
		 \ref{cor:pairwise-two-scale} remains uniformly bounded along this sequence.

		 If the exponent $\alpha>\frac{1}{4}$, then
		 \begin{equation}
		 	\frac{\|T_2^k-T_3^k\|_{L^2(\rho)}}{W_1^\alpha(\mu_2^k, \mu_3^k)}\asymp \lambda_k^{1/2-2\alpha}
		 	s_k^{(n+2)/2-(n+1)\alpha}\rightarrow +\infty,
		 \end{equation}
			which proves \textup{\textbf{(b)}}.

		 Recall that $\lambda_2^k+\lambda_3^k=2\lambda_k\asymp 2^{-k^2}$ as $k\rightarrow\infty$. If the exponent in $W_1(\mu_2^k, \mu_3^k)$ is $\frac{1}{2}$, then for the exponent $\beta >-\frac{1}{2}$,
		 \begin{equation}
		 	\frac{\|T_2^k-T_3^k\|_{L^2(\rho)}}{(\lambda_2^k+\lambda_3^k)^\beta W_1^{1/2}(\mu_2^k, \mu_3^k)}\asymp
		 	\lambda_k^{-1/2-\beta}s_k^{1/2}\rightarrow +\infty,
		 \end{equation}
			which proves \textup{\textbf{(c)}}.

		 If the exponent in $W_1(\mu_2^k, \mu_3^k)$ is $\theta\in[\frac{1}{4}, \frac{1}{2}]$, then for the exponent $\gamma>\frac{1}{2}-2\theta$,
		 \begin{equation}
		 	\frac{\|T_2^k-T_3^k\|_{L^2(\rho)}}{(\lambda_2^k+\lambda_3^k)^\gamma W_1^\theta(\mu_2^k, \mu_3^k)}\asymp
		 \lambda_k^{1/2-2\theta-\gamma}s_k^{(n+2)/2-(n+1)\theta}\rightarrow +\infty,
		 \end{equation}
			which proves \textup{\textbf{(d)}}.
		\end{proof}

\subsection{Barycenter-preserving perturbations}\label{sec:barycenter-preserving}
			We present two non-permutation examples that preserve the barycenter map. The construction relies
			on the following standard dual characterization of Wasserstein barycenters (cf. \cite{zbMATH05956494}); see also \cite{zbMATH07007327} for a related averaging-map characterization.
			\begin{proposition}\label{prop:barycenter-dual-characterization}
				Let $\mathbb P=\lambda_1\delta_\rho+\sum_{i=2}^N\lambda_i\delta_{\mu_i}\in\mathcal{P}(\mathcal{P}(\Omega))$. Then the following statements are equivalent:
				\begin{itemize}
					\item[\textup{\textbf{(a)}}] $\nu\in\mathcal{P}(\Omega)$ is the Wasserstein barycenter of $\mathbb P$;
					\item[\textup{\textbf{(b)}}] There exist Kantorovich potentials $\Psi_i$ from $\nu$ to $\mu_i$, for $i=1,\dots,N$ with $\mu_1=\rho$, after choosing suitable additive constants, such that
					\begin{equation*}
						\sum_{i=1}^{N}\lambda_i \Psi_i(x)\geq 0, \quad \forall \, x\in\Omega,\quad\sum_{i=1}^{N}\lambda_i \Psi_i(x)=0,\quad \nu \text{-a.e.} \, x\in\Omega.
					\end{equation*}
				\end{itemize}
			\end{proposition}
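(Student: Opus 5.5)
The plan is to prove the two implications separately. The direction (b)$\Rightarrow$(a) follows from weak duality alone. The direction (a)$\Rightarrow$(b) comes from building the potentials $\Psi_i$ out of a $c$-conjugate tuple of multi-marginal Kantorovich potentials $(\varphi_1,\dots,\varphi_N)$ for the cost \eqref{cost}. Throughout, $\mu_1=\rho$ and $\Psi_i^c$ denotes the quadratic $c$-transform.

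\textbf{(b)$\Rightarrow$(a).} Let $\nu'\in\mathcal P(\Omega)$. By Kantorovich duality, $\frac12 W_2^2(\nu',\mu_i)\ge \int\Psi_i\,\d\nu'+\int\Psi_i^c\,\d\mu_i$ for every $i$. Equality holds at $\nu'=\nu$, because $\Psi_i$ is a Kantorovich potential from $\nu$ to $\mu_i$. Multiplying by $\lambda_i$ and summing gives
\begin{equation*}
\Var_{\mathbb P}(\nu')\ge \int\sum_{i=1}^N\lambda_i\Psi_i\,\d\nu'+\sum_{i=1}^N\lambda_i\int\Psi_i^c\,\d\mu_i \ge \sum_{i=1}^N\lambda_i\int\Psi_i^c\,\d\mu_i .
\end{equation*}
The last expression equals $\Var_{\mathbb P}(\nu)$, since $\sum_i\lambda_i\Psi_i=0$ holds $\nu$-a.e. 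Hence $\nu$ is a minimizer, that is, a barycenter.

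\textbf{(a)$\Rightarrow$(b).} By Proposition \ref{prop:barycentermap} the barycenter is unique, so it suffices to produce the potentials for $\nu=\mu_{\mathbb P}=B_\#\rho=(B_x)_\#\gamma$, where $\gamma$ is the optimal multi-marginal plan. For $y\in\Omega$ we define
\begin{equation*}
\Psi_i(y):=\inf_{x_i}\Bigl(\tfrac12|x_i-y|^2-\tfrac{1}{\lambda_i}\varphi_i(x_i)\Bigr).
\end{equation*}
Since the infima are over separate variables, the weighted sum of infima equals one joint infimum:
\begin{equation*}
\sum_{i=1}^N\lambda_i\Psi_i(y)=\inf_{x_1,\dots,x_N}\Bigl(\sum_{i=1}^N\tfrac{\lambda_i}{2}|x_i-y|^2-\sum_{i=1}^N\varphi_i(x_i)\Bigr)\ge\inf_x\bigl(c(x)-\textstyle\sum_i\varphi_i(x_i)\bigr)\ge 0,
\end{equation*}
using \eqref{cost} and $R\ge0$. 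For $y=B_x$ with $x$ in the support of $\gamma$, the tuple $x$ itself is admissible in this joint infimum. Its value is $c(x)-\sum_i\varphi_i(x_i)=0$ by \eqref{eq:defect-zero}, so the sum vanishes $\nu$-a.e. This gives both the inequality and the $\nu$-a.e.\ equality required in (b).

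It remains to check that each $\Psi_i$ is a Kantorovich potential from $\nu$ to $\mu_i$; I expect this to be the only delicate point. By construction $\Psi_i(y)+\varphi_i(x)/\lambda_i\le\frac12|x-y|^2$, so weak duality gives
\begin{equation*}
\int\Psi_i\,\d\nu+\int\frac{\varphi_i}{\lambda_i}\,\d\mu_i\le\frac12W_2^2(\nu,\mu_i)\quad\text{for every } i.
\end{equation*}
Multiply by $\lambda_i$ and sum. The left side becomes $\int\sum_i\lambda_i\Psi_i\,\d\nu+\sum_i\int\varphi_i\,\d\mu_i$. The first term is $0$ by the previous step, and the second equals the optimal multi-marginal cost $\int c\,\d\gamma$. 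The right side is $\Var_{\mathbb P}(\nu)$. I will use the standard identity that the optimal multi-marginal cost equals $\min\Var_{\mathbb P}$. This identity is implicit in Proposition \ref{prop:barycentermap}: $\int c\,\d\gamma=\sum_i\frac{\lambda_i}{2}\int|x_i-B_x|^2\,\d\gamma$, and each $(x_i,B_x)_\#\gamma$ is a coupling of $\mu_i$ and $\nu$, which gives $\le$; the reverse inequality follows by gluing optimal plans from $\nu$ to each $\mu_i$. Hence the weighted sum of the $N$ weak-duality inequalities is an equality. Since every $\lambda_i>0$, each individual inequality must then be an equality, so $(\Psi_i,\varphi_i/\lambda_i)$ is an optimal dual pair. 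Therefore $\Psi_i$ is a Kantorovich potential from $\nu$ to $\mu_i$ (replacing $\varphi_i/\lambda_i$ by $\Psi_i^c$ only improves the pair). Additive constants can be redistributed so that their weighted sum is $0$, which completes (b).
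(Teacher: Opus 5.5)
Your proof is correct in both directions. The paper gives no proof of this proposition and simply cites it as the standard dual characterization of Agueh--Carlier \cite{zbMATH05956494}. There it comes out of the convex dual of the barycenter problem itself, $\inf_\nu\Var_{\mathbb P}(\nu)=\sup\{\sum_i\int S_{\lambda_i}f_i\,\d\mu_i:\ \sum_i f_i=0\}$ with $S_\lambda f(x)=\inf_y\{\frac{\lambda}{2}|x-y|^2-f(y)\}$, together with complementary slackness.

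Your route is genuinely different. You never dualize the barycenter problem. Instead you build the barycentric potentials from the multi-marginal $c$-conjugate tuple by partial $c$-transforms, $\Psi_i=(\varphi_i/\lambda_i)^c$. You then certify that each pair is optimal by summing the $N$ weak-duality inequalities against the identity $\min\Var_{\mathbb P}=\int c\,\d\gamma$. This uses only the multi-marginal duality already invoked in \S\ref{subsec:separation}, plus elementary gluing. It also makes the balance condition transparent: by the parallel-axis identity $\sum_i\frac{\lambda_i}{2}|x_i-y|^2=c(x)+\frac12|y-B_x|^2$,
\begin{equation*}
\sum_{i=1}^N\lambda_i\Psi_i(y)=\inf_{x}\Bigl\{R(x)+\tfrac12|y-B_x|^2\Bigr\}.
\end{equation*}
So both conditions in (b) are direct consequences of the Kantorovich defect $R\ge0$ and of its vanishing on the optimal plan. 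The Agueh--Carlier route, by contrast, needs a duality-with-attainment theorem for the barycenter functional but no multi-marginal input. Yours relies on attainment in the multi-marginal dual instead.

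Two minor scope remarks.

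First, your reduction of (a)$\Rightarrow$(b) to $\nu=B_\#\rho$ uses uniqueness from Proposition \ref{prop:barycentermap}, hence $\rho\ll\mathcal L^n$, which the statement does not assume. This is easily removed. For any barycenter $\nu$, the gluing step you already sketch, combined with the parallel-axis identity, forces $y=B_x$ almost everywhere. It therefore yields an optimal plan $\gamma'$ with $(B_x)_\#\gamma'=\nu$ and $R=0$ $\gamma'$-a.e., after which your construction applies verbatim.

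Second, in (b)$\Rightarrow$(a) you compare only with $\nu'\in\mathcal P(\Omega)$. If competitors range over $\mathcal P_2(\R^n)$ as in the introduction, first push $\nu'$ forward by the nearest-point projection onto the closed convex set $\Omega$. This does not increase any $W_2(\cdot,\mu_i)$, so the comparison reduces to your case.
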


			In the following two examples, fix $\mathbb P=\lambda_1\delta_\rho+\sum_{i=2}^N\lambda_i\delta_{\mu_i}\in\mathcal{P}(\mathcal{P}(\Omega))$ and assume $\rho\in\mathcal P_{\mathrm{ac}}(\Omega)$. By Proposition \ref{prop:barycentermap}, there exists a unique Wasserstein barycenter $\mu_{\mathbb P}$ of $\mathbb P$ with $\mu_{\mathbb P}\ll \mathcal{L}^n$. For $i=1,\dots,N$, let $T_{\mu_{\mathbb P}}^i=\Id-\nabla \Psi_{\mu_{\mathbb P}}^i$ be the unique optimal transport map from $\mu_{\mathbb P}$ to $\mu_i$ with $\mu_1=\rho$, where $\Psi_{\mu_{\mathbb P}}^i$ is a Kantorovich potential from $\mu_{\mathbb P}$ to $\mu_i$. We choose the additive constants of $\Psi_{\mu_{\mathbb P}}^i$ such that Proposition \ref{prop:barycenter-dual-characterization} \textup{\textbf{(b)}} holds.

		   \begin{example}[Balanced translations]
		   	Let $a_1=0\in\R^n$, and let $a_2,\dots,a_N\in\R^n$ be such that $\sum_{i=2}^N\lambda_i a_i=0$.
		   	Define
		   	\begin{equation*}
		   		\tilde{\Psi}_{\mu_{\mathbb P}}^i(x):=\Psi_{\mu_{\mathbb P}}^i(x)-\langle a_i, x\rangle, \quad i=1,\dots,N,
		   	\end{equation*}
		   	and
		   	\begin{equation*}
		   		\tilde{T}_{\mu_{\mathbb P}}^i:=\Id-\nabla \tilde{\Psi}_{\mu_{\mathbb P}}^i=T_{\mu_{\mathbb P}}^i+a_i, \quad \tilde{\mu}_i:=(\tilde{T}_{\mu_{\mathbb P}}^i)_{\#}\mu_{\mathbb P}.
		   	\end{equation*}

		   Assume the translations are chosen such that $\spt \tilde{\mu}_i\subseteq \Omega$ for $i=1,\dots,N$. Consider the perturbed tuple $(\tilde{\mu}_1,\tilde{\mu}_2,\dots,\tilde{\mu}_N)$. Since
		   	\begin{equation}
		   			\frac{1}{2}|x|^2-\tilde{\Psi}_{\mu_{\mathbb P}}^i(x)=	\left(\frac{1}{2}|x|^2-\Psi_{\mu_{\mathbb P}}^i(x)\right)+\langle a_i,x\rangle, \quad i=1,\dots, N
		   	\end{equation}
		   	are convex, the maps $\tilde{T}_{\mu_{\mathbb P}}^i$ are the unique optimal transports from $\mu_{\mathbb P}$ to $\tilde{\mu}_i$ with $\tilde{\mu}_1=\rho$. Moreover,
		   	\begin{equation}
		   		\sum_{i=1}^{N}\lambda_i \tilde{\Psi}_{\mu_{\mathbb P}}^i(x)=\sum_{i=1}^{N}\lambda_i \Psi_{\mu_{\mathbb P}}^i(x)-	\left\langle \sum_{i=1}^N\lambda_i a_i,x\right\rangle= \sum_{i=1}^{N}\lambda_i \Psi_{\mu_{\mathbb P}}^i(x),
		   	\end{equation}
		   	by Proposition \ref{prop:barycenter-dual-characterization}, $\tilde{\mathbb P}=\lambda_1\delta_\rho+\sum_{i=2}^N\lambda_i\delta_{\tilde{\mu}_i}\in\mathcal{P}(\mathcal{P}(\Omega))$ has the unique Wasserstein barycenter $\mu_{\mathbb P}$. Therefore, the perturbed tuple $(\rho,\tilde{\mu}_2,\dots,\tilde{\mu}_N)$ preserves the barycenter map.
		   \end{example}

		   \begin{example}[Balanced nonlinear perturbations]
		   	Let $k,l\in \{2,\dots,N\}$ with $k\neq l$. Let $h\in C^2_c(\mathbb R^n)$, and let $\epsilon>0$ be such that
		   	\begin{equation}\label{eq:nonlinear-admissibility}
		   			\frac12|x|^2-\Psi_{\mu_{\mathbb P}}^k(x)+\epsilon h(x), \quad
		   		\frac12|x|^2-\Psi_{\mu_{\mathbb P}}^l(x)-\epsilon\frac{\lambda_k}{\lambda_l}h(x)
		   	\end{equation}
		   	remain convex and the perturbed measures $\tilde{\mu}_i$ defined below satisfy $\spt \tilde{\mu}_i\subseteq \Omega$ for $i=1,\dots,N$. This is an admissibility assumption on the pair $(h,\epsilon)$. Define
		   	\begin{align*}
\tilde \Psi_{\mu_{\mathbb P}}^k(x)
    :=\Psi_{\mu_{\mathbb P}}^k(x)-\epsilon h(x), \quad
\tilde \Psi_{\mu_{\mathbb P}}^l(x)
    :=\Psi_{\mu_{\mathbb P}}^l(x)+\epsilon\frac{\lambda_k}{\lambda_l}h(x),
\end{align*}
\[\tilde \Psi_{\mu_{\mathbb P}}^i(x)
:=\Psi_{\mu_{\mathbb P}}^i(x), \quad i\neq k,l,\]
		   	and
		   	\begin{align*}
\tilde{T}_{\mu_{\mathbb P}}^k
    :=\Id-\nabla \tilde{\Psi}_{\mu_{\mathbb P}}^k
      =T_{\mu_{\mathbb P}}^k+\epsilon\nabla h, \quad
\tilde{T}_{\mu_{\mathbb P}}^l
    :=\Id-\nabla \tilde{\Psi}_{\mu_{\mathbb P}}^l
      =T_{\mu_{\mathbb P}}^l-\epsilon\frac{\lambda_k}{\lambda_l}\nabla h,
\end{align*}
\[\tilde{T}_{\mu_{\mathbb P}}^i
:={T}_{\mu_{\mathbb P}}^i, \quad i\neq k,l.\]
		   	Define
		   	\begin{equation*}
		   		\tilde{\mu}_i:= (\tilde{T}_{\mu_{\mathbb P}}^i)_{\#}\mu_{\mathbb P}, \quad i=1,\dots,N.
		   	\end{equation*}

           Consider the perturbed tuple $(\tilde{\mu}_1, \tilde{\mu}_2,\dots,\tilde{\mu}_N)$. By the admissibility assumption,
           \begin{equation*}
           	\frac{1}{2}|x|^2-\tilde\Psi_{\mu_{\mathbb P}}^i(x), \quad i=1,\dots,N
           \end{equation*}
           are convex, and the maps $\tilde{T}_{\mu_{\mathbb P}}^i$ are the unique optimal transport maps from $\mu_{\mathbb P}$ to $\tilde{\mu}_i$ with $\tilde{\mu}_1=\rho$. Moreover,
		   	\begin{equation}
		   			\sum_{i=1}^{N}\lambda_i \tilde{\Psi}_{\mu_{\mathbb P}}^i(x)=\sum_{i=1}^{N}\lambda_i \Psi_{\mu_{\mathbb P}}^i(x)-\lambda_k\epsilon h(x)+\lambda_l \epsilon\frac{\lambda_k}{\lambda_l}h(x)= \sum_{i=1}^{N}\lambda_i \Psi_{\mu_{\mathbb P}}^i(x).
		   	\end{equation}
		   	By Proposition \ref{prop:barycenter-dual-characterization}, the perturbed tuple $(\rho,\tilde{\mu}_2,\dots,\tilde{\mu}_N)$ preserves the barycenter map.
		   \end{example}

		   \begin{remark}
		   	The nonlinear example relies on the admissibility assumption. This condition is needed since the perturbation must preserve the convexity of the functions defined in \eqref{eq:nonlinear-admissibility}. For the diagonal tuple $\mu_2=\dots=\mu_N=\rho$, this condition holds for any sufficiently small balanced perturbation. Therefore the two examples above use the same idea: the linear balance condition on the Kantorovich potentials preserves the barycenter, while the convexity condition determines the admissible directions.
		   \end{remark}

\section{The Scope of the Two-Mode Mechanism}\label{sec:beyond}
The proofs in Sections~\ref{sec:mode} and~\ref{sec:sharpness} use two logically distinct inputs. The first is \emph{relative coercivity}: after a collective coordinate has been fixed, the Kantorovich defect controls the remaining relative variables quadratically. The second is \emph{external-mode stability}: the collective map must admit a variational representation for which a quantitative stability estimate is available. The square-root term in Theorem~\ref{main} comes from the first input, while the loss to the exponent $\frac14$ in Theorem~\ref{multi-marginal} comes from the second.

This section tests these two inputs beyond the basic barycentric quadratic cost. Subsection~\ref{subsec:collective-coordinate} proves that perturbations depending only on the collective coordinate preserve the internal coercivity estimate and reduce the external mode to a penalized Wasserstein barycenter. Subsection~\ref{subsec:sum-costs} applies this result to uniformly concave costs of the sum. The remaining subsections describe the structural boundary of the argument: cyclic symmetrization explains the complete-graph model, pairwise quadratic interactions lead to a normal-mode decomposition, and hedonic, Coulomb, and determinant costs exhibit distinct failures of the two required inputs.

The first two subsections contain rigorous stability results. The remaining subsections are structural analyses and do not assert a general stability theorem.

\medskip

\subsection{Collective-coordinate perturbations and persistence of the two-mode estimate}\label{subsec:collective-coordinate}
Let \(\Omega=\overline{\operatorname{conv}(S\cup Y)}\). For
\((x_1,\dots,x_N)\in S\times Y^{N-1}\), define the
linear collective coordinate
\[
    Z_x:=\sum_{i=1}^N\lambda_i x_i.
\]
For the quadratic cost \eqref{cost}, this coordinate is exactly the pointwise
weighted barycenter. We then define
\[
    M_x:=\frac{Z_x-\lambda_1x_1}{1-\lambda_1},
    \quad
    V_{i,x}:=x_i-M_x,
    \quad i=2,\dots,N.
\]

Let \(c_0\) denote the weighted quadratic cost in \eqref{cost}. Consider the cost
\begin{equation}\label{eq:collective-cost}
    c_F(x_1,\dots,x_N):=c_0(x_1,\dots,x_N)+F(Z_x),
\end{equation}
where \(F\in C^2(U)\) for some open neighbourhood \(U\supseteq\Omega\). Then
\begin{equation}\label{eq:collective-splitting}
    c_F(x_1,\dots,x_N)
    =\frac{\lambda_1}{2(1-\lambda_1)}|Z_x-x_1|^2
      +\frac12\sum_{i=2}^N\lambda_i|V_{i,x}|^2
      +F(Z_x).
\end{equation}

Assume that there exist constants
\(\kappa_F\in(0,1]\) and \(\tau_F\ge0\) such that, as quadratic forms on $U$,
\begin{equation}\label{eq:F-curvature}
    -\tau_F I \leq D^2F\leq (1-\kappa_F)I.
\end{equation}
The Kim--Pass twist condition follows from the monotonicity of the transformed collective coordinate. Indeed, the map
\[
    H(z):=z-\nabla F(z)
\]
satisfies
\[
    \langle H(z)-H(z'),z-z'\rangle
    \ge \kappa_F |z-z'|^2,
    \qquad z,z'\in U.
\]
Thus \(H\) is injective. Moreover,
\[
    D_{x_1}c_F(x_1,\dots,x_N)
    =
    \lambda_1(x_1-H(Z_x)).
\]
Hence, for fixed \(x_1\), the first-order condition determines \(H(Z_x)\), and therefore determines the collective coordinate \(Z_x\). This verifies the twist on splitting sets for \(c_F\) (cf. \cite{kim2014general}). By Kim and Pass \cite[Theorem 3.1]{kim2014general}, for $\rho\in\mathcal P_{\mathrm{ac}}(S)$, there exists a unique tuple of multi-marginal optimal transport maps \((T^F_2,\dots,T^F_N)\) associated with \((\rho,\mu_2,\dots,\mu_N)\).

Define the collective map, the associated target mean and the internal directions by
\[Z^F(x_1):=\lambda_1x_1+\sum_{i=2}^N\lambda_iT_i^F(x_1),\]
and
\begin{equation*}
	M^F(x_1):=\frac{Z^F(x_1)-\lambda_1x_1}{1-\lambda_1},\quad
	V_i^F(x_1):=T_i^F(x_1)-M^F(x_1), \quad i=2,\dots,N.
\end{equation*}
Let \((\varphi^F_1,\dots,\varphi^F_N)\) be a \(c_F\)-conjugate tuple of Kantorovich potentials for the multi-marginal optimal transport problem \((\rho,\mu_2,\dots,\mu_N)\). For $x_1\in S$, $x_2,\dots, x_N\in Y$, define
\[
R_F(x_1,\dots,x_N)
:=
c_F(x_1,\dots,x_N)-\sum_{i=1}^N\varphi^F_i(x_i)\geq 0,
\]
and
\[
    R_F(x_1,T_2^F(x_1), \dots, T_N^F(x_1))=0, \quad \rho\text{-a.e. }x_1\in S.
\]
\begin{proposition}\label{prop:collective-coercivity} Let $\rho\in\mathcal P_{\mathrm{ac}}(S)$ and let \(c_F\) be
	defined by \eqref{eq:collective-cost}.
For \(\rho\)-a.e. \(x_1\in S\) and every \(x_2,\dots,x_N\in Y\), we have
\begin{equation}\label{eq:collective-residual-bound}
    R_F(x_1,\dots,x_N)
    \ge
    \frac{\kappa_F}{4}\sum_{i=2}^N \lambda_i^2
    |V_{i,x}-V^F_i(x_1)|^2
    -
    \frac{1+\tau_F+2\kappa_F}{2}|Z_x-Z^F(x_1)|^2.
\end{equation}

Moreover, let \((\tilde\mu_2,\ldots,\tilde\mu_N)\subseteq\mathcal P(Y)\) be another tuple of target marginals, and let
\((\tilde T_2^F,\ldots,\tilde T_N^F)\) be the corresponding multi-marginal optimal transport maps. Define
\[
    \tilde Z^F(x_1):=\lambda_1x_1+\sum_{i=2}^N\lambda_i\tilde T_i^F(x_1).
\]
Then
\begin{equation}\label{eq:collective-stability}
    \sum_{i=2}^N \lambda_i
    \|T^F_i-\tilde T^F_i\|_{L^2(\rho)}
    \le
    C
    \left[
    \left(
    \sum_{i=2}^N\lambda_iW_1(\mu_i,\tilde\mu_i)
    \right)^{1/2}
    +
    \|Z^F-\tilde Z^F\|_{L^2(\rho)}
    \right],
\end{equation}
where \(C\) depends on \(N\), \(\diam(S\cup Y)\),
\(\kappa_F\) and \(\tau_F\).
\end{proposition}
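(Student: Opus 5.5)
We mirror the proof of Proposition \ref{prop:kantorovich-defect-lower}. Fix $x_1$ with $R_F(x_1,T^F_2(x_1),\dots,T^F_N(x_1))=0$, and write $\Delta x_i:=x_i-T^F_i(x_1)$, $\Delta Z:=Z_x-Z^F(x_1)=\sum_{i\ge2}\lambda_i\Delta x_i$ and $\Delta V_i:=V_{i,x}-V^F_i(x_1)$. As in \eqref{eq:delta-v-decomposition}, $\Delta x_i=\Delta V_i+\Delta Z/(1-\lambda_1)$ and $\sum_{i\ge2}\lambda_i\Delta V_i=0$. Set $p_i:=\nabla_{x_i}c_F(x_1,T^F_2(x_1),\dots,T^F_N(x_1))$, which now contains the extra term $\lambda_i\nabla F(Z^F(x_1))$.

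\textbf{Lower expansion.} By \eqref{eq:collective-splitting} and the lower bound $D^2F\ge-\tau_F I$, Taylor's formula gives
\[
c_F(x)\ge c_F(x_1,T^F(x_1))+\sum_{i\ge2}\langle p_i,\Delta x_i\rangle+\frac{\lambda_1}{2(1-\lambda_1)}|\Delta Z|^2+\frac12\sum_{i\ge2}\lambda_i|\Delta V_i|^2-\frac{\tau_F}{2}|\Delta Z|^2 .
\]

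\textbf{One-coordinate upper bound.} As in \eqref{eq:one-coordinate-potential-bound}, vary only the $i$-th coordinate and use the upper bound $D^2F\le(1-\kappa_F)I$, under which $Z$ moves by $\lambda_i\Delta x_i$. This gives
\[
\varphi^F_i(x_i)\le\varphi^F_i(T^F_i(x_1))+\langle p_i,\Delta x_i\rangle+\tfrac12\bigl(\lambda_i(1-\lambda_i)+(1-\kappa_F)\lambda_i^2\bigr)|\Delta x_i|^2 .
\]
The coefficient is $\tfrac12(\lambda_i-\kappa_F\lambda_i^2)$.

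\textbf{Combining.} Subtracting and using $\sum\lambda_i|\Delta x_i|^2=\sum\lambda_i|\Delta V_i|^2+|\Delta Z|^2/(1-\lambda_1)$, the non-$\kappa_F$ terms collapse exactly to $-\tfrac12|\Delta Z|^2$, as in the barycentric case. What remains is
\[
R_F\ge\frac{\kappa_F}{2}\sum_{i\ge2}\lambda_i^2|\Delta x_i|^2-\frac{1+\tau_F}{2}|\Delta Z|^2 .
\]
Expand $\sum\lambda_i^2|\Delta x_i|^2$ in $\Delta V_i$ and $\Delta Z$, drop the nonnegative $|\Delta Z|^2$ term, and bound the cross term $\frac{\kappa_F}{1-\lambda_1}\langle\sum\lambda_i^2\Delta V_i,\Delta Z\rangle$. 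Cauchy--Schwarz with $(\sum\lambda_i^2)^{1/2}\le1-\lambda_1$, followed by Young's inequality, bounds it below by $-\frac{\kappa_F}{4}\sum\lambda_i^2|\Delta V_i|^2-\kappa_F|\Delta Z|^2$. This yields \eqref{eq:collective-residual-bound} with exactly the stated constants.

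\textbf{Stability estimate.} For \eqref{eq:collective-stability} we repeat Subsections \ref{subsec:gluing}--\ref{subsec:proof-main} verbatim with $R,B,V_i$ replaced by $R_F,Z^F,V^F_i$. The glued coupling $\bar\gamma$ from Lemma \ref{lem:gluing-coupling} and Corollary \ref{cor:gluing-estimates} are purely kinematic and do not see the cost. We integrate \eqref{eq:collective-residual-bound} against $\hat\gamma$, use $\int R_F\,\d\hat\gamma=\int c_F\,\d\hat\gamma-\int c_F\,\d\gamma$, and finish with the same triangle inequalities and the same reconstruction $T^F_i-\tilde T^F_i=(V^F_i-\tilde V^F_i)+(Z^F-\tilde Z^F)/(1-\lambda_1)$. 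The factor $\kappa_F^{-1}$ enters when dividing by $\kappa_F/4$.

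\textbf{Main obstacle: Lemma \ref{cost-difference}.} The only step needing care is the cost-difference bound, since it uses a Lipschitz bound $|\nabla_{x_i}c_F|\le\lambda_i(D+\sup_\Omega|\nabla F|)$. The constant may not depend on $\sup|\nabla F|$, so we first normalize $F$. Replace $F$ by $F-\langle\nabla F(z_0),\cdot\rangle$ for a fixed $z_0\in\Omega$. This changes $c_F$ by $-\sum_i\lambda_i\langle\nabla F(z_0),x_i\rangle$, a sum of single-variable functions. Such a change leaves the optimal maps, $Z^F$, and the defect $R_F$ unchanged, after absorbing the affine terms into the potentials. After normalization, the curvature bounds give $|\nabla F|\le\max(1,\tau_F)D$ on $\Omega$. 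Hence $c_F$ is $\lambda_i(1+\max(1,\tau_F))D$-Lipschitz in $x_i$, and the constant $2D\delta$ in Lemma \ref{cost-difference} becomes $C(\tau_F)D\delta$. This gives the stated dependence of $C$ on $N$, $D$, $\kappa_F$ and $\tau_F$ only.
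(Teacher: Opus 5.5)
Your proposal is correct and follows the paper's proof essentially step for step. It uses the same $c$-conjugacy argument with Taylor bounds ($-\tau_F$ for the full move $\Delta Z$, $1-\kappa_F$ for the one-coordinate moves $\lambda_i\Delta x_i$), the same Cauchy--Schwarz/Young step giving exactly the constants $\frac{\kappa_F}{4}$ and $\frac{1+\tau_F+2\kappa_F}{2}$, and the same affine normalization of $F$ at a point $z_0\in\Omega$ to make $c_F$ Lipschitz before rerunning the gluing argument of Theorem \ref{main}; your intermediate bound $R_F\ge\frac{\kappa_F}{2}\sum_{i\ge2}\lambda_i^2|\Delta x_i|^2-\frac{1+\tau_F}{2}|\Delta Z|^2$ is simply a cleaner packaging of the paper's chain of inequalities.
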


\begin{proof}
	Fix $x_1\in S$ such that
	\begin{equation}\label{eq:collective-defect-zero}
		R_F(x_1,T_2^F(x_1),\dots,T_N^F(x_1))=0.
	\end{equation}
For $i=2,\dots,N$, denote
\[
    \Delta x_i:=x_i-T_i^F(x_1),
    \quad
    \Delta Z:=Z_x-Z^F(x_1)=\sum_{i=2}^N\lambda_i\Delta x_i, \quad \Delta V_i:=V_{i,x}-V^F_i(x_1).
\]
Then
\begin{equation}\label{eq:collective-delta-v}
	  \Delta V_i
	=
	\Delta x_i-\frac{\Delta Z}{1-\lambda_1},
	\quad i=2,\dots,N,\quad \text{and}\quad
	\sum_{i=2}^N\lambda_i\Delta V_i=0.
\end{equation}

For \(h\in\mathbb R^n\), define the Taylor remainder
\[
\mathcal R_F(h)
:=
F(Z^F(x_1)+h)-F(Z^F(x_1))-\langle \nabla F(Z^F(x_1)),h\rangle.
\]
By Taylor's formula and
\eqref{eq:F-curvature}, we have
\begin{equation}\label{eq:taylor-remainder-bounds}
	\mathcal R_F(\Delta Z)
	\ge
	-\frac{\tau_F}{2}|\Delta Z|^2,\quad \mathcal R_F(\lambda_i\Delta x_i)\le
	\frac{1-\kappa_F}{2}\lambda_i^2|\Delta x_i|^2, \quad  i=2,\dots,N.
\end{equation}

Therefore, by an argument similar to that in Proposition \ref{prop:kantorovich-defect-lower}, we have
\begin{equation}
\begin{aligned}
		&R_F(x_1,\dots, x_N)\\
		\overset{\eqref{eq:collective-defect-zero}}{=}&R_F(x_1,\dots, x_N)-R_F(x_1,T_2^F(x_1),\dots,T_N^F(x_1))\\
		\overset{*}{\geq}&	\frac{\lambda_1}{2(1-\lambda_1)}|\Delta Z|^2
		+\frac12\sum_{i=2}^N\lambda_i|\Delta V_i|^2
		-\sum_{i=2}^N\frac{\lambda_i(1-\lambda_i)}{2}|\Delta x_i|^2
		+\mathcal R_F(\Delta Z)
		-\sum_{i=2}^N\mathcal R_F(\lambda_i\Delta x_i)\\
	\overset{\eqref{eq:taylor-remainder-bounds}}{\geq}& 	\frac{\lambda_1}{2(1-\lambda_1)}|\Delta Z|^2
	+\frac12\sum_{i=2}^N\lambda_i|\Delta V_i|^2
	-\sum_{i=2}^N\frac{\lambda_i(1-\kappa_F\lambda_i)}{2}|\Delta x_i|^2-\frac{\tau_F}{2}|\Delta Z|^2\\
	\overset{\eqref{eq:collective-delta-v}}{=}&
	\frac{\kappa_F}{2}\sum_{i=2}^N\lambda_i^2|\Delta V_i|^2
	+
	\frac{\kappa_F}{1-\lambda_1}
	\left\langle
	\sum_{i=2}^N\lambda_i^2\Delta V_i,\Delta Z
	\right\rangle+
	\left(
	\frac{\kappa_F}{2(1-\lambda_1)^2}
	\sum_{i=2}^N\lambda_i^2
	-\frac{1+\tau_F}{2}
	\right)|\Delta Z|^2\\
		\overset{**}{\geq}&
		\frac{\kappa_F}{4}\sum_{i=2}^N\lambda_i^2|\Delta V_i|^2
		-
		\frac{1+\tau_F+2\kappa_F}{2}|\Delta Z|^2,
	\end{aligned}
\end{equation}
where \((*)\) uses the same \(c\)-conjugacy argument as in Proposition \ref{prop:kantorovich-defect-lower}, and
\((**)\) follows from H\"older's inequality and Young's inequality.

We now derive the stability estimate. By Lemma \ref{lem:gluing-coupling}, there exists a coupling $\bar\gamma
\in
\Pi(\rho,\mu_2,\dots,\mu_N,\tilde\mu_2,\dots,\tilde\mu_N)$, such that
\[
(x_1,\tilde x_2,\ldots,\tilde x_N)_\#\bar\gamma
=
(\Id,\tilde T_2^F,\ldots,\tilde T_N^F)_\#\rho,\quad
(x_i,\tilde x_i)_\#\bar\gamma=\pi_i,\quad i=2,\ldots,N,
\]
where $\pi_i\in\Pi(\mu_i,\tilde{\mu}_i)$ is a $W_1$-optimal transport plan between $\mu_i$ and $\tilde{\mu}_i$.
Recall $\delta=\sum_{i=2}^N\lambda_iW_1(\mu_i,\tilde\mu_i)$, $D=\diam(S\cup Y)$. By Corollary \ref{cor:gluing-estimates}, with \(B\) replaced by \(Z\), we have
\begin{equation}
	\label{eq:Z-gluing}
	\left(
	\int |Z_x-\tilde Z^F(x_1)|^2\,\d\bar\gamma
	\right)^{1/2}
	\le
	D^{1/2}\delta^{1/2},\quad 	\left(
	\int\sum_{i=2}^N\lambda_i^2
	|V_{i,x}-\tilde V_i^F(x_1)|^2\,\d\bar\gamma
	\right)^{1/2}
	\le
	D^{1/2}\delta^{1/2}.
\end{equation}

Moreover, fix $z_0\in\Omega$ and define
\begin{equation*}
	\bar F(z)
	:=
	F(z)-F(z_0)-\langle\nabla F(z_0),z-z_0\rangle.
\end{equation*}
Since \(\Omega\) is convex, $D^2\bar{F}=D^2F$ and $\nabla \bar F(z_0)=0$. By \eqref{eq:F-curvature}, we have
\begin{equation}\label{eq:barF-lipschitz}
	\operatorname{Lip}(\bar F\restr\Omega)
	\leq
	D\max\{\tau_F,1-\kappa_F\}.
\end{equation}

On the other hand, note that
\begin{equation}
	c_0(x_1,\dots,x_N)+\bar F(Z_x)
	=
	c_F(x_1,\dots,x_N)
	-
	\sum_{i=1}^N\lambda_i
	\langle \nabla F(z_0),x_i\rangle
	+
	\langle \nabla F(z_0),z_0\rangle
	-
	F(z_0),
\end{equation}
which implies the multi-marginal optimal transport problem is unchanged. Moreover, if the
Kantorovich potentials are shifted by the corresponding marginal affine
functions, then the residual
$c_F-\sum_{i=1}^N\varphi_i^F$
is unchanged as well. Hence the stability argument may be carried out with
\(\bar F\) in place of \(F\). Combining with \eqref{eq:barF-lipschitz}, the cost \(c_F\) satisfies
\begin{equation}
	\label{eq:cF-Lipschitz}
	|c_F(x_1,x_2,\dots,x_N)-c_F(x_1,\tilde{x}_2,\dots,\tilde{x}_N)|
	\leq C_5\sum_{i=2}^N\lambda_i|x_i-\tilde{x}_i|,
\end{equation}
where \(C_5\) depends on \(D,\tau_F,\kappa_F\).

Using the analogue of Lemma \ref{cost-difference}, with \eqref{eq:cF-Lipschitz} in place of \eqref{eq:cost-lipschitz}, and repeating the proof of
Theorem \ref{main}, we obtain the stability estimate \eqref{eq:collective-stability}.
\end{proof}

The estimate in \eqref{eq:collective-stability} leaves the collective term
$\|Z^F-\tilde Z^F\|_{L^2(\rho)}$ to be controlled. For \(F=0\), the map \(Z^F\) is the Wasserstein barycenter map used in
Theorem \ref{multi-marginal}. For a general \(F\), \(Z^F\) need not be an optimal transport map. The useful external mode is the transformed collective map $H^F:=Z^F-\nabla F(Z^F)$, which can be identified with an optimal transport map associated with a penalized barycenter problem. The following proposition makes this reduction precise.
\begin{proposition}\label{prop:collective-full-stability}
	Let $\rho\in\mathcal P_{\mathrm{ac}}(S)\cap L^\infty(S)$ and let \(c_F\) be defined by \eqref{eq:collective-cost}. With the notation of Proposition \ref{prop:collective-coercivity}, there exists a constant $C>0$, depending on $\lambda_1, N, \diam(S\cup Y),\kappa_F, \tau_F, n$ and $\|\rho\|_{L^\infty}$, such that
	\begin{equation*}
		\sum_{i=2}^N\lambda_i\|T_i^F-\tilde T_i^F\|_{L^2(\rho)}
		\leq C\left(\sum_{i=2}^N \lambda_iW_1(\mu_i,\tilde\mu_i)\right)^{1/4}.
	\end{equation*}
\end{proposition}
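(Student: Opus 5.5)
The plan is to combine the two-mode estimate \eqref{eq:collective-stability} of Proposition \ref{prop:collective-coercivity} with a quarter-power bound on the collective term $\|Z^F-\tilde Z^F\|_{L^2(\rho)}$. The argument follows the proof of Theorem \ref{multi-marginal}, with the barycenter map $B$ replaced by the transformed collective map $H^F=H\circ Z^F$, where $H(z)=z-\nabla F(z)$.

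\textbf{Step 1 (from $Z^F$ to $H^F$).} By the strong monotonicity $\langle H(z)-H(z'),z-z'\rangle\ge\kappa_F|z-z'|^2$, we have $|Z^F-\tilde Z^F|\le \kappa_F^{-1}|H^F-\tilde H^F|$ pointwise. Hence it suffices to prove
\[
\|H^F-\tilde H^F\|_{L^2(\rho)}\le C\delta^{1/4}.
\]

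\textbf{Step 2 ($H^F$ is a Brenier map).}
\begin{itemize}
\item Since $\varphi_1^F$ is a $c_F$-transform, it is an infimum over $(x_2,\dots,x_N)$ of $c_F-\sum_{i\ge2}\varphi_i^F$.
\item The $x_1$-Hessian of $c_F$ is $\lambda_1(1-\lambda_1)I+\lambda_1^2D^2F\le \lambda_1 I$ by \eqref{eq:F-curvature}. Therefore $u(x_1):=\frac12|x_1|^2-\lambda_1^{-1}\varphi_1^F(x_1)$ is a supremum of convex functions, hence convex.
\item At $\rho$-a.e.\ differentiability point, the envelope argument together with $D_{x_1}c_F=\lambda_1(x_1-H(Z_x))$ gives $\nabla\varphi_1^F=\lambda_1(\Id-H^F)$, that is, $H^F=\nabla u$.
\end{itemize}
By Brenier's theorem, $H^F$ is the unique quadratic optimal map from $\rho$ to $\nu^F:=(H^F)_\#\rho$, which is supported in the compact set $H(\Omega)$. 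The same holds for $\tilde H^F$ and $\tilde\nu^F$. M\'erigot's proposition, applied on a slightly enlarged compact set, then yields
\[
\|H^F-\tilde H^F\|_{L^2(\rho)}^4\le C_2\int(\psi-\tilde\psi)\,\d(\nu^F-\tilde\nu^F),
\]
where $\psi=u^*$ and $\tilde\psi=\tilde u^*$ are the dual potentials.

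\textbf{Step 3 (bounding the M\'erigot defect by $\delta$).} This is the main obstacle. There are two possible routes.
\begin{itemize}
\item \emph{Penalized barycenter route.} Here $\nu^F$ is identified as the minimizer of a penalized barycenter functional $\mathrm{Var}^F_{\mathbb P}$. One then copies the proof of Theorem \ref{multi-marginal}: a lower bound of the symmetric variance gap by $\lambda_1 D_\rho$, plus the gluing bound $\mathrm{Var}^F_{\tilde{\mathbb P}}(\nu^F)-\mathrm{Var}^F_{\mathbb P}(\nu^F)\lesssim\delta$.
\item \emph{Direct route (the one I would try first).} It stays inside the multi-marginal duality. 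Using $\psi(H^F)=\langle x_1,H^F\rangle-u(x_1)$ $\rho$-a.e., rewrite
\[
\int(\psi-\tilde\psi)\,\d(\nu^F-\tilde\nu^F)=\int\big[(u-\tilde u)(x_1)-\langle x_1,\,H^F-\tilde H^F\rangle\big]\,\d\rho+\text{(symmetric terms)}.
\]
Then express $u,\tilde u$ through $\varphi_1^F,\tilde\varphi_1^F$. The resulting combination should be a sum of integrated residuals $\int R_F\,\d\hat\gamma+\int\tilde R_F\,\d\check\gamma$ along the glued plans of Lemma \ref{lem:gluing-coupling}, rescaled by $\lambda_1^{-1}$. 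By the analogue of Lemma \ref{cost-difference} with the Lipschitz bound \eqref{eq:cF-Lipschitz}, each such residual integral is at most $2C_5\delta$.
\end{itemize}
The delicate point is verifying that the $F$-terms, which are nonlinear in $x_1$, are absorbed correctly into this identity. If this fails, I fall back on the penalized barycenter route, where the balance condition on potentials plays the role of \eqref{eq:barycenter-potential-balance}.

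\textbf{Conclusion.} We obtain $\|H^F-\tilde H^F\|_{L^2(\rho)}\le C\lambda_1^{-1/4}\delta^{1/4}$, hence $\|Z^F-\tilde Z^F\|_{L^2(\rho)}\le C\kappa_F^{-1}\delta^{1/4}$. Inserting this into \eqref{eq:collective-stability} and using $\delta^{1/2}\le D^{1/4}\delta^{1/4}$ (since $\delta\le D$) gives the claim.
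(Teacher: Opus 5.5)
\textbf{Verdict: there is a genuine gap in Step 3, though it can be repaired.}

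Steps 1 and 2 are sound. Step 1 is exactly the paper's \eqref{E}. Step 2 obtains convexity of $u=\frac12|x_1|^2-\lambda_1^{-1}\varphi_1^F$ from $D^2_{x_1}c_F\le\lambda_1 I$ and then $\nabla u=H^F$ from the envelope identity. This is a more self-contained justification of the Brenier property than the paper's appeal to the Agueh--Carlier correspondence.

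The gap is Step 3, which carries the whole proposition. The bound your preferred direct route aims for is false. Take $F=0$ and Dirac targets $\mu_i=\delta_{a_i}$, $\tilde\mu_i=\delta_{\tilde a_i}$ with $\sum_{i\ge2}\lambda_i a_i\ne\sum_{i\ge2}\lambda_i\tilde a_i$. Then $\Pi(\rho,\mu_2,\dots,\mu_N)$ and $\Pi(\rho,\tilde\mu_2,\dots,\tilde\mu_N)$ are singletons, so $\int R_F\,\d\hat\gamma=\int\tilde R_F\,\d\check\gamma=0$. Meanwhile $H^F-\tilde H^F$ is a nonzero constant, so the M\'erigot defect is at least $C_2^{-1}\|H^F-\tilde H^F\|_{L^2(\rho)}^4>0$. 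Hence the defect is not controlled by residuals alone, and the obstruction has nothing to do with the nonlinearity of $F$.

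What is true is the following. The defect equals $\Delta_1+\Delta_2$, where $\Delta_1:=\int[u(x_1)+u^*(\tilde H^F(x_1))-\langle x_1,\tilde H^F(x_1)\rangle]\,\d\rho\ge0$ and $\Delta_2$ is symmetric. The cross term $u^*(\tilde H^F(x_1))$ can only be bounded from above. To do so, use:
\begin{itemize}
\item $\varphi_1^F\le c_F(\cdot,x_2,\dots,x_N)-\sum_{i\ge2}\varphi_i^F(x_i)$ at the glued points;
\item $c_F(x)=\sum_i\frac{\lambda_i}{2}|x_i|^2-G(Z_x)$ with $G:=\frac12|\cdot|^2-F$;
\item $\sup_{z\in\Omega}\{\langle z,\nabla G(\tilde z)\rangle-G(z)\}$ is attained at $\tilde z$.
\end{itemize}
This gives
\[
\lambda_1\Delta_1\le\int R_F\,\d\hat\gamma+\int\bigl[G(Z_x)-G(\tilde Z^F(x_1))-\langle\nabla G(\tilde Z^F(x_1)),Z_x-\tilde Z^F(x_1)\rangle\bigr]\,\d\bar\gamma .
\]
The Bregman term does not vanish even for $F=0$. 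Since $D^2G\le(1+\tau_F)I$, it is at most $\frac{1+\tau_F}{2}\int|Z_x-\tilde Z^F(x_1)|^2\,\d\bar\gamma\le\frac{1+\tau_F}{2}D\delta$ by \eqref{eq:Z-gluing}. With this term kept, the direct route closes. It is then a legitimate alternative to the paper's argument; in effect it rederives the balance inequality \eqref{pen} from $c_F$-conjugacy.

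Your fallback is the paper's proof, but as sketched it omits the key idea: you never say what the penalty is or why $\nu^F$ minimizes it. The paper writes
\[
c_F(x)=\inf_y\Bigl\{\sum_i\frac{\lambda_i}{2}|x_i-y|^2+V(y)\Bigr\},\qquad V:=G_K^*-\tfrac12|\cdot|^2,
\]
where $G_K^*$ is the Legendre transform of $G$ restricted to a compact convex $K$ with $\Omega\Subset K$, and the unique minimizer is $\nabla G(Z_x)$. This identifies $\nu^F$ as the minimizer of $\Var_{\mathbb P}(\nu)+\int V\,\d\nu$ and yields \eqref{pen}. Since $V$ is the same for $\mathbb P$ and $\tilde{\mathbb P}$, it cancels in the symmetric variance gap, and the proof of Theorem \ref{multi-marginal} then goes through.

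Two smaller points:
\begin{itemize}
\item With $\psi=u^*$, your M\'erigot inequality has the wrong sign: $\int(u^*-\tilde u^*)\,\d(\nu^F-\tilde\nu^F)=-(\Delta_1+\Delta_2)\le0$. Use $\psi=\frac12|\cdot|^2-u^*$, as in the paper.
\item To get a constant depending only on the listed parameters, you must first normalize $\nabla F(z_0)=0$ (equivalently, translate $H^F$ and $\tilde H^F$ by a common vector). Otherwise $H(\Omega)$ can lie at distance of order $|\nabla F|$ from $S$, and M\'erigot's constant depends on $\diam(S\cup H(\Omega))$.
\end{itemize}
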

\begin{proof}
	As in the proof of Proposition \ref{prop:collective-coercivity}, we fix $z_0\in\Omega$ and replace \(F\) by the normalized function
	\begin{equation*}
		\bar F(z)
		:=
		F(z)-F(z_0)-\langle\nabla F(z_0),z-z_0\rangle.
	\end{equation*}
	This does not change the multi-marginal optimal transport maps. After relabeling
	\(\bar F\) as \(F\), we may assume that \(\nabla F(z_0)=0\).

	Let \(K\) be a compact convex set such that
\(\Omega\Subset K\Subset U\). Define
\[
    G(z):=\frac12|z|^2-F(z),\qquad
    G_K^*(y):=\sup_{z\in K}\{\langle z,y\rangle-G(z)\},
    \qquad
    V(y):=G_K^*(y)-\frac12|y|^2.
\]
Here $G_K^*$ is the restricted Legendre transform of $G$ and by \eqref{eq:F-curvature}, $D^2G\geq \kappa_F I$.

Since $G$ is convex and \(Z_x=\sum_{i=1}^N\lambda_i x_i\in\Omega\Subset K\), the restricted transform satisfies
\begin{equation}
	G(Z_x)=\sup_{y\in\mathbb R^n}
	\{\langle Z_x,y\rangle-G_K^*(y)\}.
\end{equation}
Then we have
\begin{equation}
\begin{aligned}
		c_F(x_1,\ldots,x_N)&=\sum_{i=1}^N\frac{\lambda_i}{2}|x_i|^2
		-\sup_{y\in\mathbb R^n}
		\{\langle Z_x,y\rangle-G_K^*(y)\}\\
		&=\inf_{y\in\mathbb R^n}
		\left\{
		\sum_{i=1}^N\frac{\lambda_i}{2}|x_i-y|^2+V(y)
		\right\}.
\end{aligned}
\end{equation}
	Moreover, by Fenchel-Young equality, the unique minimizer is $\nabla G(Z_x)$.

Consequently, define
\[
    H^F:=\nabla G(Z^F)=Z^F-\nabla F(Z^F),
    \quad
    \mu_{\mathbb P}^F:=H^F_\#\rho.
\]
Then Agueh--Carlier's barycenter--multi-marginal correspondence
\cite{zbMATH05956494} applies after the change of collective
coordinate \(Z^F\mapsto H^F=\nabla G(Z^F)\). It follows that \(\mu_{\mathbb P}^F\) minimizes the penalized barycenter functional
\[
   \Var_{\mathbb P}(\nu)+\int V\,\d\nu,
    \quad
    \mathbb P=\lambda_1\delta_\rho+\sum_{i=2}^N\lambda_i\delta_{\mu_i},
\]
and $H^F$ is the unique optimal transport map from $\rho$ to $\mu_\mathbb{P}^F$, for the quadratic cost. Similarly, $\tilde H^F:=\tilde Z^F-\nabla F(\tilde Z^F)$ is the unique optimal transport map from $\rho$ to $\tilde\mu_\mathbb{P}^F:=\tilde H^F_\# \rho$.

Note that the preceding affine normalization only translates both \(H^F\) and \(\tilde H^F\) by the same vector. Hence \(H^F-\tilde H^F\) and \(Z^F-\tilde Z^F\) are unchanged. Moreover, after this normalization, the measures \(\mu_{\mathbb P}^F\) and \(\tilde\mu_{\mathbb P}^F\) are supported in a common compact set \(\Omega_F\), whose diameter is controlled by \(\diam(S\cup Y)\) and \(\tau_F\).

We use the standard Kantorovich-duality/subdifferential formulation
for penalized Wasserstein barycenters, as in
Bigot--Cazelles--Papadakis~\cite[Theorem~2.11]{BigotCazellesPapadakis2019}.
In the present notation, it gives Kantorovich potentials
\(\{\psi_{\mu_{{\mathbb P}}}^{F,i}\}_{i=1}^N\) from \(\mu_{\mathbb P}^F\) to \(\mu_i\) with \(\mu_1=\rho\), such that, after adding constants,
\begin{equation}\label{pen}
	\sum_{i=1}^N\lambda_i\psi_{\mu_{{\mathbb P}}}^{F,i}(y)+V(y)\ge 0
	\quad\text{on }\Omega_F,
	\qquad
	\sum_{i=1}^N\lambda_i\psi_{\mu_{{\mathbb P}}}^{F,i}+V=0
	\quad \mu_{\mathbb P}^F\text{-a.e.}
\end{equation}
Indeed, \eqref{pen} is just the first-order variational inequality for the functional
\(\Var_{\mathbb P}(\nu)+\int V\,\d\nu\), written using Kantorovich potentials as subgradients of the maps
\(\nu\mapsto \frac12 W_2^2(\nu,\mu_i)\).
Similarly, there exist Kantorovich potentials
\(\{\psi_{\mu_{\tilde{\mathbb P}}}^{F,i}\}_{i=1}^N\) satisfying the analogue of \eqref{pen} for \(\tilde{\mathbb P}\) and \(\tilde\mu_{\mathbb P}^F\).

Thus, with notation analogous to \eqref{2.30} and \eqref{2.31}, we have
	\begin{equation}
	\Var_{\mathbb P}(\mu_{\tilde{\mathbb P}}^F)+\int V\,\d\mu_{\tilde{\mathbb P}}^F-\Var_{\mathbb P}(\mu_{\mathbb P}^F)-\int V\,\d\mu_{{\mathbb P}}^F\geq \lambda_1D_\rho(\mu_{\tilde{\mathbb P}}^F,\mu_{\mathbb P}^F),
\end{equation}
and
\begin{equation}
	\Var_{\tilde{\mathbb P}}(\mu_{\mathbb P}^F)+\int V\,\d\mu_{{\mathbb P}}^F-\Var_{\tilde{\mathbb P}}(\mu_{\tilde{\mathbb P}}^F)-\int V\,\d\mu_{\tilde{\mathbb P}}^F\geq \lambda_1 D_\rho(\mu_{\mathbb P}^F, \mu_{\tilde{\mathbb P}}^F),
\end{equation}
which implies
\begin{equation}\label{D}
		\Var_{\mathbb P}(\mu_{\tilde{\mathbb P}}^F)-\Var_{\mathbb P}(\mu_{\mathbb P}^F)+\Var_{\tilde{\mathbb P}}(\mu_{\mathbb P}^F)-\Var_{\tilde{\mathbb P}}(\mu_{\tilde{\mathbb P}}^F)\geq \lambda_1 \left(D_\rho(\mu_{\tilde{\mathbb P}}^F,\mu_{\mathbb P}^F)+D_\rho(\mu_{\mathbb P}^F, \mu_{\tilde{\mathbb P}}^F)\right).
\end{equation}
Here the two \(V\)-terms cancel when the two preceding variational inequalities are added.

Note that by \eqref{eq:F-curvature}, it holds that
\begin{equation}\label{E}
	\|Z^F-\tilde{Z}^F\|_{L^2(\rho)}\leq \kappa_F^{-1}\|H^F-\tilde{H}^F\|_{L^2(\rho)}.
\end{equation}

Combining \eqref{D}, \eqref{E} and Proposition \ref{prop:collective-coercivity} with M\'erigot's estimate \cite[Theorem 2.2]{merigot:hal-05616391}, and repeating the proof of Theorem \ref{multi-marginal}, completes the proof.
\end{proof}

Propositions~\ref{prop:collective-coercivity} and~\ref{prop:collective-full-stability} reproduce the two stages of the main argument. The affine collective coordinate $Z^F$ retains the same internal variables and a quadratic relative-coercivity estimate of the same form as for the unperturbed cost. The transformed map $H^F=Z^F-\nabla F(Z^F)$ supplies the external mode: it is an optimal transport map to a penalized Wasserstein barycenter, and the strong monotonicity of $z\mapsto z-\nabla F(z)$ transfers its stability back to $Z^F$.

\subsection{Uniformly concave costs of the sum}\label{subsec:sum-costs}
Consider costs of the sum, in the form studied by
Gangbo--\'Swi\c{e}ch and Heinich \cite{zbMATH01060715,heinich2002monge}:
\begin{equation}\label{eq:sum-cost}
    c_h(x_1,\dots,x_N)=h\left(\sum_{i=1}^N\lambda_i x_i\right),
\end{equation}
where \(h\in C^2(U)\) for an open neighbourhood \(U\supseteq\Omega\). Since
adding functions of the individual variables does not change the optimal transport
plans, the cost \eqref{eq:sum-cost} is equivalent to the cost function
\begin{equation}\label{eq:sum-to-collective}
    c_0(x_1,\dots,x_N)+F_h(Z_x),
    \quad F_h(z):=h(z)+\frac12|z|^2.
\end{equation}
The quadratic cost \eqref{cost} corresponds to \(h(z)=-\frac12|z|^2\), and
then \(F_h=0\). The following result shows that the uniformly concave costs of the sum are covered by
Proposition \ref{prop:collective-full-stability}.

\begin{corollary}\label{cor:sum-costs}
Assume that \(h\in C^2(U)\) with \(U\supseteq\Omega\), satisfies, as quadratic forms on \(U\),
\begin{equation}\label{eq:h-uniform-concavity}
    -\tau_h I\leq D^2h\leq -\kappa_h I,
\end{equation}
for some constants \(0<\kappa_h\le \tau_h<\infty\). Let $\rho\in\mathcal P_{\mathrm{ac}}(S)\cap L^\infty(S)$ and let \(c_h\) be defined by \eqref{eq:sum-cost}. Then there exists a constant
\(C>0\), depending only on $\lambda_1, N, \diam(S\cup Y),\kappa_h, \tau_h, n$, and $\|\rho\|_{L^\infty}$, such that
\begin{equation*}
    \sum_{i=2}^N \lambda_i
    \|T^h_i-\tilde T^h_i\|_{L^2(\rho)}
    \leq C\left(\sum_{i=2}^N \lambda_iW_1(\mu_i,\tilde\mu_i)\right)^{1/4},
\end{equation*}
where
\((T_2^h,\dots,T_N^h)\) and
\((\tilde T_2^h,\dots,\tilde T_N^h)\) are the unique multi-marginal optimal transport maps associated with $(\rho,\mu_2,\dots,\mu_N)$ and $(\rho,\tilde\mu_2,\dots,\tilde\mu_N)$, respectively.
\end{corollary}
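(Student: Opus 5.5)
The plan is to reduce Corollary \ref{cor:sum-costs} to Proposition \ref{prop:collective-full-stability} via the equivalence \eqref{eq:sum-to-collective}.

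First, we check that $c_h$ and $c_0+F_h(Z_x)$ have the same optimal plans and maps. Expanding $\frac12|Z_x|^2$ and using \eqref{equal},
\[
c_0(x)+F_h(Z_x)=\tfrac12\sum_{i=1}^N\lambda_i|x_i|^2-\tfrac12|Z_x|^2+h(Z_x)+\tfrac12|Z_x|^2=h(Z_x)+\sum_{i=1}^N\tfrac{\lambda_i}{2}|x_i|^2 .
\]
So the two costs differ by a sum of functions of the individual variables. Integrated against any $\pi\in\Pi(\rho,\mu_2,\dots,\mu_N)$, this sum gives a constant depending only on the marginals, so the Kantorovich problems share minimizers. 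Their $c$-conjugate potentials correspond by adding $\frac{\lambda_i}{2}|x_i|^2$ to $\varphi_i$. Hence $(T_i^h)$ coincide with the maps $(T_i^{F_h})$ of Proposition \ref{prop:collective-coercivity}, and likewise $(\tilde T_i^h)=(\tilde T_i^{F_h})$.

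Second, we verify the curvature hypothesis \eqref{eq:F-curvature} for $F=F_h$. Since $D^2F_h=D^2h+I$, assumption \eqref{eq:h-uniform-concavity} gives
\[
(1-\tau_h)I\le D^2F_h\le (1-\kappa_h)I .
\]
The lower bound yields $D^2F_h\ge-\tau_F I$ with $\tau_F:=\max\{\tau_h-1,0\}\le\tau_h$. For the upper bound, set $\kappa_F:=\min\{\kappa_h,1\}\in(0,1]$; then $1-\kappa_h\le 1-\kappa_F$, so \eqref{eq:F-curvature} holds. Also $F_h\in C^2(U)$ with $U\supseteq\Omega$ open, and $\rho\in\mathcal P_{\mathrm{ac}}(S)\cap L^\infty(S)$ as required.

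Third, we apply Proposition \ref{prop:collective-full-stability} with $F=F_h$. This gives the $\frac14$-estimate with a constant depending on $\lambda_1,N,\diam(S\cup Y),\kappa_F,\tau_F,n,\|\rho\|_{L^\infty}$. Since $\kappa_F,\tau_F$ are explicit functions of $\kappa_h,\tau_h$, this is the claimed dependence.

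The only delicate point is the first step: making sure the existence and uniqueness of the maps, and the potentials used in the residual $R_F$, transfer under the affine-in-marginals change of cost. Everything else is bookkeeping. The transfer is immediate, because the residual $c-\sum_i\varphi_i$ is invariant under the simultaneous shift of cost and potentials. Uniqueness for $c_h$ then follows from uniqueness for $c_{F_h}$ via the Kim--Pass twist argument already recorded before Proposition \ref{prop:collective-coercivity}.
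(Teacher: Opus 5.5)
Your proposal is correct and follows the paper's proof. You reduce the corollary to Proposition \ref{prop:collective-full-stability} via $F_h=h+\frac12|\cdot|^2$, with the same choices $\kappa_F=\min\{\kappa_h,1\}$ and $\tau_F=\max\{\tau_h-1,0\}$; your explicit check that $c_0+F_h(Z_x)$ and $c_h$ differ by $\sum_i\frac{\lambda_i}{2}|x_i|^2$ only spells out the remark the paper makes just before the corollary.
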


\begin{proof}
	The existence and uniqueness of multi-marginal optimal transport maps follow from \cite[Theorem 3.1]{kim2014general}.

	Denote \(\kappa_*:=\min\{\kappa_h,1\}\), and \(\tau_*:=\max\{\tau_h-1,0\}\). Then by \eqref{eq:sum-to-collective} and \eqref{eq:h-uniform-concavity},
\begin{equation*}
	  D^2F_h\ge (1-\tau_h)I\ge -\tau_*I,
	\quad
	D^2F_h\le (1-\kappa_h)I\le (1-\kappa_*)I,
\end{equation*}
which means that \eqref{eq:F-curvature} holds with \(\kappa_F=\kappa_*\) and \(\tau_F=\tau_*\). Hence Proposition \ref{prop:collective-full-stability} proves the corollary.
\end{proof}

\subsection{From cyclic interactions to complete-graph symmetry}\label{sec:cyclic-symmetrization}
We next isolate a simple symmetry principle behind complete-graph interaction
costs. This discussion is not used in the proof of the stability estimates above,
but it explains why the fully symmetric quadratic cost studied in the main part
of the paper is naturally related to cyclic nearest-neighbour interactions.
The observation is purely combinatorial and therefore does not depend on the
Euclidean structure of the ambient space.

Let \(X\) be a set and let \(k:X\times X\to\mathbb R\) be a function. Define
\begin{equation}\label{eq:cyclic-cost}
    G_{\rm cyc}(x_1,\dots,x_N)
    :=
    \frac1N\sum_{i=1}^N k(x_i,x_{i+1}),
    \qquad x_{N+1}:=x_1.
\end{equation}
Then \(G_{\rm cyc}\) is invariant under the cyclic group generated by
\((1\,2\,\cdots\,N)\), but it need not be invariant under the full symmetric
group \(S_N\). Its full symmetrization is the complete-graph average.

\begin{proposition}[Full symmetrization of a cyclic cost]\label{prop:cyclic-symmetrization}
 Define \(G_{\rm cyc}\)
by \eqref{eq:cyclic-cost}. Then
\begin{equation}\label{eq:cyclic-full-symmetrization}
    \frac1{N!}\sum_{\sigma\in S_N}
    G_{\rm cyc}(x_{\sigma(1)},\dots,x_{\sigma(N)})
    =
    \frac{1}{N(N-1)}\sum_{i\ne j} k(x_i,x_j).
\end{equation}
If \(k\) is symmetric, then
\begin{equation}\label{eq:cyclic-full-symmetrization-symmetric}
    \frac1{N!}\sum_{\sigma\in S_N}
    G_{\rm cyc}(x_{\sigma(1)},\dots,x_{\sigma(N)})
    =
    \frac{2}{N(N-1)}\sum_{1\le i<j\le N} k(x_i,x_j).
\end{equation}
\end{proposition}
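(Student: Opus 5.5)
The identity \eqref{eq:cyclic-full-symmetrization} is a counting statement, so the plan is to expand the average over $S_N$ term by term and count, for each ordered pair $(i,j)$ with $i\neq j$, how often $k(x_i,x_j)$ appears. Substituting \eqref{eq:cyclic-cost} and exchanging the two finite sums gives
\[
\frac1{N!}\sum_{\sigma\in S_N}G_{\rm cyc}(x_{\sigma(1)},\dots,x_{\sigma(N)})
=\frac{1}{N\cdot N!}\sum_{m=1}^N\sum_{\sigma\in S_N}k\bigl(x_{\sigma(m)},x_{\sigma(m+1)}\bigr),
\]
with the convention $\sigma(N+1):=\sigma(1)$. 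Since $\sigma$ is a bijection, $\sigma(m)\neq\sigma(m+1)$, so every term has the form $k(x_i,x_j)$ with $i\neq j$. No diagonal terms $k(x_i,x_i)$ occur, and no properties of $X$ or $k$ are needed.

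For fixed $m$ and a fixed ordered pair $(i,j)$ with $i\ne j$, I will count the permutations with $\sigma(m)=i$ and $\sigma(m+1)=j$. Because $m$ and $m+1$ are distinct positions modulo $N$ (using $N\ge 2$; in the paper $N\ge3$), these two constraints leave $N-2$ positions to be filled freely. The count is therefore $(N-2)!$, independent of $m$ and of $(i,j)$. Summing over the $N$ values of $m$, each ordered pair contributes $N(N-2)!$ times, and the left-hand side equals
\[
\frac{N(N-2)!}{N\cdot N!}\sum_{i\neq j}k(x_i,x_j)=\frac{1}{N(N-1)}\sum_{i\ne j}k(x_i,x_j),
\]
which is \eqref{eq:cyclic-full-symmetrization}.

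For \eqref{eq:cyclic-full-symmetrization-symmetric}, split the sum over ordered pairs into the pairs with $i<j$ and those with $i>j$. When $k$ is symmetric, the two halves are equal, which gives the factor $2$.

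The only step needing care is the wrap-around term $m=N$, which pairs the positions $N$ and $1$. It has to be checked that these positions are still distinct, so that the count $(N-2)!$ holds there as well; this is immediate for $N\ge 2$. Beyond that there is no real obstacle.
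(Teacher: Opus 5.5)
Your proof is correct and matches the paper's argument. Both count that each ordered pair $(i,j)$ with $i\neq j$ occurs as $(\sigma(\ell),\sigma(\ell+1))$ exactly $N(N-2)!$ times over all $\sigma\in S_N$ and cyclic edges $\ell$, normalize by $N\cdot N!$, and then use symmetry of $k$ to get the factor $2$; your explicit check of the wrap-around edge is a welcome detail the paper leaves implicit.
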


\begin{proof}
Fix an ordered pair \((i,j)\) with \(i\ne j\). In the sum over
\(\sigma\in S_N\) and over cyclic edges \(\ell=1,\dots,N\), the ordered pair
\((x_i,x_j)\) appears as
\((x_{\sigma(\ell)},x_{\sigma(\ell+1)})\) exactly \((N(N-2)!)\) times: after
choosing the cyclic edge \(\ell\), the two values \(i,j\) are fixed at two
neighbouring cyclic positions and the remaining \(N-2\) entries are arbitrary.
Therefore
\[
\begin{aligned}
    \frac1{N!}\sum_{\sigma\in S_N}G_{\rm cyc}(x_{\sigma(1)},\dots,x_{\sigma(N)})
    &=
    \frac1{N!}\frac1N N(N-2)!\sum_{i\ne j}k(x_i,x_j)  \\
    &=
    \frac{1}{N(N-1)}\sum_{i\ne j}k(x_i,x_j).
\end{aligned}
\]
If \(k\) is symmetric, the ordered-pair sum equals twice the sum over
unordered pairs.
\end{proof}

For the quadratic kernel \(k(x,y)=\frac12|x-y|^2\), Proposition
\ref{prop:cyclic-symmetrization} gives
\begin{equation}\label{eq:cyclic-quadratic-complete-graph}
    \frac1{N!}\sum_{\sigma\in S_N}
    G_{\rm cyc}(x_{\sigma(1)},\dots,x_{\sigma(N)})
    =
    \frac{1}{N(N-1)}\sum_{1\le i<j\le N}|x_i-x_j|^2.
\end{equation}
In the equal-weight case \(\lambda_i=1/N\), the barycentric quadratic cost
\eqref{cost} is
\begin{equation}\label{eq:equal-weight-barycentric-complete-graph}
    c(x_1,\dots,x_N)
    =
    \frac{1}{2N^2}\sum_{1\le i<j\le N}|x_i-x_j|^2.
\end{equation}
Thus
\begin{equation}\label{eq:cyclic-to-barycentric-factor}
    \frac1{N!}\sum_{\sigma\in S_N}
    G_{\rm cyc}(x_{\sigma(1)},\dots,x_{\sigma(N)})
    =
    \frac{2N}{N-1}\,c(x_1,\dots,x_N).
\end{equation}
Hence the equal-weight barycentric quadratic cost can be viewed, up to a
constant factor, as the full \(S_N\)-symmetrization of a cyclic quadratic
nearest-neighbour cost.

\begin{remark}[Quotient interpretation]
The cyclic cost \(G_{\rm cyc}\) descends to the quotient \(X^N/C_N\), where
\(C_N=\langle (1\,2\,\cdots\,N)\rangle\). Its full symmetrization descends
further to the unordered quotient \(X^N/S_N\). When \(X\) is a metric space,
\(X^N/S_N\) can be identified with the space of equally weighted empirical
measures endowed with the corresponding Wasserstein matching distance. This
quotient-to-empirical-measure viewpoint appears, for example, in Feng's work on
Hamilton--Jacobi equations for hydrodynamic limits of collective dynamics
\cite{feng2025hydrodynamic}. Proposition \ref{prop:cyclic-symmetrization}
describes the same passage at the level of graph interaction energies: averaging
a cycle over all labellings turns it into a complete graph.
\end{remark}

The preceding observation motivates the following discussion of general
pairwise graph costs, where the complete graph is replaced by an arbitrary
weighted graph.

\subsection{Normal modes for pairwise quadratic interaction costs}\label{subsec:pairwise-quadratic}
The preceding symmetrization result singles out the complete graph as the fully symmetric pairwise model. We now examine what remains of the two-mode mechanism for an arbitrary weighted graph. The conclusion is a normal-mode decomposition of the Kantorovich defect; no general stability theorem is claimed.

Consider the pairwise quadratic interaction costs
\begin{equation}
    c_A(x_1,\dots,x_N)
    :=
    \frac12\sum_{1\le i<j\le N} a_{ij}|x_i-x_j|^2,
    \quad a_{ij}=a_{ji}\ge 0.
\end{equation}
Such costs are common in multi-marginal optimal transport and in models of
interacting systems (see \cite{nenna-pass2022ode,pass2015multi}). In the
factorized complete-graph case they reduce to the cost studied in the main
part of this paper. Indeed, if
\begin{equation*}
    a_{ij}=m_i m_j,
    \quad m_i>0,
    \quad M:=\sum_{i=1}^N m_i,
    \quad \lambda_i:=\frac{m_i}{M},
\end{equation*}
then
\begin{equation}
\begin{aligned}
      c_A(x_1,\dots,x_N)=
    M^2
    \left(
    \frac12\sum_{i=1}^N\lambda_i|x_i|^2
    -
    \frac12\left|\sum_{i=1}^N\lambda_i x_i\right|^2
    \right).
\end{aligned}
\end{equation}
Thus our main results above apply to this class after changing the constants.

For general coefficients \(a_{ij}\), the relevant object for our method is the target--target quadratic form appearing in the Kantorovich defect. More precisely, let \((\varphi^A_1,\dots,\varphi^A_N)\) be a \(c_A\)-conjugate tuple of Kantorovich potentials. Assume that the \(c_A\)-cost multi-marginal problem admits a tuple of multi-marginal optimal transport maps \((T_2,\ldots,T_N)\). Define
\[
R_A(x_1,\dots,x_N)
:=
c_A(x_1,\dots,x_N)-\sum_{i=1}^N\varphi^A_i(x_i).
\]
By the same \(c\)-conjugacy argument as in Proposition \ref{prop:kantorovich-defect-lower}, we have
\begin{equation}
	\begin{aligned}
		R_A(x_1,\dots,x_N)
		\geq	-\sum_{2\le i<j\le N}
		a_{ij}\langle x_i-T_i(x_1),x_j-T_j(x_1)\rangle, \quad \rho\text{-a.e.} \,x_1.
	\end{aligned}
\end{equation}

Define the symmetric target--target matrix $A_I$ by
\[
(A_I)_{ii}:=0, \quad (A_I)_{ij}:=a_{ij}\quad (i\ne j), \quad I:=\{2,\dots,N\}.
\]
Let $A_I e^{k}=\alpha_k e^{k}$, $k=1,\ldots,N-1$, be an orthonormal spectral decomposition. Writing $W_{k,x}:=\sum_{i=2}^N e_i^{k}x_i$ and $W_k(x_1):=\sum_{i=2}^N e_i^{k}T_i(x_1)$, we have
\begin{equation}
	\begin{aligned}
			R_A(x_1,\ldots,x_N)\geq& -\frac{1}{2}\sum_{k=1}^{N-1}
			\alpha_k
			\left|
			\sum_{i=2}^N e_i^{k}\left(x_i-T_i(x_1)\right)
			\right|^2\\
			=&\frac{1}{2}\sum_{\alpha_k<0}
			|\alpha_k|
			|W_{k,x}-W_k(x_1)|^2-
			\frac{1}{2}\sum_{\alpha_k>0}
			\alpha_k
			|W_{k,x}-W_k(x_1)|^2.
	\end{aligned}
\end{equation}

The distinction between the interaction energy and the Kantorovich defect is important here. As a quadratic form, $c_A$ is represented by the graph Laplacian associated with $(a_{ij})$. After the one-variable quadratic terms have been absorbed by the $c_A$-conjugate potentials, however, the defect estimate is governed by the off-diagonal target--target matrix $A_I$. Consequently, connectivity of the graph, or a spectral gap for its Laplacian, does not by itself provide the relative coercivity needed for stability.

Thus the modes with \(\alpha_k<0\) are coercive in the Kantorovich defect,
whereas the modes with \(\alpha_k\ge0\) have to be treated as external
modes. For a general graph these external modes are signed linear
combinations of the multi-marginal optimal transport maps. Therefore the
argument here suggests a normal mode extension of the internal and external
decomposition, but a closed stability theorem for arbitrary graph costs would
require additional stability estimates for these signed modes. We do not
pursue this here.

In particular, for the unweighted complete graph, changing the sign leads to two different problems. The attractive cost is
\begin{equation}
	c_+(x_1,\dots,x_N)
	=
	\frac12\sum_{1\leq i<j\leq N}|x_i-x_j|^2,
\end{equation}
and the repulsive harmonic cost is
\begin{equation}
	c_-(x_1,\dots,x_N)
	=
	-\frac12\sum_{1\leq i<j\leq N}|x_i-x_j|^2.
\end{equation}
For the attractive cost, our main estimate gives positive coercivity in
the internal directions. However, for the repulsive harmonic cost, the sign is reversed; the
same decomposition gives anti-coercivity rather than stability. In
particular, if the constraint \(\sum_{i=1}^N x_i=0\) can be achieved, all
admissible couplings supported on that hyperplane have the same cost. This
is consistent with the non-uniqueness and high-dimensional optimal supports
known for this cost (see \cite{gerolin-kausamo-rajala2019,repulsive-survey}).
Thus the repulsive harmonic cost is not covered by the stability theorem
proved above unless additional structure or regularization is imposed.

\subsection{Hedonic costs and generalized external variables}\label{subsec:hedonic-costs}
Consider the hedonic or multi-agent matching costs
\begin{equation}\label{eq:hedonic-cost}
    c_H(x_1,\dots,x_N)=\inf_{z\in \mathcal{Z}}\sum_{i=1}^N f_i(x_i,z),
\end{equation}
where \(z\) is an auxiliary variable, representing for instance a quality, a contract, a task, or a common matching outcome. Such
costs appear in hedonic models and matching for teams problems (see
\cite{chiappori2010hedonic,carlier-ekeland2010teams,pass2014matching}).

Under suitable compactness and continuity assumptions, the value of the multi-marginal optimal transport problem with cost \eqref{eq:hedonic-cost} agrees with the generalized barycenter problem
\begin{equation}\label{eq:hedonic-barycenter}
	\inf_{\nu\in\mathcal P(\mathcal{Z})}\sum_{i=1}^N T_{f_i}(\mu_i,\nu),
\end{equation}
where \(T_{f_i}\) is the two-marginal optimal transport with cost \(f_i\).

In this sense, the auxiliary variable \(z\) serves as a collective variable for the problem. Hence a stability theorem for \eqref{eq:hedonic-cost} would require two estimates:
an internal coercivity estimate for the Kantorovich defect after eliminating $z$, and a stability estimate for the generalized barycenter in \eqref{eq:hedonic-barycenter}. These estimates are not
consequences of the quadratic proof above. The discussion here only
indicates the analogous decomposition.

\subsection{Failure of relative coercivity: Coulomb and determinant costs}\label{subsec:coulomb-determinant}
Consider the translation-invariant costs, such as the Coulomb interaction
\begin{equation}\label{eq:coulomb-cost}
    c_{C}(x_1,\dots,x_N)=\sum_{1\le i<j\le N}\frac{1}{|x_i-x_j|},
\end{equation}
for which common translations are invisible:
\[
c_C(x_1+\xi,\dots,x_N+\xi)=c_C(x_1,\dots,x_N).
\]
Thus the cost depends only on relative coordinates. However, this
translation-invariant structure is not enough to recover the coercivity used in this paper.

There is a simple obstruction. The Coulomb cost is singular near the
collision set and, in dimensions at least two, its Hessian is indefinite away
from collisions. Hence it does not admit a global quadratic lower bound in
the relative coordinates comparable to our Kantorovich defect estimates.
Existing results establish finiteness, continuity, dual potentials, and separation
of optimal plans from the collision set under non-concentration assumptions
\cite{cotar-friesecke-klueppelberg2013,zbMATH07098220,repulsive-survey}; see
also \cite{arXiv:2509.22494} for a recent dynamical formulation of multi-marginal optimal transport including
translation-invariant costs. These results address structural aspects of the Coulomb problem, but
they do not yield the internal coercivity estimate required by our argument. Thus we do not claim a Coulomb stability result here.

A similar obstruction appears for determinant-type costs (see
\cite{carlier-nazaret2008determinant,pass2012local,repulsive-survey})
\begin{equation}\label{eq:edge-determinant-cost}
	c_{D}(x_1,\dots,x_{n+1})
	=
	\det(x_2-x_1,\dots,x_{n+1}-x_1),
\end{equation}
which is invariant under common translations and depends only on the relative edge vectors. However, the determinant is multilinear and sign-indefinite. Its second-order structure is governed by mixed derivatives rather than by a positive quadratic
internal energy. Therefore, one cannot expect a global coercivity estimate comparable to our Kantorovich defect estimates. A local result may be possible near configurations satisfying a suitable second-variation condition, but this is outside the scope of
the present paper.

The examples above isolate the two possible failure mechanisms of the method. A cost may possess a natural collective variable but fail to be coercive on its fibers, as for Coulomb and determinant interactions; or a collective/normal-mode decomposition may exist while one of the two required estimates remains unavailable, as for general graph interactions and hedonic costs. A broader two-mode theory would therefore require both a uniform lower bound for the Kantorovich defect on the fibers of a finite-dimensional collective coordinate and a stable variational representation of the complementary modes. The barycentric and collective-coordinate costs treated above are precisely cases in which both requirements can be verified.

\bigskip

\addcontentsline{toc}{section}{References}
\def\cprime{$'$}
\providecommand{\bysame}{\leavevmode\hbox to3em{\hrulefill}\thinspace}
\providecommand{\MR}{\relax\ifhmode\unskip\space\fi MR }
\providecommand{\MRhref}[2]{%
  \href{http://www.ams.org/mathscinet-getitem?mr=#1}{#2}
}
\providecommand{\href}[2]{#2}

		\bigskip

\noindent\textbf{Declaration.} The authors declare that they have no conflict of interest and that the manuscript has no associated data.

\medskip

\noindent \textbf{Funding}: This work was supported in part by the National Key R\&D Program of China (2021YFA1000900, 2021YFA1002200), the National Natural Science Foundation of China (12201596), the Shandong Provincial Natural Science Foundation (ZR2025QB05) and Taishan Scholars Program of Shandong Province (tsqn202408059).

\end{document}